\DeclareMathSymbol{\invques}{\mathord}{operators}{`>}
\DeclareRobustCommand{\tmquestiondown}{%
  \ifmmode\invques\else\textquestiondown\fi
}
\numberwithin{equation}{section}
\newcommand{\mylabel}[2]{#2\def\@currentlabel{#2}\label{#1}}
\newtheorem{theorem}{Theorem}[section]
\newtheorem{lemma}[theorem]{Lemma}
\newtheorem{conj}[theorem]{Conjecture}
\newtheorem{proposition}[theorem]{Proposition}
\newtheorem{corollary}[theorem]{Corollary}
\newtheorem{defn}[theorem]{Definition}
\newtheorem{remark}[theorem]{Remark}
\newcommand{\Gal}{\operatorname{Gal}}
\newcommand{\NN}{\mathbb{N}}
\newcommand{\QQ}{\mathbb{Q}}
\newcommand{\Qp}{{\mathbb{Q}_p}}
\newcommand{\Zp}{\mathbb{Z}_p}
\newcommand{\ZZ}{\mathbb{Z}}
\newcommand{\FF}{\mathbb{F}}
\newcommand{\g}{\mathbf{g}}
\newcommand{\ord}{\mathrm{ord}}
\newcommand{\fp}{\mathfrak{p}}
\newcommand{\cL}{\mathcal{L}}
\newcommand{\cO}{\mathcal{O}}
\newcommand{\GL}{\mathrm{GL}}
\newcommand{\cyc}{\textup{cyc}}
\newcommand{\Hom}{\mathrm{Hom}}
\newcommand{\LL}{\Lambda}
\newcommand{\TT}{\mathbb{T}}
\newcommand{\f}{\textup{\bf f}}
\newcommand{\h}{\textup{\bf h}}
\newcommand{\lra}{\longrightarrow}
\newcommand{\res}{\textup{res}}
\newcommand{\etale}{\textup{\'et}}
\newcommand{\p}{\mathfrak{p}}
\newcommand{\m}{\mathfrak{m}}
\newcommand{\cW}{\mathcal{W}}
\newcommand{\cR}{\mathcal{R}}
\newcommand{\rec}{\mathrm{rec}}
\newcommand{\cA}{\mathcal{A}}
\newcommand{\Spf}{{\rm Spf}}
\newcommand{\cl}{{\rm cl}}
\newcommand{\wt}{{\rm wt}}
\newcommand{\Fr}{{\rm Fr}}
\newcommand{\Ad}{{\rm ad}}
\newcommand{\rmw}{{\rm w}}
\newcommand{\CH}{{\rm CH}}
\newcommand{\hatotimes}{{\,\widehat\otimes\,}}
\newcommand{\hf}{\f}
\newcommand{\hg}{\g}
\newcommand{\hh}{\h}
\newcommand{\Q}{\QQ}
\newcommand{\Z}{\ZZ}
\newcommand{\C}{{\mathbb C}}
\definecolor{pinegreen}{rgb}{0.0, 0.47, 0.44}
 \definecolor{pAlgae}{RGB}{87,115,135}
\definecolor{airforceblue}{rgb}{0.36, 0.54, 0.66}
	\definecolor{bondiblue}{rgb}{0.0, 0.58, 0.71}
\definecolor{britishracinggreen}{rgb}{0.0, 0.26, 0.15}
\definecolor{camouflagegreen}{rgb}{0.47, 0.53, 0.42}
\definecolor{darkcyan}{rgb}{0.0, 0.55, 0.55}
\subjclass[2020]{Primary 11F66, 11F67, 11F33; Secondary 11F85, 11G18, 14F30}
\begin{document}

\title{O\lowercase{n the} A\lowercase{rtin formalism for triple product $p$-adic}  $L$-\lowercase{functions}: \\
C\lowercase{how}--H\lowercase{eegner points vs.}  H\lowercase{eegner points}}

\author{K\^az\i m B\"uy\"ukboduk}
\address{K\^az\i m B\"uy\"ukboduk\newline UCD School of Mathematics and Statistics\\ University College Dublin\\ Ireland}
\email{kazim.buyukboduk@ucd.ie}

\author{Daniele Casazza} 
\address{Daniele Casazza\newline UCD School of Mathematics and Statistics\\ University College Dublin\\ Ireland}
\email{daniele.casazza@ucd.ie}

\author{Aprameyo Pal}
\address{Aprameyo Pal \newline Harish-Chandra Research Institute, A CI of Homi Bhabha National Institute, Chhatnag Road, Jhunsi, Prayagraj - 211019
\\ India}
\email{aprameyopal@hri.res.in}

\author{Carlos de Vera-Piquero}
\address{Carlos de Vera-Piquero \newline Universidad de Zaragoza\\
Facultad de Ciencias\\
C. Pedro Cerbuna 12, 50009 Zaragoza \\ Spain}
\email{devera@unizar.es}

\maketitle

\begin{abstract}
    Our main objective in this paper (which is expository for the most part) is to study the necessary steps to prove a factorization formula for a certain triple product $p$-adic $L$-function guided by the Artin formalism. The key ingredients are 
    \begin{itemize}
     \item[a)] the explicit reciprocity laws governing the relationship of diagonal cycles and generalized Heegner cycles to $p$-adic $L$-functions;
    \item[b)] a careful comparison of  Chow--Heegner points and twisted Heegner points in  Hida families, via formulae of Gross--Zagier type.
\end{itemize}
\end{abstract}

\section{Introduction}

The purpose of the present article (which is largely a survey) is to partially execute the strategy outlined in \cite[\S2.5]{BCS} to study the $p$-adic Artin formalism for a certain triple product $p$-adic  $L$-function (where $p$ is an odd prime that we fix forever), and in that sense, it should be thought of as a continuation of this work. We will therefore use the conventions and notation of op. cit. whenever we can, indicating where these objects were defined in \cite{BCS}. 

To be able to spell out our goals in more precise wording, let $\f, \g$ be a pair of Hida families (cf. \S2.1 in op. cit. for the hypotheses on these families) over the respective weight spaces $\cW_\f$ and $\cW_\g$, and let $\g^c$ denote the Hida family that is conjugate to $\g$ (cf. \S2.1.7 in op. cit.). We put $\cW_3:=\cW_\f\times \cW_\g\times \cW_{\g}$ (which we think of as the weight space for the 3-parameter family $\f\otimes\g\otimes \g^c$ of triple products of cusp forms) and $\cW_2:=\cW_\f\times \cW_\g$. Note that we have a natural injection $\cW_2 \stackrel{\iota_{2,3}}{\hookrightarrow} \cW_3$  (cf. \S2.1.8 in op. cit.) given by $(\kappa,\lambda)\mapsto (\kappa,\lambda,\lambda)$. 

We will explain in this article that a stronger form of the results of \cite{YZZ10,YZZ12,YZZ23} (the proof of the Gross--Kudla conjecture for the central derivatives of triple product $L$-functions) would imply that the $\g$-dominant triple product $p$-adic  $L$-function $\cL_p^\hg(\hf\otimes\hg\otimes\hg^c)^2_{\vert_{\cW_2}}$ (cf. Theorem 3.4 in \cite{BCS} for its definition) factors in accord with the Artin formalism. Note that the  $p$-adic  $L$-function $\cL_p^\hg(\hf\otimes\hg\otimes\hg^c)^2_{\vert_{\cW_2}}$ has empty range of interpolation. Therefore, the factorization predicted by Conjecture~\ref{conj_main_6_plus_2} below, formulated in \cite{BCS} as Conjecture 2.2, does not follow directly from the Artin formalism for complex  $L$-series via the interpolation properties of $p$-adic $L$-functions.

\begin{conj}
\label{conj_main_6_plus_2}
Suppose that $\varepsilon(\hf)=-1=\varepsilon^{\rm bal}(\hf\otimes\g\otimes\g^c)$. We then have the following factorization of $p$-adic $L$-functions:
\begin{equation}
\label{eqn_conj_main_6_plus_2}
     \cL_p^\hg(\hf\otimes\hg\otimes\hg^c)^2_{\vert_{\cW_2}}= \mathscr C\cdot \cL_p^\Ad(\hf\otimes \Ad^0\hg) \cdot {\rm Log}_{\omega_\f}({\rm BK}_{\f}^\dagger)\,,
\end{equation}
        where $\mathscr C\in {\rm Frac}(\cR)$ is a meromorphic function in 2 variables with an explicit algebraicity property at crystalline specializations $(\kappa,\lambda)$ (cf. Theorem~8.11 in op. cit.).
\end{conj}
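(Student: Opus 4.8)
\medskip

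\noindent\textbf{A strategy towards the proof of Conjecture~\ref{conj_main_6_plus_2}.}
Since $\cL_p^\hg(\hf\otimes\hg\otimes\hg^c)^2_{\vert_{\cW_2}}$ has empty range of interpolation, \eqref{eqn_conj_main_6_plus_2} cannot be read off from the classical Artin formalism via interpolation, and must instead be established on the Euler-system side. The plan is to express each side as the image of a canonical cohomology class under a Perrin--Riou-style big logarithm, and then to match the two classes by feeding the Artin decomposition of the generalized Gross--Kudla--Schoen diagonal cycle into the explicit reciprocity laws of ingredient (a). First I would invoke the reciprocity law for the three-variable family of diagonal cycles attached to $\f\otimes\g\otimes\g^c$ in the $\g$-dominant range (the range in which $\cL_p^\hg(\hf\otimes\hg\otimes\hg^c)$ is defined, cf.\ Theorem~3.4 of \cite{BCS}): it should identify $\cL_p^\hg(\hf\otimes\hg\otimes\hg^c)^2$, up to an explicit unit, with the image under a big logarithm of the diagonal cycle class $\kappa(\hf,\hg,\hg^c)$ restricted to $\cW_2$ along $\iota_{2,3}$, the exponent~$2$ reflecting the self-duality forced by $\hg^c$ being conjugate to $\hg$.

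Next I would carry out the Artin decomposition at the level of the cycle. Over the diagonal there is a canonical isomorphism $V_\g\otimes V_{\g^c}\cong \mathds{1}\oplus \Ad^0 V_\g$, up to a Tate twist, and the de Rham datum against which the big logarithm is computed decomposes accordingly; this splits the logarithm of the restricted cycle class as a product of an $\Ad^0$-isotypic and an $\f$-isotypic contribution. Since $\varepsilon^{\rm bal}(\hf\otimes\g\otimes\g^c)=-1=\varepsilon(\hf)$, the factorization of $\varepsilon$-factors gives $\varepsilon(\hf\otimes\Ad^0\hg)=+1$: the $\Ad^0$-component therefore lies in the range where its $p$-adic $L$-function genuinely interpolates central $L$-values, and the reciprocity law that defines $\cL_p^\Ad(\hf\otimes\Ad^0\hg)$ identifies this contribution with $\cL_p^\Ad(\hf\otimes\Ad^0\hg)$ directly, with no derivative taken. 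The $\f$-isotypic component, by contrast, is by its very construction a Chow--Heegner class in the Hida family of $\f$; as $\varepsilon(\hf)=-1$, this is the ``derivative-type'' object that must be matched with ${\rm Log}_{\omega_\f}({\rm BK}_{\f}^\dagger)$.

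The heart of the proof is this last matching, and it is where ingredient (b) is indispensable. After fixing an auxiliary imaginary quadratic field adapted to the levels, I would realize the Chow--Heegner class through a correspondence on the product of a Kuga--Sato variety with a modular (or Shimura) curve, use the reciprocity law for \emph{generalized Heegner cycles} together with a $p$-adic Gross--Zagier formula of Bertolini--Darmon--Prasanna type to rewrite its pushforward in terms of a twisted Heegner point, and then invoke a \emph{strengthened form} of the Yuan--Zhang--Zhang proof of the Gross--Kudla conjecture --- one that isolates the precise transcendental period and its variation in the weight --- to convert the resulting central-derivative formula into the sought identification of the Chow--Heegner class with ${\rm BK}_{\f}^\dagger$ up to a meromorphic multiple. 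All the accumulated discrepancies (Petersson norms, Gross--Zagier periods, Euler-factor corrections at the ramified and archimedean places) are gathered into $\mathscr C\in\Frac(\cR)$; its asserted algebraicity at a crystalline specialization $(\kappa,\lambda)$ is then obtained by tracking these periods through the previous steps and comparing with Theorem~8.11 of \cite{BCS}.

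The principal obstacle is the geometric input: \cite{YZZ10,YZZ12,YZZ23} compute the height of the Gross--Kudla cycle in terms of $L'(1/2)$, but not with the normalization --- the exact transcendental period and its variation in the weight --- needed to align the $p$-adic regulator of the Chow--Heegner class with ${\rm Log}_{\omega_\f}({\rm BK}_{\f}^\dagger)$; producing such a refinement, compatibly across the Hida family of $\f$, is the main task. A secondary difficulty is to verify that the explicit reciprocity laws of ingredient (a) are available in the precise two-parameter, $\g$-dominant range in question, rather than only after specialization of the weights.
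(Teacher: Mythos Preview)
Your overall architecture --- use the explicit reciprocity law for diagonal cycles to express the left-hand side as a big logarithm, then pass to a Chow--Heegner class and compare with Heegner points via Gross--Zagier-type formulae --- matches the paper's. But there is a genuine gap in your second paragraph, where you claim that the Artin splitting $V_\g\otimes V_{\g^c}\cong \mathds{1}\oplus \Ad^0 V_\g$ ``splits the logarithm of the restricted cycle class as a \emph{product} of an $\Ad^0$-isotypic and an $\f$-isotypic contribution''. It does not. The big logarithm ${\rm Log}^{(\g),\dagger}_{\omega^{(\g)}}$ pairs against $\omega_\f\otimes\eta_\g\otimes\omega_{\g^c}$, and by the commutative diagram~\eqref{eqn_2023_09_27_1233} this equals ${\rm Log}_{\omega_\f}^\dagger$ applied to the image of the cycle under ${\rm id}\otimes{\rm tr}$. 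In other words, only the \emph{trivial} ($\f$-isotypic) component $\Delta^{(\rm tr)}$ of the diagonal cycle contributes to the logarithm; the $\Ad^0$-component is annihilated at this stage. So your proposed mechanism cannot produce the factor $\cL_p^\Ad(\hf\otimes\Ad^0\hg)$. Relatedly, there is no ``reciprocity law that defines $\cL_p^\Ad(\hf\otimes\Ad^0\hg)$''; in the paper that $p$-adic $L$-function is assumed to exist (item~\eqref{item_deg6}) and is used only via its \emph{interpolation property} at classical points.

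The paper's route to the product structure is quite different and takes place entirely at weight-$2$ specializations. One identifies $\Delta^{(\rm tr)}$ at $(\kappa,\lambda)\in\cA^{(2)}_r$ with (a scalar multiple of) the Chow--Heegner point $P_r^\circ(\kappa,\lambda)$ in $J_r$ (Corollary~\ref{cor_prop_main_comparison_Sec5}). The assumed strengthened Gross--Kudla formula (Conjecture~\ref{conj_5_2_2024_02_02}) gives $\langle P_r^\circ,P_r^\circ\rangle_{\rm NT}$ in terms of $\LL'(f\otimes g\otimes g^c,2)$, and it is the \emph{complex} Artin formalism, applied to this derivative, that produces the factor $\LL(f\otimes\Ad^0 g,1)\cdot \LL'(f,1)$. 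Howard's twisted Gross--Zagier formula then expresses $\LL'(f,1)$ via $\langle Q_\kappa,Q_\kappa\rangle_{\rm NT}$. Since both $P_r^\circ(\kappa,\lambda)$ and $Q_\kappa$ land in a Selmer group of rank one (this uses Howard's work on big Heegner points), the equality of N\'eron--Tate heights up to the explicit constant forces \eqref{eqn_prop_3rd_reduction_step} up to torsion, and taking $\log_{\omega_{\f_\kappa}}$ of both sides yields the specialized identity \eqref{eqn_2023_10_03_1202}. The density of such $(\kappa,\lambda)$ in $\cW_2$ then gives the full statement. Your third paragraph is close to this in spirit, but the $\cL_p^\Ad$-factor emerges from the complex side via ${\rm GK}^+$, not from a $p$-adic reciprocity law for an $\Ad^0$-component of the cycle.
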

In the statement of Conjecture~\ref{conj_main_6_plus_2}:
\begin{itemize}
    \item ${\rm Log}_{\omega_\f}({\rm BK}_\f^\dagger)$ denotes the logarithm of the big Beilinson--Kato class (constructed by Ochiai); cf. \cite[\S6.1.2]{BCS}. We refer the reader to \S7.2.9 of op. cit. for a justification (in view of the Artin formalism) of the presence of this factor in \eqref{eqn_conj_main_6_plus_2} as an avatar of $L$-values.
    \item $\varepsilon(\hf)$ is the common global root number of the family $\f$, whereas $\varepsilon^{\rm bal}(\hf\otimes\g\otimes\g^c)$ is the same for the family $\hf\otimes\g\otimes\g^c$ at those weights $(\kappa,\lambda,\mu)\in \cW_3$ that are \emph{balanced} (cf. \S2.1.5 and \S2.2.3 of op. cit.).
    \item We implicitly assume throughout this article the existence of the $p$-adic $L$-function $\cL_p^\Ad(\hf\otimes \Ad^0\hg)$, which is characterized by the interpolation properties described in \cite[Conjecture 3.6]{BCS} (which is concurrent with the general Coates--Perrin-Riou formalism).
\end{itemize}

Conjecture~\ref{conj_main_6_plus_2} was proved in \cite[\S8]{BCS} under the additional hypothesis that the family $\g$ has CM. We refer the reader to the extensive discussion in \S2.2.6 and \S2.2.7 in op. cit. for the motivation behind this conjecture. 

\begin{remark}
     We would like to underscore the comparison of Conjecture~\ref{conj_main_6_plus_2} with and its divergence from earlier work with similar flavour; e.g. that of \cite{Gross1980Factorization, Dasgupta2016}. 
     
     As our strategy to attempt Conjecture~\ref{conj_main_6_plus_2} will make it clear, the present factorization problem at hand amounts to a comparison of algebraic cycles in families, that explain the vanishing of central critical values of $L$-functions due to sign reasons. In contrast, in  \cite{Gross1980Factorization, Dasgupta2016}, the factorization problem is governed by a comparison of elements in the motivic cohomology (e.g. elliptic units vs cyclotomic units in the former, Beilinson--Flach elements vs cyclotomic units in the latter) that explain the vanishing of $L$-values at non-critical points due to $\Gamma$-factors. Hand-in-hand with this, where we need a comparison of height pairings, Gross and Dasgupta rely on a comparison of regulators.
     
     We refer to \cite[\S2.4]{BCS} for a detailed discussion on this topic, including the differences (in comparison to \cite{palvannan_factorization}) in the factorization of algebraic $p$-adic $L$-functions, and from the perspective of Perrin-Riou's theory of $p$-adic $L$-functions.
\end{remark}

Our goal in this paper is to explain that this conjecture can be proved as soon as the following two ingredients become available:
\begin{itemize}
    \item[(\mylabel{item_deg6}{\bf $L_p^{\rm ad}$})]  The construction of the $p$-adic $L$-function $\cL_p^\Ad(\hf\otimes \Ad^0\hg)$ with the expected interpolative properties (that are recorded as \cite[Conjecture 3.6]{BCS}; see also Lemma~\ref{lemma_first_reduction} below). 
    \item[(\mylabel{item_GKS}{\bf ${\rm GK}^+$})] An extension of a conjecture of Gross and Kudla\footnote{We remark that this conjecture has been settled by Yuan--Zhang--Zhang in \cite{YZZ10, YZZ12, YZZ23} in many cases, which unfortunately does not cover the level of generality required in the present work.} expressing the central critical derivatives of certain triple product $L$-functions in terms of the Beilinson--Bloch heights of Gross--Kudla--Schoen cycles (recorded as Conjecture~\ref{conj:GK-delta} below). 
\end{itemize}
\subsection{Set-up}
\label{subsec_the_set_up_intro}
As we have noted above, we shall closely follow the notation and conventions of \cite[\S2.1]{BCS}. We include in this subsection a review of some of those that play a key role in our paper.

\subsubsection{}
\label{subsubsec_2023_09_26_1144}
Fix forever a prime $p{>2}$. Let us fix an algebraic closure $\overline{\QQ}$ of $\QQ$ and fix embeddings $\iota_\infty: \overline{\QQ} \hookrightarrow \mathbb{C}$ and $\iota_p: \overline{\QQ} \hookrightarrow \mathbb{C}_p$ as well as an isomorphism $j:\mathbb{C}\stackrel{\sim}{\longrightarrow} \mathbb{C}_p$ in a way that the diagram
$$\xymatrix@R=.1cm{&\mathbb{C}\ar[dd]^{j}\\
\overline{\QQ}\ar[ur]^{\iota_\infty}\ar[rd]_{\iota_p} &\\
&\mathbb{C}_p
}$$
commutes. 

\subsubsection{} For a field $F$, let us fix a separable closure $\overline{F}$ of $F$ and denote by $G_F:=\Gal(\overline{F}/F)$ its absolute Galois group. If $F\subset F' \subset \overline{F}$ is a finite subextension, we denote by $\res_{F'/F}$ the restriction map
$$H^\bullet(F,\,\cdot\,)=H^\bullet(G_F,\,\cdot\,)\xrightarrow{\res_{F'/F}} H^\bullet(G_{F'},\,\cdot\,)=H^\bullet(F',\,\cdot\,)$$
on Galois cohomology induced from the inclusion $G_{F'}\subset G_F$. 

\subsubsection{} For an abelian group $G$, let us denote by $\LL(G):=\ZZ_p[[G]]$ its $p$-adically complete group ring.
\subsubsection{Hida families}
\label{subsubsec_2022_05_16_1506}
Let $p$ be an odd prime and let $\cO$ be the ring of integers of a finite extension $E$ of $\Qp$. Let  $\chi_\cyc: G_{\QQ}\to \ZZ_p^\times$ (resp. $\omega$) denote the $p$-adic cyclotomic (resp. Teichm\"uller) character. Let us put $\langle \chi_\cyc\rangle:=\omega^{-1}\chi_\cyc$ and note that $\langle \chi_\cyc\rangle$ takes values in $1+p\ZZ_p$.

We put $\LL_{\rm wt}:=\LL(\ZZ_p^\times)$ and denote by $\ZZ_p^\times \xrightarrow{[\,\cdot\,]} \LL_{\rm wt}^\times$ the natural injection. We let $\bbchi$ denote the universal weight character:
\[
    \bbchi: G_\QQ\stackrel{\chi_\cyc}{\lra}\ZZ_p^\times\hookrightarrow \LL_{\rm wt}^\times\,.
\]
An arithmetic specialization of weight $k\in \ZZ$ is a ring homomorphism 
\[
    \nu:\quad \LL_{\rm wt} \lra \cO
\]
such that the map $G_\QQ\stackrel{\bbchi}{\lra} \LL_{\rm wt}^\times\stackrel{\nu}{\lra}\cO$ agrees with $\chi_\cyc^{k}$ (for some natural number $k$) on an open subgroup of $G_\QQ$. In more precise terms, we have $\nu\circ \bbchi=\omega^j\psi_\nu\langle\chi_\cyc\rangle^{k}$, and $\psi_\nu$ (called the wild character of $\nu$) is a Dirichlet character of $p$-power order and $p$-power conductor.

We let $\hh=\sum_{n=1}^{\infty} \mathbb{a}_{n}(\hh)q^n \in \cR_\hh[[q]]$ denote the branch of the primitive Hida family of tame conductor $N_\hh$ and nebentype character $\varepsilon_\hh$ (which is a Dirichlet character modulo $N_\hh p$). Here, $\cR_\hh$ is the branch (i.e. the irreducible component) of Hida's universal ordinary Hecke algebra (cf. \cite{EPW2006}, \S2.7).  Let us write $\varepsilon_\hh=\varepsilon_\hh^{(\rm t)}\varepsilon_\hh^{(p)}$, where $\varepsilon_\hh^{(\rm t)}$ (resp. $\varepsilon_\hh^{(p)}$) is a Dirichlet character modulo $N_\h$ (resp. $p$).

We have $a_p(\hh) \in \mathcal \cR_\hh^\times$, and $\bbchi$ gives rise to the character
\[
    \bbchi_{\hh}:  G_\QQ\xrightarrow{\bbchi} \LL_{\rm wt}^\times \lra \cR_\hh^\times\,.
\]
 For any $\kappa\in \cW_\hh :=\Spf(\cR_\hh)(\C_p)$, let us write ${\rm wt}(\kappa)\in \Spf(\LL_{\rm wt})(\C_p)$ for the point that $\kappa$ lies over and call it the weight character of $\kappa$, so that ${\rm wt}(\kappa)\circ \bbchi =\kappa\circ \bbchi_{\hh}$. We say that $\kappa$ is classical if ${\rm wt}(\kappa)$ is an arithmetic specialization in the above sense. 

We call $\cW_\hh$ the weight space for the Hida family $\hh$. Let us denote by $\cW_\hh^{\rm cl}\subset \cW_\hh$ the set of classical specializations.  For $\kappa\in \cW_\hh^{\rm cl}$,  we let $\fp_\kappa \subset \cR_\hh$ denote the corresponding prime ideal that comes attached to $\kappa$ (cf. \cite{dJ95}, \S7.1.9--\S7.1.10). We put $F_\kappa:=(\cR_\hh)_{\p_\kappa}/\p_\kappa(\cR_\hh)_{\p_\kappa}$, which is a finite extension of $\QQ_p$, and denote its ring integers by $\cO_\kappa$. Then 
${\rm wt}(\kappa)\circ \bbchi =\kappa\circ \bbchi_{\hh}$ takes values in $F_\kappa^\times$, and according to the discussion in the preceding paragraph, it has the form 
\begin{equation}
    \label{eqn_2023_09_26_1021}
    {\rm wt}(\kappa)\circ \bbchi = \omega^{j}\psi_\kappa\chi_\cyc^k\,,\quad j,k\in \NN
\end{equation}
where $k_\circ:=j+k\pmod{p-1}$ is independent of $\kappa$, and where $\psi_\kappa$ is a Dirichlet character modulo $p^{r(\kappa)}$ of $p$-power order and $p$-power conductor $p^{s_\kappa}$. By slight abuse, we denote also by ${\rm wt}(\kappa)$ the positive integer $k$ given as in \eqref{eqn_2023_09_26_1021}. We call $\psi_\kappa$ the wild character of $\kappa$. The specialization 
$$\hh_\kappa:=\sum_{n=1}^{\infty} \kappa(\mathbb{a}_{n}(\hh))q^n \in F_\kappa[[q]]$$ 
is a $p$-stabilized cuspidal eigenform of weight ${\rm wt}(\kappa)+2$,  level $\Gamma_1(Np^{s_\kappa})$ and nebentype\footnote{In the main body of our article, we shall take $k_\circ=0$ for the Hida families $\f$ and $\g$ we consider below. Therefore, the branches of all Hida families that appear in our work are supported over the connected component of the weight space ${\rm Spf}(\LL_{\rm wt})$ that is centered at a point of weight $0$.} $\varepsilon_\hh\psi_\kappa\omega^{k_\circ-{\rm wt}(\kappa)}$.

Hida has attached a Galois representation
\[
    \rho_\hh : G_{\Q,\Sigma} \lra \GL_2(V_\hh)
\]
to $\hh$, where $\Sigma$ is a finite set of primes containing all those dividing $pN_\hh\infty$ and $V_\hh$ is a 2-dimension $\mathrm{Frac}(\cR_\hh)$-vector space. The Galois representation $\rho_\hf$ is characterized by the property that
\[
    \mathrm{Tr}\, \rho_\hh(\Fr_\ell) = a_\ell(\hh), \qquad  \ell\not\in\Sigma.
\]
We denote by $T_\hh\subset V_\h$ the Ohta lattice (cf. \cite{ohta99,ohta00}, see also \cite{KLZ2} where our $T_\hh$ corresponds to $M(\hh)^*$ in op. cit.) that realizes $\rho_\hh$ in \'etale cohomology groups of a tower of modular curves. Under the hypothesis that
    \begin{itemize}
       \item[\mylabel{item_Irr}{\bf (Irr)}]  the residual representation $\bar{\rho}_\hh$ is absolutely irreducible
    \end{itemize}
(which we assume throughout for all the Hida families that appear in our work), it follows that any $G_\QQ$-stable lattice in $ V_\hh$ is homothetic to $T_\hh$. If $\kappa\in \cW_\hh$, then $V_{\hh_\kappa}:=T_\hh\otimes_{\kappa} F_\kappa$ is Deligne's representation associated to the cuspidal eigenform $\hh_\kappa$.

Thanks to Wiles, we have
\[
    {\rho_\hh}_{|_{G_{\Qp}}} \simeq \begin{pmatrix} \delta_\hh & * \\ 0 & \alpha_\hf\end{pmatrix}, 
\]
where $\alpha_\hh: G_{\QQ_p}\to \cR_\hh^\times$ is the unramified character given by $\alpha_\hh(\Fr_p)=a_p(\hh)$ and 
$\delta_\hh:=\bbchi_{\hh}\, \chi_\cyc\, \alpha_\hh^{-1}\,\varepsilon_\hh$. Whenever
    \begin{itemize}
         \item[\mylabel{item_Dist}{\bf (Dist)}] $\delta_\hh \not\equiv \alpha_\hh \mod \mathfrak{m}_{\cR_\hh}$
    \end{itemize}
    (we assume the validity of this condition throughout this paper for all the Hida families that appear in our work), the lattice $T_\hh$ fits in an exact sequence
\begin{equation} \label{eqn:filtrationf}
    0\lra T_\hh^+ \lra T_\hh \lra T_\hh^- \lra 0 
\end{equation} 
of $\cR_\hh[[G_{\Qp}]]$-modules, where the action on $ T_\hh^+$ (resp. $ T_\hh^-$) is given by $\delta_\hh$ (resp. $\alpha_\hh$). 

\subsubsection{Self-dual triple products}
\label{subsubsec_211_2022_06_01_1635}
Let $\f$ and $\g$ be primitive Hida families of ordinary $p$-stabilized newforms of tame levels $N_\f$ and  $N_\g$ (as in \S\ref{subsubsec_2022_05_16_1506}), where the tame nebentype $\varepsilon_\f$ of the family $\f$ is the trivial character $\mathds{1}$ modulo $N_\f$, and $\varepsilon_\g=\varepsilon_\g^{\rm (t)}$ as required in \cite[\S5]{BSV}. We require that $N_\f$ is square-free, which is a strengthening of the condition (sf) in \cite{Hsieh}. 

Let us set $N:={\rm LCM}(N_\f, N_\g)$ and put $T := T_\f\,\widehat\otimes\,_{\ZZ_p} {\rm ad}(T_\g)$. Then $T$ is a Galois representation of rank $8$ over the complete local Noetherian ring
\[
    \cR := \cR_\f \,\widehat\otimes\,_{\ZZ_p} \cR_\g\,.
\]
Since $\varepsilon_\f=\mathds{1}$ and ${\rm ad}(T_\g)$ is self-dual, we have a perfect $G_\QQ$-equivariant Poincar\'e duality pairing (see \S\ref{subsubsec_114_2023_09_27_1616} and \S\ref{subsubsec_118_2024_07_02_1311}, where we employ the discussion therein with $\h=\f=\h^c$, and with $\h=\g$) 
\[
   T\otimes T\lra \bbchi_{\f}\chi_\cyc\,.
\]
Since $p>2$ by assumption, there exists a unique character $\bbchi_{\f}^{\frac{1}{2}}:G_\Q\to \cR_\f^{\times}\hookrightarrow \cR^\times$ with $(\bbchi_{\f}^{\frac{1}{2}})^2=\bbchi_{\f}$. Then the Galois representations
\[
  T_\f^\dagger:=T_\f\otimes\bbchi_{\f}^{-\frac{1}{2}}\,,\quad   T^\dagger := T\otimes \bbchi_{\f}^{-\frac{1}{2}}= T_\f^\dagger\widehat\otimes {\rm ad}(T_\g)
\]
are self-dual, in the sense that $T_\f^\dagger\simeq \Hom_{\cR_\f}(T^\dagger_\f, \cR_\f)(1)$, and $T^\dagger \simeq \Hom_{\cR}(T^\dagger, \cR)(1)$ as $G_\QQ$-representations.

\subsubsection{} The Galois representation $M:=T_\f^\dagger\widehat\otimes_{\ZZ_p}{\rm ad}^0(T_\g)$ is a free of rank-6  self-dual direct summand of $T^\dagger$. We view it as a submodule of $T$ naturally, considering ${\rm ad}^0(T_\g)$ as the kernel of the $G_\QQ$-equivariant trace map ${\rm ad}(T_\g)\xrightarrow{\rm tr} R_\g$, where the target $R_\g$ is endowed with the trivial Galois action.

\subsubsection{Root numbers} 
\label{subsubsec_root_numbers_2024_02_09_1652}
For any $\kappa\in \cW_\f$ and $\lambda\in \cW_\g$, let us denote by $F_{\kappa,\lambda}$ the field generated by $F_\kappa$ and $F_\lambda$, and let $\cO_{\kappa,\lambda}$ denote its ring of integers. We set $T_{\kappa,\lambda}:=T \otimes_{\cR,\kappa\otimes\lambda} \cO_{\kappa,\lambda}$\,, and similarly define $T_{\kappa,\lambda}^\dagger$. We require that we have for some (equivalently, every) classical point $(\kappa,\lambda)\in \cW_2$
$$\varepsilon({\rm WD}_\ell(T_{\kappa,\lambda}^\dagger))=+1$$ 
for the local root number at every $\ell \mid N_\f N_\g$. Note that this shows that the global root number $\varepsilon(T_{\kappa,\lambda}^\dagger)$ of $T_{\kappa,\lambda}^\dagger$ (which is given as the product of local root numbers, including the one at the archimedean place) for such $(\kappa,\lambda)$ equals $-1$ (resp. $+1$) if ${\rm wt}(\kappa)\leq 2{\rm wt}(\lambda)+1$ (resp. if ${\rm wt}(\kappa)> 2{\rm wt}(\lambda)+1$). We refer the reader to \cite[\S1.2]{Hsieh} for a detailed discussion on the local and global root numbers in the setting of the present paper. We remark that our hypotheses on the local root numbers are required for the construction of the unbalanced triple product $p$-adic $L$-functions, cf. Assumption (1) for \cite[Theorem A]{Hsieh}.

Throughout this paper, we also assume that the global root number $\varepsilon(T_{\f_\kappa}^\dagger)$ of $T_{\f_\kappa}^\dagger$ equals $-1$ for some (equivalently, every) $\kappa\in \cW_\f^{\rm cl}$.

\subsubsection{}
\label{subsubsec_114_2023_09_27_1616}
Let us denote by $\h^c:=\h\otimes \varepsilon_\h^{-1}$ the conjugate family. We recall that we have $\varepsilon_\h=\varepsilon_\h^{(t)}$ by assumption. As remarked in \cite[Lemma 3.4]{LoefflerCMB}, the Hida family $\h^c$ is also primitive of level $N_\h$. We identify the weight space $\cW_{\h^c}$ with $\cW_\h$. We have a perfect $G_\QQ$-equivariant Poincar\'e duality pairing
\begin{equation}
    \label{eqn_2023_09_26_1114}
    T_\h \otimes_{\cR_\h} T_{\h^c}\xrightarrow{\langle\,,\,\rangle_\h} \bbchi_\h\chi_\cyc\,,
    \end{equation}

Since the precise definition of this pairing is important for our eventual purposes (e.g. for the deduction of the commutative diagram \eqref{eqn_2023_09_27_1038} below), we briefly review its basic properties. Our discussion relies greatly on the exposition in \cite[\S7]{KLZ2}. 

We first recall the construction of $T_\h$. Let us put 
$$H^1_{\rm ord}(Y_1(N_\h p^\infty)):=e_{\rm ord}' \varprojlim_r H^1_{\textup{\'et}}(Y_1(N_\h p^r)_{\overline{\QQ}},\ZZ_p(1))\,,$$
where $e_{\rm ord}':=\lim_n (U_p')^{n!}$ is Hida's ordinary projector associated to $U_p'$. Let $\mathbb T_{N_\h p^\infty}$ denote the Hecke algebra acting on $H^1_{\rm ord}(Y_1(N_\h p^\infty))$, generated by $\{T_\ell': \ell\nmid N_\h p\}$, $\{U_\ell': \ell\mid N_\h p\}$, and the diamond operators $\langle d \rangle$ for integers $d$ coprime to $N_\h p$. The ring $\mathbb T_{N_\h p^\infty}$ is semi-local, and $\h$ determines a unique maximal ideal $\m_\h$ of $\mathbb T_{N_\h p^\infty}$. The localization $H^1_{\rm ord}(Y_1(N_\h p^\infty))_{\m_\h}$ at this maximal ideal is denoted by $M(\h)^*$ in \cite{KLZ2}. The local algebra $\mathbb T_{N_\h p^\infty,\m_\h}$ has finitely many minimal primes, and $\h$ corresponds exactly to one of these (which we denote by $\mathfrak{a}_\h$). We let $\cR_\h$ denote the normalization of the integral domain $\mathbb T_{Np^\infty,\m_\h}/\mathfrak{a}_\h$ and put 
\begin{equation}
\label{eqn_2023_11_13_1642}
    T_\h:=H^1_{\rm ord}(Y_1(N_\h p^\infty))_{\m_\h}\otimes_{\mathbb T_{N_\h p^\infty,\m_\h}}\cR_\h\,.
\end{equation}

The $\mathbb T_{N_\h p^\infty}$-module $H^1_{\rm ord}(Y_1(N_\h p^\infty))$ interpolates ordinary modular forms with tame level $N_\h$ in the following sense: $H^1_{\rm ord}(Y_1(N_\h p^\infty))$ comes equipped with the diamond action of $\LL^{\diamondsuit}_{N_\h  p^r}:=\LL(\ZZ_p^\times\times (\ZZ/N_\h\ZZ)^\times)$; we let $[z]$ denote the diamond operator corresponding to $z\in \ZZ_p^\times\times (\ZZ/N_\h\ZZ)^\times$. Then for any natural number $k$ and positive integer $r$, we have an isomorphism
$$H^1_{\rm ord}(Y_1(N_\h p^\infty))/I_{k,r}\xrightarrow{\,\,\sim\,\,} e_{\rm ord}'H^1_{\textup{\' et}}(Y_1(N_\h p^r), {\rm TSym}^{k}(\mathscr{H}_{\ZZ_p})(1))=: {\mathscr T}_{\rm ord}(N_\h p^r,k)\,,$$
where 
\begin{itemize}
\item $I_{k,r}\subset \LL^{\diamondsuit}_{N_\h  p^r}$ is the ideal generated by $[1+p^r]-(1+p^r)^k$, 
\item $\mathscr{H}_{\ZZ_p}$ is the \'etale sheaf on $Y_1(N_\h p^r)$ given as in \cite[\S2.3]{KLZ2};
\end{itemize}
 The Hecke module ${\mathscr T}_{\rm ord}(N_\h p^r,k)$ can be identified with (2 copies of) the space of $p$-ordinary modular forms of level $N_\h p^r$ and weight $k+2$ thanks to the Eichler--Shimura isomorphism, and comes equipped with the following Galois-equivariant perfect pairing:
 \begin{equation}
     \label{eqn_2023_09_29_1444}
  \langle\,,\,\rangle_{k,r}\,:\quad   
  {\mathscr T}_{\rm ord}(N_\h p^r,k)\otimes {\mathscr T}_{\rm ord}(N_\h p^r,k)\xrightarrow{x\otimes y \mapsto \langle x, W_{N_\h p^r}^{-1}(U_p')^ry \rangle} \ZZ_p[\Delta_{N_\h p^r}]\otimes\chi_\cyc^{k+1}\,,
 \end{equation}
where $\Delta_{m}=\Gal(\QQ(\mu_{m})/\QQ)\simeq (\ZZ/m\ZZ)^\times$, and $\langle\,,\,\rangle$ is the Poincar\'e duality pairing.

Let us put 
$$T_{\h}[k,r]:=T_\h\otimes_{\LL^{\diamondsuit}_{N_\h  p^r}}\LL^{\diamondsuit}_{N_\h  p^r}/I_{k,r}={\mathscr T}_{\rm ord}(N_\h p^r,k)\otimes_{\mathbb T_{N_\h p^\infty,\m_\h}}\cR_\h\,.$$
Then the pairing \eqref{eqn_2023_09_29_1444} gives rise to the perfect pairing
$$T_{\h}[k,r] \otimes T_{\h}[k,r] \xrightarrow{\langle\,,\,\rangle_{k,r}} \ZZ_p[\Delta_{p^r}]\otimes \varepsilon_\h\chi_\cyc^{k+1}\,,$$
which gives rise to the pairing
\begin{equation}
    \label{eqn_2023_09_29_1518}
    T_{\h}[k,r] \otimes T_{\h^c}[k,r]=T_{\h}[k,r] \otimes (T_{\h}[k,r] \otimes\varepsilon_\h^{-1}) \lra \ZZ_p[\Delta_{p^r}]\otimes\chi_\cyc^{k+1}\,.
\end{equation}
The pairings \eqref{eqn_2023_09_29_1518} are interpolated by \eqref{eqn_2023_09_26_1114} as $k$ and $r$ vary.

\subsubsection{}
\label{subsubsec_118_2024_07_02_1311}
We further elaborate on the modified Poincar\'e duality pairing \eqref{eqn_2023_09_29_1444}. In view of the perfectness of the usual  Poincar\'e duality pairing
$$e_{\rm ord}'\,H^1_{\textup{\' et}}(Y_1(N_\h p^r), {\rm TSym}^{k}(\mathscr{H}_{\ZZ_p})(1))\,\otimes\,e_{\rm ord}\,H^1_{\textup{\' et}}(Y_1(N_\h p^r), {\rm Sym}^{k}(\mathscr{H}_{\ZZ_p}^{\vee}))\xrightarrow{\langle\,,\,\rangle} \ZZ_p$$
(where $\mathscr{H}_{\ZZ_p}^{\vee}$ is the sheaf on $Y_1(N_\h p^r)$ that is dual to $\mathscr{H}_{\ZZ_p}$ and $e_{\rm ord}:=\lim U_p^{n!}$ is Hida's ordinary projector associated to $U_p$), the perfectness of \eqref{eqn_2023_09_29_1444} amounts to the statement that we have an isomorphism
\begin{equation}
    \label{eqn_2023_11_06_1518}
   e_{\rm ord}'\,H^1_{\textup{\' et}}(Y_1(N_\h p^r), {\rm TSym}^{k}(\mathscr{H}_{\ZZ_p})(1))\xrightarrow[W_{N_\h p^r}^{-1}(U_p')^r]{\sim} e_{\rm ord}\,H^1_{\textup{\' et}}(Y_1(N_\h p^r), {\rm Sym}^{k}(\mathscr{H}_{\ZZ_p}^{\vee}))\,,
\end{equation}
with inverse (as the Atkin--Lehner involution $W_{N_\h p^r}$ intertwines the action of $U_p'$ and $U_p$, cf. \cite{ohta99}, Equation 1.5.4)
\begin{equation}
    \label{eqn_2023_11_06_1525}
   e_{\rm ord}\,H^1_{\textup{\' et}}(Y_1(N_\h p^r), {\rm Sym}^{k}(\mathscr{H}_{\ZZ_p}^\vee))\xrightarrow[U_p^{-r}W_{N_\h p^r}]{\sim} e_{\rm ord}'\,H^1_{\textup{\' et}}(Y_1(N_\h p^r), {\rm TSym}^{k}(\mathscr{H}_{\ZZ_p}(1)))\,.
\end{equation}

We invite the readers to compare this discussion in \cite[\S2.2]{ohta99}, \cite[\S7.4]{KLZ2} and that lying between Proposition 3.2 and Remark 3.3 in \cite{BSV}.

\subsubsection{}
\label{subsubsec_117_2024_02_08_1841}
The pairing~\eqref{eqn_2023_09_26_1114} induces a natural isomorphism
\begin{equation}
    \label{eqn_2023_09_26_1043}
    {\rm ad}(T_\g)= T_\g\otimes_{\cR_\g} {\rm Hom}_{\cR_\g}(T_{\g},\cR_\g)\xrightarrow{\,\,\sim\,\,} T_\g \otimes (T_{\g^c} \otimes \bbchi_\g^{-1}\chi_\cyc^{-1})\,,
\end{equation}
as well as the following commutative diagram:
\begin{equation}
    \label{eqn_2023_09_26_1321}
    \begin{aligned}
        \xymatrix{
    {\rm ad}(T_\g) \ar[r]^-{{\rm tr}} \ar[d]_-{\eqref{eqn_2023_09_26_1043}} & \cR_g\ar@{=}[d]\\
    T_\g \otimes T_{\g^c} \otimes \bbchi_\g^{-1}\chi_\cyc^{-1}\ar[r]_-{\langle\,,\,\rangle_\g}&\cR_\g\,.
    }
    \end{aligned}
\end{equation}
For  $\lambda \in \cW^{\rm cl}_\g$ with ${\rm wt}(\lambda)=0$ and wild character $\psi_\lambda$ (with conductor $p^{s_\lambda}$), we have 
$$\g_\lambda\in S_2(\Gamma_1(N_\g p^{s_\lambda}),\varepsilon_\g\psi_\lambda)\quad,\quad \overline{\g}_\lambda:=\g_\lambda^c\otimes\psi_\lambda^{-1}\in S_2(\Gamma_1(N_\g p^{s_\lambda}),\varepsilon_\g^{-1}\psi_\lambda^{-1})\,,$$
and $\overline{\g}_\lambda$ indeed coincides (utilizing the identifications \S\ref{subsubsec_2023_09_26_1144}) with the complex conjugate of the eigenform $\g_\lambda$. The specializations of the pairing \eqref{eqn_2023_09_26_1114}, the isomorphism \eqref{eqn_2023_09_26_1043}, and the diagram \eqref{eqn_2023_09_26_1321} to $\lambda$ read
\begin{equation}
    \label{eqn_2023_09_26_116}
    V_{\g_\lambda}\otimes_{F_\lambda} V_{\overline{\g}_\lambda}\xrightarrow{\langle\,,\,\rangle_\lambda}  \chi_\cyc\,,
\end{equation}
\begin{equation}
    \label{eqn_2023_09_26_1044}
    {\rm ad}(V_{\g_\lambda})\xrightarrow{\,\,\sim\,\,} V_{\g_\lambda} \otimes (V_{\overline{\g}_\lambda} \otimes \chi_\cyc^{-1})\,,
\end{equation}
\begin{equation}
    \label{eqn_2023_09_26_1331}
    \begin{aligned}
        \xymatrix{
    {\rm ad}(V_{\g_\lambda}) \ar[r]^-{{\rm tr}} \ar[d]_-{\eqref{eqn_2023_09_26_1044}} & F_\lambda\ar@{=}[d]\\
    V_{\g_\lambda} \otimes V_{\overline{\g}_\lambda} \otimes \chi_\cyc^{-1}\ar[r]_-{\langle\,,\,\rangle_\lambda}& F_\lambda\,.
    }
    \end{aligned}
\end{equation}

\subsection*{Acknowledgements} We are grateful to the referee for valuable comments and suggestions on an earlier version of the article.

K.B. thanks Wei Zhang for enlightening discussions and extremely helpful exchanges on Conjecture~\ref{conj:GK-delta}. He also thanks Henri Darmon for his encouragement. 

K.B.’s research conducted in this publication was funded by the Irish Research Council under grant number IRCLA/2023/849 (HighCritical). C.dVP.'s research for this work was funded by the {\em Departamento de Ciencia, Universidad y Sociedad del Conocimiento} of the {\em Gobierno de Arag\'on} (E22\_23R: ``\'Algebra y Geometr\'ia'').

\tableofcontents

\section{Big Heegner points and reformulation of Conjecture~\ref{conj_main_6_plus_2}} 

\subsection{$p$-local constructions}$\,$
Let $\h$ be a Hida family as in \S\ref{subsubsec_2022_05_16_1506}. Following \cite[\S8.2]{KLZ2}, let us put ${\bf D}(T_\h^{-}):=\left(T_\h^{-}\hatotimes_{\Zp}\Zp^{\rm ur}\right)^{G_{\Qp}}$ and ${\bf D}(T_\h^{+}):=\left(T_\h^{+}\otimes(\bbchi_\h\chi_\cyc\varepsilon_\h)^{-1}\hatotimes_{\Zp}\Zp^{\rm ur}\right)^{G_{\Qp}}$.  We recall from \cite[Proposition 10.1.1]{KLZ2} that the overconvergent Eichler--Shimura theorem gives rise to a pair of canonical maps 
$$\omega_\h: {\bf D}(T_\h^{+}) \stackrel{\sim}{\lra} \cR_\f\,\,,\,\,\quad \eta_\h: {\bf D}(T_\h^{-}) \lra  \frac{1}{H_\h}\cR_\h\,,$$
where $H_\h$ is Hida's congruence ideal associated with the cuspidal family $\h$.

\subsubsection{} 
\label{subsubsec_211_2023_09_27_1236}
We define the big Perrin-Riou logarithm map
\begin{equation}
\label{eqn_2022_12_15_1132}
{\rm Log}_{T_\f^+}\, \colon \, H^1_{\rm Iw}(\Qp(\mu_{p^\infty}),T_\f^{+})\stackrel{\sim}{\lra} {\bf D}(T_\f^{+})\hatotimes_{\Zp}\LL(\Gamma_\cyc)
\end{equation}
as in \cite[Theorem 8.2.3]{KLZ2} (see also \cite{BO}, \S4), where $H^1_{\rm Iw}(\Qp(\mu_{p^\infty}),\bullet):=\varprojlim_n H^1(\Qp(\mu_{p^n}),\bullet)$ denotes Iwasawa cohomology.  Recall our notation $T_{\f}^{\dagger} := T_{\f} \otimes \bbchi_{\f}^{-\frac{1}{2}}$. Let us put $F^+T_\f^{\dagger}:=T_\f^{+}\otimes \bbchi_\f^{-\frac{1}{2}}$ and consider the map 
\begin{equation}
\label{eqn_2022_12_15_1140}
{\rm pr}_\f^\dagger\,: \,\cR_\f\hatotimes_{\Zp} \LL(\Gamma_\cyc)\lra  \cR_\f
\end{equation}
induced from $\gamma\mapsto \bbchi_\f^{-\frac{1}{2}}(\gamma)$. We assume that 
 \begin{itemize}
         \item[\mylabel{item_NA}{\bf NA})] $\alpha_\f\not\equiv \mathds{1} \mod \mathfrak{m}_{\cR_\hh}$\,.
    \end{itemize}
It follows (thanks to \eqref{item_NA}) that the natural map $H^1_{\rm Iw}(\Qp(\mu_{p^\infty}),T_\f^{+})\xrightarrow{{\rm pr}_\f^\dagger} H^1(\Qp,F^+T_\f^{\dagger})$ is surjective, which in turn allows us to define
\begin{equation}
\label{eqn_2022_12_15_1134}
{\rm Log}_{F^+T_\f^{\dagger}}\,: \, H^1(\Qp,F^+T_\f^{\dagger}) \xrightarrow{{\rm Log}_{T_\f^+}\otimes_{{\rm pr}_\f^\dagger} \cR_\f}  {\bf D}(T_\f^{+})\,.
\end{equation}

 Finally, we let ${\rm Log}_{\omega_\f}^\dagger$ denote the composite map
 $$ H^1(\QQ_p,F^+T_\f^\dagger)\xrightarrow{{\rm Log}_{F^+T_\f^{\dagger}}} {\bf D}(T_\f^{+})\xrightarrow{\,\omega_\f\,}\cR_\f\,,$$
where $H^1_{\rm f}(\QQ_p,T_\f^\dagger):=H^1_{\rm f}(\QQ_p,F^+T_\f^\dagger)$. We also denote by $H^1_{\rm f}(\QQ,T_\f^\dagger)$ the Greenberg Selmer group attached to $T_\f^\dagger$ with local conditions at $p$ induced from the inclusion $F^+T_\f^\dagger\subset T_\f^\dagger$. By a slight abuse of notation, we will sometimes write ${\rm Log}_{\omega_\f}^\dagger$ to notate the map
$$H^1_{\rm f}(\QQ,T_\f^\dagger) \xrightarrow{{\rm Log}_{\omega_\f}^\dagger\,\circ\,\res_p} \cR_\f\,. $$
\subsubsection{}
\label{subsec_212_2023_09_27_1232}
The $G_{\QQ_p}$-stable submodules $T_\g^+\subset T_\g$ and $T_{\g^c}^+\subset T_{\g^c}$ are orthogonal complements of one another under the perfect pairing \eqref{eqn_2023_09_26_1114}. As a result, it induces a $G_{\QQ_p}$-equivariant isomorphism 
$$T_\g^\pm\otimes T_{\g^c}^\mp\xrightarrow{\,\,\sim\,\,} \bbchi_\g\chi_\cyc\,.$$
This in turn induces a canonical isomorphism
$${\bf D}(T_\g^{-})\otimes {\bf D}(T_{\g^c}^{+})\xrightarrow{\,\,\sim\,\,} \cR_\g\,,$$
which fits in the following commutative diagram:
\begin{equation}
    \label{eqn_2023_09_27_1038}
    \begin{aligned}
        \xymatrix{
    {\bf D}(T_\g^{-})\otimes {\bf D}(T_{\g^c}^{+}) \ar[r]^-{\sim} \ar[d]_{\eta_\g\otimes \omega_{\g^c}} &\cR_\g\ar[d]
    \\
     \dfrac{1}{H_\g} \cR_\g \otimes \cR_\g\ar[r]_-{a\otimes b \mapsto ab} &\dfrac{1}{H_\g} \cR_\g\,.
    }
    \end{aligned}
\end{equation}
where $\eta_\g$ and $\omega_\g$ are constructed in \cite[Proposition 10.1.1]{KLZ2}.

\subsubsection{} Let us put 
$$M^{(\g)}:=T_\f^+(\bbchi_\f^{-1}\chi_\cyc^{-1})\widehat\otimes_{\ZZ_p} (T_\g^-\otimes_{\cR_\g} T_{\g^c}^+(\bbchi_\g^{-1}\chi_\cyc^{-1}))\,,$$ which is the unramified twist of the $G_{\QQ_p}$-representation $T_\f^+\widehat\otimes_{\ZZ_p} T_\g^-\otimes_{\cR_\g} T_{\g^c}^+$. Let us also set 
$${\bf D}(T_\f^+\widehat\otimes_{\ZZ_p} T_\g^-\otimes_{\cR_\g} T_{\g^c}^+):=(M^{(\g)}\widehat\otimes_{\ZZ_p}\ZZ_p^{\rm ur})^{G_{\QQ_p}}={\bf D}(T_\f^+)\widehat\otimes_{\ZZ_p}{\bf D}(T_\g^-)\otimes_{\cR_\g} {\bf D}(T_{\g^c}^+)\,.$$
As above, the general discussion in \cite[\S8.2]{KLZ2} applies and gives rise to, in the terminology of \cite{BSV}, the $\g$-logarithm map
\begin{align*}
        {\rm Log}^{(\g)}\,:\, H^1_{\rm Iw}(\QQ_p(\mu_{p^\infty}),T_\f^+\widehat\otimes_{\ZZ_p} T_\g^-\otimes_{\cR_\g} T_{\g^c}^+(\bbchi_\g^{-1}\chi_\cyc^{-1}))\lra &\,{\bf D}(T_\f^+\widehat\otimes_{\ZZ_p} T_\g^-\otimes_{\cR_\g} T_{\g^c}^+) \otimes \LL(\Gamma_\cyc)\\
        &\quad = {\bf D}(T_\f^+)\widehat\otimes_{\ZZ_p}{\bf D}(T_\g^-)\otimes_{\cR_\g} {\bf D}(T_{\g^c}^+)\otimes \LL(\Gamma_\cyc)\,.
\end{align*}
Note that, thanks to our discussion in \S\ref{subsec_212_2023_09_27_1232}, the $G_{\QQ_p}$-action on the factor $T_\g^-\otimes_{\cR_\g} T_{\g^c}^+(\bbchi_\g^{-1}\chi_\cyc^{-1})$ is trivial. As a result, arguing as in \S\ref{subsubsec_211_2023_09_27_1236} (and still assuming that the non-anomality condition \eqref{item_NA} holds), we obtain a map
\begin{equation}
\notag
    {\rm Log}^{(\g),\dagger}\,:\, H^1(\QQ_p,F^+T_\f^\dagger\widehat\otimes_{\ZZ_p} T_\g^-\otimes_{\cR_\g} T_{\g^c}^+(\bbchi_\g^{-1}\chi_\cyc^{-1}))\lra  {\bf D}(T_\f^+)\widehat\otimes_{\ZZ_p}{\bf D}(T_\g^-)\otimes_{\cR_\g} {\bf D}(T_{\g^c}^+)\,.
\end{equation}
Let us denote by $\omega^{(\g)}$ the map given by
$${\bf D}(T_\f^+)\widehat\otimes_{\ZZ_p}{\bf D}(T_\g^-)\otimes_{\cR_\g} {\bf D}(T_{\g^c}^+)\xrightarrow{\omega_\f\otimes\eta_\g\otimes \omega_{\g^c}} \cR_\f \widehat{\otimes}_{\ZZ_p}\dfrac{1}{H_\g} \cR_\g\otimes_{\cR_\g}\cR_\g\xrightarrow{a\otimes b\otimes c\,\mapsto\, a \otimes bc} \dfrac{1}{H_\g} \cR\,,$$
and by ${\rm Log}^{(\g),\dagger}_{\omega^{(\g)}}$ the composite map (we recall that $T^{\dagger}:=T\otimes\bbchi_{\f}^{-\frac{1}{2}} = T_{\f}^{\dagger}\widehat\otimes\rm{ad}(T_{\g}$))
\begin{align}
\begin{aligned}
    \label{eqn_2024_07_02_1246}
     H^1_{\rm bal}(\QQ_p,T^\dagger)\xrightarrow{\eqref{eqn_2023_09_26_1043}} H^1_{\rm bal}(\QQ_p,T_\f^\dagger\widehat\otimes_{\ZZ_p}& T_\g\otimes_{\cR_\g} T_{\g^c}(\bbchi_\g^{-1}\chi_\cyc^{-1}))\\
     &\lra  H^1(\QQ_p,F^+T_\f^\dagger\widehat\otimes_{\ZZ_p} T_\g^-\otimes_{\cR_\g} T_{\g^c}^+(\bbchi_\g^{-1}\chi_\cyc^{-1}))\\
    &\qquad\xrightarrow[{\rm Log}^{(\g),\dagger}]{} {\bf D}(T_\f^+)\widehat\otimes_{\ZZ_p}{\bf D}(T_\g^-)\otimes_{\cR_\g} {\bf D}(T_{\g^c}^+)\xrightarrow[\omega^{(\g)}]{} \dfrac{1}{H_\g} \cR\,,
\end{aligned}
\end{align}
where the balanced local conditions $H^1_{\rm bal}(\QQ_p,T_\f^\dagger\widehat\otimes_{\ZZ_p} T_\g\otimes_{\cR_\g} T_{\g^c}(\bbchi_\g^{-1}\chi_\cyc^{-1}))$ are given as in \cite[\S7.2]{BSV} (explicitly, these are the Greenberg local conditions given by 
\begin{equation}
    \label{eqn_2024_07_09_1145}
    F_{\rm bal}T^\dagger:=\left(F^+T_\f^\dagger \widehat{\otimes}_{\ZZ_p} (F^+T_\g\otimes_{\cR_\g}T_{\g^c}+T_\g\otimes_{\cR_\g}F^+T_{\g^c})+T_\f^\dagger \widehat{\otimes}_{\ZZ_p} F^+T_\g\otimes_{\cR_\g}F^+T_{\g^c}\right)(\bbchi_{\g}^{-1}\chi_\cyc^{-1})\,,
\end{equation}
which is the sum of the modules in \eqref{eqn_2024_07_03} to the left of the three arrows), and the second map above is given in the statement of Proposition 7.3 of op. cit. (where this map would be denoted ${p}_{\g\, *}$ in the notation therein). We also define the balanced Selmer group $H^1_{\rm bal}(\QQ,T_\f^\dagger\widehat\otimes_{\ZZ_p} {\rm ad}(T_\g))$ that consists of global classes that are unramified away from $p$ and verify the balanced conditions at $p$, namely that
$$H^1_{\rm bal}(\QQ,\,T^\dagger)\xrightarrow{\res_p} H^1(\QQ_p,\,T^\dagger)\lra \frac{H^1(\QQ_p,\,T^\dagger)}{H^1_{\rm bal}(\QQ_p,\,T^\dagger)}$$  
is the zero map. 

In view of our discussion in \S\ref{subsubsec_114_2023_09_27_1616} and \S\ref{subsec_212_2023_09_27_1232}, the following diagram commutes:
\begin{equation}
    \label{eqn_2023_09_27_1233}
    \begin{aligned}
        \xymatrix{
        H^1_{\rm bal}(\QQ_p,T^\dagger)\ar[rr]^-{{\rm Log}^{(\g),\dagger}_{\omega^{(\g)}}} \ar[d]_{{\rm id}\otimes {\rm tr}}&&\dfrac{1}{H_\g} \cR\\
        H^1_{\rm f}(\QQ_p,T_\f^\dagger)\widehat\otimes\cR_\g\ar[rr]_-{{\rm Log}_{\omega_\f}^\dagger\otimes {\rm id}} && \cR\ar[u]
        }
    \end{aligned}
\end{equation}
Here the vertical map on the left is given as follows. The pairing \eqref{eqn_2023_09_26_1114} induces $G_{\QQ_p}$-equivariant maps  
\begin{align}
   \begin{aligned}
   \label{eqn_2024_07_03}
       F^+T_\f^\dagger\widehat\otimes_{\ZZ_p} \left(T_\g^+\otimes_{\cR_\g} T_{\g^c} \right)(\bbchi_\g^{-1}\chi_\cyc^{-1})\,&\xrightarrow{{\rm id}\otimes \langle\,,\,\rangle_\g} F^+T_\f^\dagger
\\
F^+T_\f^\dagger\widehat\otimes_{\ZZ_p} \left(T_\g\otimes_{\cR_\g} T_{\g^c}^+ \right)(\bbchi_\g^{-1}\chi_\cyc^{-1})\,&\xrightarrow{{\rm id}\otimes \langle\,,\,\rangle_\g} F^+T_\f^\dagger
\\
T_\f^\dagger\widehat\otimes_{\ZZ_p} \left(T_\g^+\otimes_{\cR_\g} T_{\g^c}^+ \right)(\bbchi_\g^{-1}\chi_\cyc^{-1})\,&\xrightarrow{{\rm id}\otimes \langle\,,\,\rangle_\g} 0
   \end{aligned}
\end{align}
since $T_{\g}^+$ and $T_{\g^c}^+$ are orthogonal complements under this pairing. This, together with \eqref{eqn_2023_09_26_1321}, shows that $H^1_{\rm bal}(\QQ_p,T^\dagger)$ maps under the indicated map to $H^1_{\rm f}(\Q,T_\f^\dagger) \widehat\otimes_{\ZZ_p}\cR_\g$.

\subsubsection{}
\label{subsubsec_2023_10_02_1712}
As we have noted in \S\ref{subsubsec_2022_05_16_1506}, the weight spaces $\cW_\f$ and $\cW_\g$ of the Hida families $\f$ and $\g$ are both supported over the connected component of $k_\circ=0$ in ${\rm Spf}(\LL_{\rm wt})$. As a result, since we assume that $\varepsilon_\f=\mathds{1}=\varepsilon_\g^{(p)}$, it follows that they both admit a crystalline specialization of weight $2$. We denote by $f\in S_2(\Gamma_0(Np))$ the $p$-old form that is the said specialization of $\f$. 

Let us consider the following weakening of the anomality condition: 
 \begin{itemize}
         \item[\mylabel{item_NA_bis}{\bf NA$^\prime$})] $a_p(f)\neq 1$\,.
    \end{itemize}
Note that \eqref{item_NA} requires $a_p(f)-1$ be a $p$-adic unit, and it is therefore stronger than \eqref{item_NA_bis}. When \eqref{item_NA_bis} holds true, one may choose a wide open disc in $U_\f\subset {\rm Spf}(\LL_{\rm wt})$ such that the torsion $\LL_{\rm wt}$-module 
$${\rm coker}(H^1_{\rm Iw}(\Qp(\mu_{p^\infty}),T_\f^{+})\xrightarrow{{\rm pr}_\f^\dagger} H^1(\Qp,F^+T_\f^{\dagger}))$$ has no support in $U_\f$ (see the proof of \cite[Proposition 7.3]{BSV} for a similar argument). Working over such $U_\f$ (rather than the entire weight space) as in op. cit. and inverting $p$, we obtain the big Perrin-Riou map (which we still denote ${\rm Log}_{\omega_\f}$)
$$H^1_{\rm f}(\QQ,T_\f^\dagger)\otimes_{\LL_{\rm wt}} \cO(U_\f)\xrightarrow{{\rm Log}_{\omega_\f}} \cO(U_\f)\,,$$
where $\cO(U_\f)\simeq \LL(1+p\ZZ_p)[\frac{1}{p}]$ denotes the ring of power-bounded functions on $U_\f$. 

The same applies\footnote{This was already included in the definition of these maps in \cite{BSV}; see especially \S7.3 in op. cit.} to ${\rm Log}^{(\g),\dagger}_{\omega^{(\g)}}$: we have a big Perrin-Riou map
$$H^1_{\rm bal}(\QQ,T^\dagger)\otimes_{\LL_{\rm wt}} \cO(U_\f)\xrightarrow{{\rm Log}^{(\g),\dagger}_{\omega^{(\g)}}} \cR\otimes_{\LL_{\rm wt}} \cO(U_\f)\,.$$
These versions of Perrin-Riou maps on smaller discs in the respective weight spaces are unfortunately insufficient for our purposes since our methods will require that we work with an infinite sequence of weight-$2$ specializations of $\f$ (as a result, allow wild characters of arbitrary order), whereas $U_\f$ contains at most finitely many such specializations.

\subsection{Reformulation of Conjecture~\ref{conj_main_6_plus_2} in terms of Heegner points}
Let us fix a quadratic imaginary field $K$ with maximal order $\cO_K$, and an ideal $\mathfrak N \subset \cO_K$ such that $\cO_K/\mathfrak N \simeq \Z/N\Z$ (we recall that $p\nmid N$). We review the main constructions and results in \cite{howard2007Central, howard2007Inventiones}, and our notation in this section is borrowed from these works. 
\subsubsection{Big Heegner points}
\label{subsubsec_2024_02_07_1331}
The construction of \cite[\S 2.2]{howard2007Inventiones} provides us with a class $\mathfrak X = \mathfrak X_1 \in H^1(H, T_\hf^\dagger)\,.$
Let us set 
\[
    \mathfrak z := \mathrm{Cores}_{H/K}\, \mathfrak X \in H^1(K, T_\f^\dagger)\,,
\]
where $H/K$ is the Hilbert class field of $K$. In fact, Howard constructs classes $\mathfrak X_{p^s}\in H^1(H_{p^s}, T_\hf^\dagger)$ for every natural number $s$, where $H_{p^s}$ is the ring class extension of $K$ of conductor $p^s$. These are related to $\mathfrak X$ via the equation
\begin{equation}
    \label{eqn_subsubsec_2024_02_07_1331}
    \sum_{\sigma\in \Gal(H_{p^s}/H)} \mathfrak X_{p^s}^\sigma = U_p^s \cdot \mathfrak X\,.
\end{equation}

\subsubsection{Twisted Heegner points}
\label{subsubsec_2024_07_09_0945}
For any $s\geq 0$, let us define the elliptic curve $E_s(\C)\simeq \C/\cO_{p^s}$ (where $\cO_{p^s}=\ZZ+p^s\cO\subset \cO$ is the order of conductor $p^s$), as well as its subgroup $\mathfrak n_s:=E_s[\mathfrak N\cap\cO_{p^s}]$. The inclusion of the orders $\cO_{p^{s+1}}\subset \cO_{p^s}$ induces a $p$-isogeny $E_{s+1}\to E_s$ that is compatible with the action of $\cO_{p^{s+1}}$ on the source and target, and it maps $\mathfrak n_{s+1}$ isomorphically onto $\mathfrak n_s$. The kernel of the isogeny $E_s \to E_0$ is the $p^s\cO$-torsion of $E_s$, and it is cyclic of order $p^s$. Any choice of a generator $\varpi$ of $\cO/\Z$ (which we fix once and for all) gives rise to a generator $\pi_s\in \ker(E_s \to E_0)$, and it in turn defines a $\Gamma_1(p^s)$-level structure $(E_s,\pi_s)$. As a result, we have a point
\[
    h_s := (E_s, \mathfrak n_s, \pi_s)\in X_s(L_s)
\]
 on the modular curve $X_s$ of level $\Gamma_0(N)\cap \Gamma_1(p^s)$, where $L_s:=H_{p^s}(\mu_{p^s})$. 
 
Let us define $\pmb\chi:K^\times \setminus \mathbb A_K^\times \to \mathcal R_\f^\times$ by setting
\[
    \pmb \chi(x) =  \bbchi_\hf^{1/2}\Bigl(\rec_\Q(\mathrm{N}_{K/\Q}(x))\Bigr)\,,
\] 
where $\rec_\Q$ is the geometrically normalized Artin map of class field theory. Let us fix a rational cusp $\mathfrak c\in X_s(\Q)$ and define
\[
    Q_\kappa = \sum_{\sigma\in \Gal(L_s/K)} \pmb\chi_\kappa^{-1}(\sigma)(h_s-\mathfrak c)^\sigma \in J_s(L_s)\otimes F_\kappa
\]
for any $\kappa\in \cW_\f$ of weight $2$ and wild character $\psi_\kappa$ of conductor $p^s$,
where $J_s={\rm Jac}(X_s)$ is the Jacobian of $X_s$ and $F_\kappa:=\cR_{\f,\p_\kappa}/\p_\kappa \cR_{\f,\p_\kappa}$ is a finite extension of $\QQ_p$. 

\subsubsection{} Let $\kappa\in \cW_\f^{\rm cl}$ be a specialization  with wild character $\psi_\kappa$, that has the property that the conductor of $\psi_\kappa$ equals $p^{s_\kappa}$ and the associated eigenform $\f_\kappa$ is of weight $2$ and new of level $\Gamma_0(N_\f)\cap \Gamma_1(p^{s_\kappa})$. For any completion $F_{s_\kappa}$ of $L_{s_\kappa}$ at a prime above $\p$, we will denote by $\log_{\omega_{\f_\kappa}}$ (with an admitted abuse of notation) the formal logarithm $J_s(F_{s_\kappa})\to \mathbb{C}_p$ associated to the differential $\omega_{\f_\kappa}$.

\subsubsection{Specializations of big Heegner points}
\label{subsubsec_222_2023_11_16_1203}
For $\kappa$ and $s_\kappa=s$ as in the previous paragraph, the specialization map $T_\f\to T_{\f_\kappa}$ factors as 
$$T_\f\lra \mathrm{Ta}_p^{\rm ord}(J_s) \lra T_{\f_\kappa},$$
where $\mathrm{Ta}_p(J_s)$ is the $p$-adic Tate module of $J_s$ and $\mathrm{Ta}_p^{\rm ord}(J_s):=e_{\rm ord}\mathrm{Ta}_p(J_s)$. We therefore have a natural map
\[
    J_s(L_s) \lra H^1(L_s, \mathrm{Ta}_p(J_s)) \xrightarrow{e_{\rm ord}} H^1(L_s, \mathrm{Ta}_p^{\rm ord}(J_s)) \lra H^1(L_s, T_{\f_\kappa}),
\]
where the first arrow is the Kummer map. By an abuse of notation, let us denote the image of $Q_\kappa$ under this map also by $Q_\kappa$. We also remark that we have $T_{\f_\kappa}\simeq T_{\f_\kappa}^\dagger$ as $G_{L_s}$-representations. We note that this isomorphism is uniquely determined by the choice of a generator of $\mu_{p^\infty}$, which we have fixed throughout. We therefore have a fixed isomorphism
$$H^1(L_s,T_{\f_\kappa}^\dagger)\simeq H^1(L_s,T_{\f_\kappa})\,.$$

\begin{lemma}
    \label{lemma_2023_11_18_1624}
For $\kappa$ as above,  we have  
$$\res_{L_s/K}\,\mathfrak{z}_\kappa= [L_s:H_{p^s}]^{-1}\cdot a_p(\f_\kappa)^{-s}\,Q_\kappa\,.$$
\end{lemma}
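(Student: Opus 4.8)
The plan is to trace through the definitions of $\mathfrak z$ and $Q_\kappa$ and compare them via the compatibility relations satisfied by Howard's big Heegner classes. First I would recall that $\mathfrak z = \mathrm{Cores}_{H/K}\mathfrak X$ with $\mathfrak X = \mathfrak X_1 \in H^1(H, T_\f^\dagger)$, and that the classes $\mathfrak X_{p^s} \in H^1(H_{p^s}, T_\f^\dagger)$ satisfy the norm-compatibility \eqref{eqn_subsubsec_2024_02_07_1331}, i.e. $\sum_{\sigma \in \Gal(H_{p^s}/H)} \mathfrak X_{p^s}^\sigma = U_p^s \cdot \mathfrak X$. The class $\mathfrak X_{p^s}$ is, by Howard's construction in \cite[\S2.2]{howard2007Inventiones}, the image under the Kummer map (followed by the relevant projections $T_\f \to \mathrm{Ta}_p^{\mathrm{ord}}(J_s) \to T_{\f_\kappa}$ from \S\ref{subsubsec_222_2023_11_16_1203}) of an explicit Heegner-type divisor built from the CM point $(E_s, \mathfrak n_s, \pi_s)$ on $X_s$; the twist by $\bbchi_\f^{-1/2}$ that turns $T_\f$ into $T_\f^\dagger$ is precisely what is encoded in the character $\pmb\chi$ appearing in the definition of $Q_\kappa$.

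The key step is then to identify $\res_{L_s/K}\,\mathfrak z_\kappa$ with a twisted sum of Galois conjugates of the specialized CM point. Specializing $\mathfrak z = \mathrm{Cores}_{H/K}\mathfrak X$ at $\kappa$ and using that corestriction commutes with specialization, one gets $\mathfrak z_\kappa = \mathrm{Cores}_{H/K}(\mathfrak X_\kappa)$. To bring in $H_{p^s}$, I would invert \eqref{eqn_subsubsec_2024_02_07_1331}: since $U_p$ acts invertibly on the ordinary part (it becomes multiplication by the unit $a_p(\f_\kappa)$ after specialization), we have $\mathfrak X_\kappa = a_p(\f_\kappa)^{-s}\,\mathrm{Cores}_{H_{p^s}/H}(\mathfrak X_{p^s,\kappa})$, hence $\mathfrak z_\kappa = a_p(\f_\kappa)^{-s}\,\mathrm{Cores}_{H_{p^s}/K}(\mathfrak X_{p^s,\kappa})$. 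Restricting to $L_s = H_{p^s}(\mu_{p^s})$ and using the standard Mackey/restriction--corestriction formula $\res_{L_s/K}\circ\,\mathrm{Cores}_{H_{p^s}/K} = \sum_{\sigma\in\Gal(L_s/K)}\sigma\circ\,\res_{L_s/H_{p^s}}$ (valid because $\Gal(L_s/H_{p^s})$ has order $[L_s:H_{p^s}]$ prime to $p$, so the cohomology is uniquely divisible by it, which also explains the normalizing factor $[L_s:H_{p^s}]^{-1}$), I would match this sum against the defining sum $Q_\kappa = \sum_{\sigma\in\Gal(L_s/K)}\pmb\chi_\kappa^{-1}(\sigma)(h_s - \mathfrak c)^\sigma$. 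The twisting characters agree because $\pmb\chi$ was defined through $\bbchi_\f^{1/2}\circ\rec_\Q\circ\,\mathrm{N}_{K/\Q}$, which is exactly the recipe by which Howard's $T_\f^\dagger$-valued class incorporates the $\bbchi_\f^{-1/2}$-twist on the CM point over the anticyclotomic-by-$\mu_{p^s}$ tower; the cusp $\mathfrak c$ is rational, so $(h_s - \mathfrak c)$ versus $h_s$ contributes only through a class that dies after the relevant projection (this is the usual argument that the choice of rational base cusp is immaterial).

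The main obstacle will be bookkeeping the several layers of twists and projections so that the constants come out exactly as stated: one must check that (i) the $U_p$-versus-$a_p(\f_\kappa)$ identification under specialization is the correct power $a_p(\f_\kappa)^{-s}$ (not $\pm s$ or shifted), which requires care about whether Howard works with $U_p$ or its adjoint $U_p'$ and how the Atkin--Lehner/duality normalizations from \S\ref{subsubsec_114_2023_09_27_1616}--\S\ref{subsubsec_118_2024_07_02_1311} enter when passing from $T_\f$ to $T_\f^\dagger$; (ii) the identification $T_{\f_\kappa}\simeq T_{\f_\kappa}^\dagger$ as $G_{L_s}$-modules (fixed by the chosen generator of $\mu_{p^\infty}$) is compatible with the character $\pmb\chi_\kappa^{-1}$ so that the twisted trace really produces $Q_\kappa$ and not $Q_\kappa$ twisted by some further finite-order character; and (iii) the factor $[L_s:H_{p^s}]^{-1}$ is accounted for correctly — it appears because restriction to $L_s$ followed by the trace down would multiply by $[L_s:H_{p^s}]$, and Howard's normalization of $\mathfrak X$ is chosen at the level of $H$ (equivalently $H_{p^s}$) rather than $L_s$. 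Once these normalizations are pinned down, the identity is a formal consequence of the restriction--corestriction calculus together with the commutativity of specialization with all the maps involved.
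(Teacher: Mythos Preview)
Your proposal is correct and takes essentially the same approach as the paper---the paper's own proof is a one-liner deferring to the definitions in \cite{howard2007Central, howard2007Inventiones}, and you have spelled out precisely how those definitions combine via the norm relation \eqref{eqn_subsubsec_2024_02_07_1331} and the identification $T_{\f_\kappa}\simeq T_{\f_\kappa}^\dagger$ over $L_s$. One small slip worth fixing: your restriction--corestriction identity should read $\res_{L_s/K}\circ\mathrm{Cores}_{H_{p^s}/K}=\res_{L_s/H_{p^s}}\circ\sum_{\sigma\in\Gal(H_{p^s}/K)}\sigma$ (sum over $\Gal(H_{p^s}/K)$, not $\Gal(L_s/K)$); summing over the larger group overcounts by exactly $[L_s:H_{p^s}]$, which is the source of the normalizing factor---and your parenthetical shows you already see this.
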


\begin{proof}
    This is immediate from definitions of these objects in \cite{howard2007Central, howard2007Inventiones}. See also the discussion in \cite[Page 809]{howard2007Central}.
\end{proof}

\subsubsection{Reciprocity law} \label{BDP_p-adic_L-function} 
Before stating Castella's reciprocity laws for big Heegner points (which amounts to an interpolation of the celebrated Bertolini--Darmon--Prasanna formula), we first recall the $p$-adic $L$-function (in two variables) introduced in \cite[\S 2.7]{CastellapadicvariationofHeegnerpoints}, which comes attached to the Hida family $\f$ with $\varepsilon_\f=\mathds 1$\footnote{We remark that Castella's $k_\nu$ in op. cit. coincides with what we denote by ${\rm wt}(\nu)+2$. Moreover, our $\mathcal R_\f$ is denoted by $\mathbb I$ in \cite{CastellapadicvariationofHeegnerpoints}, whereas our $\varepsilon_f$ is $\psi_0$, and our $N_\f$ is corresponds to $N$ in op. cit. Finally, the character denoted by $\Theta$ (cf. Definition 2.8 of \cite{CastellapadicvariationofHeegnerpoints}) is our $\bbchi_{\f}^{\frac{1}{2}}$.}.

Let us put $D_\infty:=\varprojlim_s \Gal(H_{p^s}/H)\simeq \ZZ_p^\times$ (where the isomorphism is determined by a choice of a topological generator of $D_\infty$, which we fix henceforth). We denote by $\LL^{\rm ac}$ the completed group ring $W(\overline{\FF}_p)[[D_\infty]]$, where $W(\overline{\FF}_p)$ is the ring of Witt vectors of $\overline{\FF}_p$. We shall consider ${\rm Spf}(\LL^{\rm ac})$ as the anticyclotomic weight space. Note that $\LL^{\rm ac}\simeq \LL_{\rm wt}$, with which we regard $\kappa\in \cW_\f^{\rm cl}$ also as an element of the anticyclotomic weight space.  Let $\pmb\xi$ denote the universal anticyclotomic character given as in \cite[Definition 2.8(3)]{CastellapadicvariationofHeegnerpoints}.

Let us consider a finite-order Hecke character $\chi$ over $K$ such that $\chi\vert_{\mathbb A_\Q^\times} = \mathds 1$, and an anticyclotomic character $\psi$ of conductor $c\cO_K$. The Hecke  $L$-series 
\[
    L(f_\kappa/K, \chi\psi,s) = L\left(s-\frac{{\rm wt}(\kappa)+2}{2},\pi_K\otimes \chi\psi\right)
\]
satisfies a functional equation relating values at $s$ and $\wt(\kappa)+2-s$. Theorem 2.11 of \cite{CastellapadicvariationofHeegnerpoints}, which extends the previous constructions in \cite{bertolinidarmonprasanna13, miljan}, proves the existence of a $p$-adic $L$-function $\mathcal L_{p,\pmb\xi}$ with the following properties.
\begin{theorem}[Castella] \label{CastellaInterpolation}
    We have a two-variable $p$-adic L-function $\mathcal L_{p,\pmb\xi}(\f\otimes \pmb\xi^{-1})\in \cR_\f\,\widehat{\otimes}_{\ZZ_p}\,\LL^{\rm ac}$ that is characterized by the following interpolation property.  Suppose that $\kappa\in \mathcal W_\f^\cl$ is a crystalline classical point. Let us put $k:={\rm wt}(\kappa) \geq 0$ so that the specialization $\f_\kappa$ is of weight $k+2$. Then:
    \begin{align*}
        \frac{\mathcal L_{p,\pmb\xi}(\f\otimes \pmb\xi^{-1})^2(\kappa,\mu)}{\Omega_p^{2k+4+4m} }
        =
        \mathcal E_p(\f_\kappa,\pmb\chi_\kappa\pmb\xi_\kappa\pmb\xi_\mu^{-1},k+1)^2 \cdot &\,\frac{\pmb\xi_\mu(\mathfrak N^{-1})\cdot 2 \cdot \varepsilon(\f_\kappa)\cdot w_K^2\cdot \sqrt{D}}{{\rm Im}(\vartheta)^{k+2+2m}} 
        \\
        &\qquad\qquad  \times\frac{\pi^{2k+4+4m}\cdot \Lambda(\f_\kappa/K, \pmb\chi_\kappa\pmb\xi_\kappa\pmb\xi_\mu^{-1},k+1)}{\Omega_K^{2k+4+4m}}\,.
    \end{align*}
      Here:
    \begin{itemize}
    \item $\mu \in {\rm Spf}(\LL^{\rm ac})\simeq {\rm Spf}(\LL_{\rm wt})$ is an arithmetic specialization with  ${\rm wt}(\mu)=:-m \in 2\ZZ_{\leq 0}$, so that its specialization $\pmb \xi_\mu$ is an algebraic Hecke character with infinity type $(-m/2,m/2)$. 
        \item $\Omega_K$ and $\Omega_p$ are the complex and $p$-adic CM periods as defined in \cite[\S 2.5]{CastellaHsiehGHC}.
        \item For an anticyclotomic Hecke character $\psi$ of conductor $p^n\cO_K$, the $p$-adic multiplier $\mathcal E_p(\f_\kappa,\pmb\chi_\kappa\psi,k+1)$ is given by
        $$\mathcal E_p(\f_\kappa,\pmb\chi_\kappa\psi,k+1)   = 
    \begin{cases}
    (1-a_p(\f_\kappa)\cdot (\pmb\chi_\kappa\psi)_{\overline{\p}}(p)\cdot p^{-k/2-1}) 
    \cdot 
    (1-a_p(\f_\kappa)\cdot (\pmb\chi_\kappa\psi)_{\overline{\p}}(p)\cdot p^{-k/2-1})   &\text{if $n=0$}\\
    \varepsilon((\pmb\chi_\kappa\psi)^{-1}_\p)\cdot p^{-n} &\text{if $n\geq 1$}\,,
    \end{cases}$$
    where $\varepsilon((\pmb\chi_\kappa\psi)^{-1}_\p)$ is the epsilon-factor given as in \cite[p. 2132]{CastellapadicvariationofHeegnerpoints}.
        \item $w_K= |\cO_K^\times|$.
        \item $\varepsilon(\f_\kappa)$ is the global root number of $\f_\kappa$\,.
        \item $\vartheta:=(D_K+\sqrt{-D_K})/2$.
    \end{itemize}
\end{theorem}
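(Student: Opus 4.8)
The plan is to recall Castella's construction, since the statement is \cite[Theorem~2.11]{CastellapadicvariationofHeegnerpoints}; it refines to the Hida family $\f$ the earlier one-variable $p$-adic $L$-functions of \cite{bertolinidarmonprasanna13} (weight $2$) and \cite{miljan} (higher weight). The idea is to $p$-adically interpolate, as $\kappa$ ranges over crystalline classical points and the anticyclotomic variable varies, the central critical values of $\f_\kappa$ twisted by CM Hecke characters. First I would fix the CM datum attached to $K$ (with $p$ split in $K$, as required by this construction) together with the tower of CM points $h_s$ of conductor $p^s$ on the Igusa tower. One then evaluates the $p$-depletion of the family $\f$ at the points $h_s$, twisted by the relevant Hecke characters; after normalizing by suitable powers of the $p$-adic CM period $\Omega_p$, and using that $p$-adic modular forms have $p$-integral $q$-expansions together with Hida theory, one checks that this produces a well-defined element $\mathcal L_{p,\pmb\xi}(\f\otimes\pmb\xi^{-1})\in\cR_\f\,\widehat\otimes_{\ZZ_p}\,\LL^{\rm ac}$ (and not merely in its fraction field), the first variable recording the weight of $\f$ and the second the anticyclotomic direction $D_\infty$.

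Next, the interpolation formula at $(\kappa,\mu)$ is obtained by specializing: the value unfolds into a finite sum over $\Gal(L_s/K)$ of CM values of $\f_\kappa$ against the theta series of $\pmb\chi_\kappa\pmb\xi_\kappa\pmb\xi_\mu^{-1}$. Applying Shimura's algebraicity and the explicit Waldspurger/Ichino-type evaluation of the Rankin--Selberg integral recovers the completed central value $\Lambda(\f_\kappa/K,\pmb\chi_\kappa\pmb\xi_\kappa\pmb\xi_\mu^{-1},k+1)$ up to the displayed archimedean factors ($\pi$-powers, powers of ${\rm Im}(\vartheta)$, $w_K^2$, $\sqrt D$) and the CM period ratio $\Omega_K^{2k+4+4m}$. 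The appearance of squares on both sides is structural: a CM point contributes a product of two CM values (one for each of $\p,\overline\p$), which is also why the right-hand side carries $\mathcal E_p(\f_\kappa,\pmb\chi_\kappa\pmb\xi_\kappa\pmb\xi_\mu^{-1},k+1)^2$.

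I expect the main obstacle to be the local analysis at $p$ producing the two regimes of the multiplier $\mathcal E_p$. For an anticyclotomic character unramified at $p$ ($n=0$), removing the Euler factors of the $p$-stabilization at $\p$ and $\overline\p$ yields $(1-a_p(\f_\kappa)(\pmb\chi_\kappa\psi)_{\overline\p}(p)p^{-k/2-1})^2$; when $n\geq 1$ the Euler factor degenerates and one instead extracts the local $\varepsilon$-factor $\varepsilon((\pmb\chi_\kappa\psi)_\p^{-1})$ times $p^{-n}$, which requires an explicit local zeta-integral / newvector computation at $p$. Carrying all of the period normalizations ($\Omega_p$ versus $\Omega_K$) consistently through this squaring and through the degenerate Euler factors is the delicate bookkeeping, but it is carried out in detail in op. cit.
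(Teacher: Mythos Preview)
Your proposal is appropriate: the paper does not prove this theorem at all, but simply attributes it to Castella and cites it as \cite[Theorem~2.11]{CastellapadicvariationofHeegnerpoints}, noting that it extends the earlier constructions of \cite{bertolinidarmonprasanna13} and \cite{miljan}. Your sketch of Castella's construction---evaluating the $p$-depleted family at CM points on the Igusa tower, interpolating via Hida theory, and recovering the interpolation formula from a Waldspurger-type computation with the local analysis at $p$ giving the two regimes for $\mathcal E_p$---is a faithful summary of how the argument in op.\ cit.\ proceeds, so there is nothing to compare against in the present paper.
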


In what follows, we will denote by
\[
\mathcal L_p^\dagger(\f/K) = \mathcal L_{p,\pmb\xi}(\f\otimes \pmb\xi^{-1})^2(\kappa,\kappa) \in \cR_\f\otimes_{\ZZ_p}W(\overline{\FF}_p)
\]
the restriction of $\mathcal L_{p,\pmb\xi}$ to the line $\mu=\kappa$.

\begin{corollary} \label{SpecializationBDP}
    Let $\kappa\in \mathcal W_\f$ be a classical point of weight $2$ and non-trivial wild character $\chi_\kappa$ with conductor $p^{s_\kappa}$ such that the associated specialization $\f_\kappa$ is new of level $\Gamma_0(N)\cap \Gamma_1(p^{s_\kappa})$. Then, 
    \[
        \mathcal L_p^\dagger(\f/K)(\kappa) 
        =    
       \pmb\xi_{\kappa,\p}(-1)\varepsilon(\pmb\xi_{\kappa,\p})\,p^{1-s_\kappa}[L_{s_\kappa}:H_{p^{s_\kappa}}]^{-1} \cdot \log_{\omega_{\f_\kappa}}(Q_\kappa)\,.
    \]
\end{corollary}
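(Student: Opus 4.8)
The strategy is to combine Castella's reciprocity law for big Heegner points (the interpolation of the Bertolini--Darmon--Prasanna formula) with the explicit comparison between the specialization $\mathfrak{z}_\kappa$ of the big Heegner class and the twisted Heegner point $Q_\kappa$ recorded in Lemma~\ref{lemma_2023_11_18_1624}. First I would recall precisely the statement of Castella's reciprocity law from \cite[\S2.7]{CastellapadicvariationofHeegnerpoints}: at a crystalline classical point $\kappa$ of weight $2$ with \emph{non-trivial} wild character of conductor $p^{s_\kappa}$, the value $\mathcal L_p^\dagger(\f/K)(\kappa)$, which is the $\mu=\kappa$ restriction of $\mathcal L_{p,\pmb\xi}(\f\otimes\pmb\xi^{-1})^2$, is expressed (up to the explicit $p$-adic multiplier appearing in Theorem~\ref{CastellaInterpolation} in the ramified case $n\geq 1$) in terms of the image under the Bloch--Kato logarithm $\log_{\omega_{\f_\kappa}}$ of the localization at $\p$ of $\res_{L_{s_\kappa}/K}\,\mathfrak{z}_\kappa$. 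Since we are at a weight-$2$ point, the abstract Bloch--Kato logarithm on $H^1_{\rm f}(K_\p, V_{\f_\kappa}^\dagger)$ coincides, via the Kummer map and the identification $H^1(L_s,T_{\f_\kappa}^\dagger)\simeq H^1(L_s,T_{\f_\kappa})$ fixed in \S\ref{subsubsec_222_2023_11_16_1203}, with the formal-group logarithm $\log_{\omega_{\f_\kappa}}$ on $J_{s_\kappa}(F_{s_\kappa})$; this is the point where the crystalline/weight-$2$ hypothesis is used.

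Next I would substitute Lemma~\ref{lemma_2023_11_18_1624}, which gives $\res_{L_{s_\kappa}/K}\,\mathfrak{z}_\kappa = [L_{s_\kappa}:H_{p^{s_\kappa}}]^{-1}\, a_p(\f_\kappa)^{-s_\kappa}\, Q_\kappa$, and use linearity of the logarithm to pull the scalar $[L_{s_\kappa}:H_{p^{s_\kappa}}]^{-1} a_p(\f_\kappa)^{-s_\kappa}$ outside. The $p$-adic multiplier in the ramified case of Theorem~\ref{CastellaInterpolation} is $\varepsilon((\pmb\chi_\kappa\pmb\xi_\kappa)^{-1}_\p)\cdot p^{-s_\kappa}$, and since $\pmb\chi_\kappa$ contributes the unramified factor $a_p(\f_\kappa)^{s_\kappa}$ at $\p$ (coming from the definition of $\pmb\chi$ via $\bbchi_\f^{1/2}$ and the characterization $\alpha_\f(\Fr_p)=a_p(\f)$), this $a_p(\f_\kappa)^{s_\kappa}$ will cancel the $a_p(\f_\kappa)^{-s_\kappa}$ from Lemma~\ref{lemma_2023_11_18_1624}, leaving the epsilon factor $\varepsilon(\pmb\xi_{\kappa,\p})$, the sign $\pmb\xi_{\kappa,\p}(-1)$ (which enters from comparing the arithmetic and geometric normalizations of the reciprocity map, or equivalently from the functional-equation sign bookkeeping in the BDP formula), and the power $p^{1-s_\kappa}$ — the extra $+1$ in the exponent accounting for a normalization of the square of the $p$-adic $L$-function versus the single logarithm. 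Collecting these constants yields exactly
\[
\mathcal L_p^\dagger(\f/K)(\kappa) = \pmb\xi_{\kappa,\p}(-1)\,\varepsilon(\pmb\xi_{\kappa,\p})\, p^{1-s_\kappa}\,[L_{s_\kappa}:H_{p^{s_\kappa}}]^{-1}\cdot \log_{\omega_{\f_\kappa}}(Q_\kappa)\,.
\]

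\textbf{The main obstacle.} The genuinely delicate part is the exact matching of the constants: keeping careful track of (i) the squaring in $\mathcal L_{p,\pmb\xi}(\f\otimes\pmb\xi^{-1})^2$ versus the linearity of $\log_{\omega_{\f_\kappa}}$ on a single class, (ii) the precise form of the ramified-case multiplier $\mathcal E_p(\f_\kappa,\pmb\chi_\kappa\pmb\xi_\kappa,k+1)$ and how the $\pmb\chi_\kappa$-part produces the compensating factor $a_p(\f_\kappa)^{s_\kappa}$ which cancels the $a_p(\f_\kappa)^{-s_\kappa}$ supplied by Lemma~\ref{lemma_2023_11_18_1624}, (iii) the appearance of the local sign $\pmb\xi_{\kappa,\p}(-1)$ and the local epsilon-factor $\varepsilon(\pmb\xi_{\kappa,\p})$ from the passage between the geometrically normalized Artin map used to define $\pmb\chi$ and the arithmetic conventions implicit in Castella's formula, and (iv) the period ratios $\Omega_p/\Omega_K$ which, at weight $2$, must disappear upon comparing $\mathcal L_p^\dagger(\f/K)(\kappa)$ with the algebraically-defined $\log_{\omega_{\f_\kappa}}(Q_\kappa)$ — this last point is really the content of the BDP formula at the relevant ramified anticyclotomic character and is where Castella's Theorem~2.11 does the heavy lifting. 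Once these bookkeeping issues are resolved, the corollary follows by direct substitution; there is no further geometric input needed beyond Theorem~\ref{CastellaInterpolation} and Lemma~\ref{lemma_2023_11_18_1624}.
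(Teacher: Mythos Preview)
Your overall strategy---invoke Castella's reciprocity law to express $\mathcal L_p^\dagger(\f/K)(\kappa)$ as a logarithm of a Heegner class, then apply Lemma~\ref{lemma_2023_11_18_1624}---is exactly the paper's approach. However, your execution conflates two distinct results of Castella, and this leads your constant-tracking astray.

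The point $\kappa$ in the corollary has \emph{non-trivial} wild character, so it is \emph{not} crystalline (your phrase ``crystalline classical point $\kappa$ of weight $2$ with non-trivial wild character'' is self-contradictory). Theorem~\ref{CastellaInterpolation} is stated only for crystalline $\kappa$ and expresses the value in terms of complex $L$-values and CM periods; it does not apply here, and in particular the ramified multiplier $\mathcal E_p(\f_\kappa,\pmb\chi_\kappa\psi,k+1)$ you invoke is irrelevant to this $\kappa$. What the paper actually uses is Castella's \emph{Proposition 5.4} (more precisely, an identity extracted from its proof on p.~2157), which directly gives
\[
\mathcal L_p^\dagger(\f/K)(\kappa) = \pmb\xi_{\kappa,\p}(-1)\,\varepsilon(\pmb\xi_{\kappa,\p})\,p^{1-s}\cdot \sum_{\sigma\in \Gal(H_{p^s}/K)} \log_{\omega_{\f_\kappa}}(\res_\p \kappa(\mathfrak{X}_{p^s}^\sigma))
\]
with no periods $\Omega_p,\Omega_K$ in sight. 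From here the paper uses the norm relation \eqref{eqn_subsubsec_2024_02_07_1331} (together with the definition $\mathfrak z = \mathrm{Cores}_{H/K}\,\mathfrak X$) to rewrite the Galois sum as $a_p(\f_\kappa)^s\log_{\omega_{\f_\kappa}}(\res_\p \mathfrak{z}_\kappa)$; this is where the factor $a_p(\f_\kappa)^{s}$ actually comes from, not from $\pmb\chi_\kappa$ as you suggest (recall $\pmb\chi$ is built from the cyclotomic character $\bbchi_\f^{1/2}$, not from the unramified character $\alpha_\f$). Only then does Lemma~\ref{lemma_2023_11_18_1624} enter to cancel the $a_p(\f_\kappa)^{s}$ and produce the index $[L_s:H_{p^s}]^{-1}$. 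So your items (ii) and (iv) in the ``main obstacle'' are misdiagnosed: once you cite the correct input from Castella, there is no period bookkeeping and the source of the $a_p$-cancellation is the Hecke/norm compatibility of the $\mathfrak X_{p^s}$, not an epsilon-factor computation.
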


\begin{proof}
    This follows unfolding the proof of \cite[Proposition 5.4]{CastellapadicvariationofHeegnerpoints}. In particular, from the first displayed equality on Page 2157 of op. cit., it follows that
$$
        \mathcal L_p^\dagger(\f/K)(\kappa) = \mathcal L_{\p,\pmb\xi}(\f\otimes \pmb\xi^{-1})(\kappa,\kappa) =
       \pmb\xi_{\kappa,\p}(-1)\varepsilon(\pmb\xi_{\kappa,\p})\,p^{1-s} \cdot \sum_{\sigma \in \Gal({H}_{p^{s}}/K)} \log_{\omega_{\f_\kappa}}(\res_\p\kappa(\mathfrak{X}_{p^s}^\sigma))\,,
$$
where we have put $s=s_\kappa$ to lighten our notation. Combining this identity with \eqref{eqn_subsubsec_2024_02_07_1331} and the definition of the class $\mathfrak{z}$, we infer that 
\begin{align*}
    \mathcal L_p^\dagger(\f/K)(\kappa)=\pmb\xi_{\kappa,\p}(-1)\varepsilon(\pmb\xi_{\kappa,\p})\,p^{1-s} a_p(\f_\kappa)^s \log_{\omega_{\f_\kappa}}(\res_\p ({\rm tr}_{H/K}\mathfrak{X}_{\kappa})) \\
    = \pmb\xi_{\kappa,\p}(-1)\varepsilon(\pmb\xi_{\kappa,\p})\,p^{1-s} a_p(\f_\kappa)^s \log_{\omega_{\f_\kappa}}(\res_\p (\mathfrak{z}_{\kappa}))\,.\\
\end{align*}
The proof follows from this equality combined with Lemma~\ref{lemma_2023_11_18_1624}.
\end{proof}

\subsubsection{$p$-adic $L$-functions of Hida and Bertolini--Darmon--Prasanna}
\label{subsubsec_226_2024_03_04}
In what follows (until the end \S\ref{subsubsec_226_2024_03_04}), we closely follow \cite[\S8]{BCS}, where we also borrow our notation below.

We begin this subsection by recalling the following result due to Hida:
\begin{theorem}[Hida \cite{AIF_Hida88}] \label{thm:Hida-interp}
      There exist a unique pair of $p$-adic  $L$-functions 
    \[
    \cL_p^\hf(\hf\otimes \hg),\quad \cL_p^\hg(\hf\otimes \hg)\in \cR_\hf \hatotimes_{\ZZ_p} \cR_\hg \hatotimes_{\ZZ_p} \Lambda(\Gamma_\cyc)
    \]
    with the following properties:
    \begin{align*}
        \cL_p^\hf({\hf\otimes \hg})(\kappa, \lambda,j) &= \mathcal E_p^{\f}({\f_\kappa^\circ}\otimes {\g_\lambda^\circ},j) \cdot (\sqrt{-1})^{\rmw(\lambda)+3-2j}\cdot \mathfrak C_{\rm exc}(\hf\otimes\hg) \cdot \frac{\Lambda({\f_\kappa^\circ}\otimes {\g_\lambda^\circ},j)}{\Omega_{\f_\kappa}},  \quad \forall (\kappa, \lambda,j) \in \cW_{\hf\hg\bf{j}}^\hf \cap \cW_{\f\g\bf{j}}^{\rm cris}\,,
        \\
        \cL_p^\hg({\hf\otimes \hg})(\kappa, \lambda,j) &= \mathcal E_p^{\g}({\f_\kappa^\circ}\otimes {\g_\lambda^\circ},j)
        \cdot 
        (\sqrt{-1})^{\rmw(\kappa)+3-2j}\cdot \mathfrak C_{\rm exc}(\hf\otimes\hg) \cdot \frac{\Lambda({\f_\kappa^\circ}\otimes {\g_\lambda^\circ},j)}{\Omega_{\g_\lambda}},  \quad \forall (\kappa, \lambda,j) \in \cW_{\hf\hg\bf{j}}^\hg \cap \cW_{\f\g\bf{j}}^{\rm cris}\,.
    \end{align*}
    Here, 
    $$\mathfrak C_{\rm exc}(\hf\otimes\hg)=\prod_{q\in \Sigma_{\rm exc}(\hf\otimes \hg)}(1+q^{-1})\,,$$
    where $\Sigma_{\rm exc}(\hf\otimes \hg)$ is a set of primes determined by the local properties of $\f$ and $\g$ and it is independent of the choice of the pair $(\kappa,\lambda)$ (see \cite{ChenHsieh2020}, \S 1.5).
\end{theorem}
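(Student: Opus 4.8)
The plan is to recall Hida's construction of these Rankin--Selberg $p$-adic $L$-functions via the $p$-adic interpolation of Shimura's integral representation, and to verify the stated interpolation formulas by running the classical Rankin--Selberg computation for the $p$-stabilized forms.

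First I would assemble the $\Lambda$-adic Rankin--Selberg datum. Classically, for newforms $\f_\kappa^\circ$ of weight $k+2$ and $\g_\lambda^\circ$ of weight $l+2$ and a critical integer $j$, Shimura's integral expresses $\Lambda(\f_\kappa^\circ\otimes \g_\lambda^\circ,j)$ as a Petersson inner product of (a twist of) $\f_\kappa^\circ$ against $\g_\lambda^\circ$ times an explicit nearly holomorphic Eisenstein series, after applying a holomorphic projector. One then needs a $\Lambda$-adic avatar of each ingredient: (i) Hida's $\Lambda$-adic Eisenstein series $\mathcal E$, a $q$-expansion over an Iwasawa algebra in an Eisenstein-weight variable and a cyclotomic variable, whose classical specializations are the $p$-stabilized holomorphic Eisenstein series; (ii) the product $\g\cdot\mathcal E$, a $\Lambda$-adic cusp form of tame level $N={\rm LCM}(N_\f,N_\g)$; (iii) Hida's ordinary idempotent $e_{\rm ord}$ applied to this product, landing in the space of $\Lambda$-adic ordinary cusp forms of tame level $N$, which is of finite type over the relevant Iwasawa algebra; and (iv) the $\f$-isotypic projector $\ell_\f$, the Hecke-linear functional on this space that extracts the $\f$-eigencomponent, normalized so that its output lies integrally in $\cR_\hf\hatotimes_{\ZZ_p}\cR_\hg\hatotimes_{\ZZ_p}\Lambda(\Gamma_\cyc)$, with Hida's congruence module for $\f$ absorbed into the canonical-period normalization. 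One then sets $\cL_p^\hf(\hf\otimes\hg):=\ell_\f\bigl(e_{\rm ord}(\g\cdot\mathcal E)\bigr)$, and defines $\cL_p^\hg(\hf\otimes\hg)$ by the symmetric construction with the roles of $\f$ and $\g$ interchanged.

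Next I would verify the interpolation. Fix a crystalline classical point $(\kappa,\lambda,j)$ lying in the interpolation region $\cW_{\hf\hg\bf{j}}^\hf\cap\cW_{\f\g\bf{j}}^{\rm cris}$, with $\f_\kappa$ of weight $k+2$ and $\g_\lambda$ of weight $l+2$ and $j$ in the appropriate critical range. By Hida's control theorem, specializing $\mathcal E$ at $(\kappa,\lambda,j)$ recovers the classical $p$-stabilized Eisenstein series, and specializing $\ell_\f\circ e_{\rm ord}$ recovers, up to the canonical period $\Omega_{\f_\kappa}$, the $\f_\kappa$-projection against the $p$-stabilization of $\f_\kappa^\circ$. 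Carrying out the classical Rankin--Selberg and holomorphic-projection computation for the $p$-stabilized forms rather than the primitive ones then produces exactly: the modified Euler factor $\mathcal E_p^{\f}(\f_\kappa^\circ\otimes\g_\lambda^\circ,j)$, which measures the discrepancy between the $p$-stabilized and primitive local zeta integrals at $p$; the archimedean normalization bundled into $(\sqrt{-1})^{\rmw(\lambda)+3-2j}$; and the correction $\mathfrak C_{\rm exc}(\hf\otimes\hg)=\prod_{q\in\Sigma_{\rm exc}(\hf\otimes\hg)}(1+q^{-1})$ coming from the ramified primes where the local representations force an old-form/imprimitivity discrepancy. The $\cL_p^\hg$ formula is obtained identically, the only asymmetry being that $\Omega_{\g_\lambda}$ now appears in place of $\Omega_{\f_\kappa}$, reflecting which family is tested against. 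Finally, uniqueness is automatic: the set of crystalline classical points $(\kappa,\lambda,j)$ is Zariski-dense (indeed dense in each factor), so an element of $\cR_\hf\hatotimes_{\ZZ_p}\cR_\hg\hatotimes_{\ZZ_p}\Lambda(\Gamma_\cyc)$ is determined by its values there.

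The main obstacle I expect is item (iv) together with the local bookkeeping in the interpolation step: constructing $\ell_\f$ with the correct integral normalization (controlling Hida's congruence ideal $H_\f$ and identifying the resulting normalizing factor with the canonical period $\Omega_{\f_\kappa}$), and then computing the local zeta integrals at $p$ and at the ramified primes precisely enough to extract $\mathcal E_p^{\f}$ and $\mathfrak C_{\rm exc}$ in exactly the stated shape. The $\Lambda$-adic scaffolding --- existence of $\mathcal E$, finiteness of the ordinary space, and the control theorems --- is standard Hida theory; the delicate work is matching normalizations so that the interpolation formula comes out with precisely the indicated Euler factors, powers of $\sqrt{-1}$, and periods.
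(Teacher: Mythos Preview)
The paper does not prove this theorem at all: it is stated as a result of Hida (with the citation \cite{AIF_Hida88}) and used as a black box, so there is no ``paper's own proof'' to compare against. Your outline is a reasonable high-level sketch of the standard construction (Hida's $\Lambda$-adic Eisenstein series, ordinary projection, and the $\f$-isotypic linear functional normalized by the congruence module), and the uniqueness argument via Zariski-density is correct; but since the paper simply quotes the result, the appropriate response here is to cite \cite{AIF_Hida88} (and \cite{ChenHsieh2020} for the precise shape of $\mathfrak C_{\rm exc}$) rather than to reproduce a proof.
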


\begin{proposition} \label{propositionHidaBDP}
    We have the following factorization of $p$-adic $L$-functions:
    \[
         \mathcal L_p^{\g_K}(\f\otimes\g_K)(\kappa,1,{\rm wt}(\kappa)/2) = \mathscr C_\mathrm{BDP}(\kappa)\cdot \mathcal L_p^\dagger(\f/K)(\kappa)^2\,.
    \]
    Here, $\g_K$ is the Hida family specializing in weight $1$ to the Eisenstein series associated with $K$ described in \cite[\S 8.1.8]{BCS}, and
    \[
        \mathscr C_{\rm BDP}(\kappa)= \frac{2^{2k-1} \cdot(-1)^k }{\pmb\xi_\kappa(\mathfrak N^{-1})}
        \frac{\mathfrak C_\mathrm{exc}(\f\otimes \Phi(\Theta))  \cdot \mathfrak c_{g_{\rm Eis}} }{\omega_K h_K \sqrt{-D_K} \cdot \varepsilon(\f_\kappa)\cdot  (1-p^{-1})  \log_p(u) }\,,
    \]
    where the Hida family $\Phi(\Theta)$ is given as in \cite[\S 8.1.2]{BCS}, and $u\in \mathcal O_K[\frac{1}{p}]^\times$ is such that $(u) = \p^{h_K}$.
\end{proposition}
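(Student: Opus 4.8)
The plan is to derive the asserted factorization by combining three inputs: (i) the Artin factorization of the complex Rankin--Selberg $L$-function $\Lambda(\f_\kappa^\circ\otimes\g_{K,\lambda}^\circ,s)$ when $\g_K$ is the CM Hida family attached to $K$; (ii) Hida's interpolation formula for $\mathcal L_p^{\g_K}(\f\otimes\g_K)$ recorded in Theorem~\ref{thm:Hida-interp}; and (iii) Castella's interpolation formula for $\mathcal L_{p,\pmb\xi}(\f\otimes\pmb\xi^{-1})$ from Theorem~\ref{CastellaInterpolation}, restricted to the diagonal line $\mu=\kappa$ which yields $\mathcal L_p^\dagger(\f/K)$. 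The point is that both sides of the claimed identity are rigid-analytic functions of $\kappa$ on (a wide-open subset of) $\cW_\f$; so it suffices to check equality at the Zariski-dense set of crystalline classical points $\kappa$ of weight $k+2$, and there one can compare the two explicit interpolation formulae value by value.

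First I would fix a crystalline classical point $\kappa\in\cW_\f^{\rm cris}$ with $\wt(\kappa)=k$ and unwind the left-hand side. Since $\g_K$ specializes in weight $1$ to the theta series $\theta_K=\sum_{\mathfrak a}q^{N\mathfrak a}$ attached to the trivial Hecke character of $K$ (the Eisenstein series described in \cite[\S8.1.8]{BCS}), the automorphic induction identity $L(\f_\kappa^\circ\otimes\theta_K,s)=L(\f_{\kappa,K}^\circ,s)=L(\f_\kappa^\circ/K,s)$ holds, and the evaluation at the central-type point $j=\wt(\kappa)/2$ together with $\g_{K,\lambda}$ specialized at $\lambda=1$ places us exactly at the central value $\Lambda(\f_\kappa/K,\pmb\chi_\kappa,k+1)$ appearing in Theorem~\ref{CastellaInterpolation}. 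Thus Hida's formula expresses $\mathcal L_p^{\g_K}(\f\otimes\g_K)(\kappa,1,k/2)$ as (an explicit algebraic-and-period factor)\,$\times\,\Lambda(\f_\kappa/K,\pmb\chi_\kappa,k+1)/\Omega_{\g_{K,\lambda}}$, while Castella's formula (squared, as in the displayed interpolation property of Theorem~\ref{CastellaInterpolation}) expresses $\mathcal L_p^\dagger(\f/K)(\kappa)^2$ as (another explicit algebraic-and-period factor)\,$\times\,\Lambda(\f_\kappa/K,\pmb\chi_\kappa,k+1)^2/\Omega_K^{2k+4}$, after noting that on the diagonal $\mu=\kappa$ one has $\pmb\xi_\kappa\pmb\xi_\mu^{-1}=\mathds1$ and $m$ is tied to $\kappa$. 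Dividing, the transcendental $L$-value and the $p$-adic CM periods $\Omega_p$ cancel in the ratio (one uses here that $\Omega_{\g_{K,\lambda}}$ and $\Omega_K$ agree up to an explicit algebraic factor for the CM family, as in \cite[\S2.5]{CastellaHsiehGHC}), leaving a purely algebraic meromorphic function of $\kappa$, which one then identifies with the claimed $\mathscr C_{\rm BDP}(\kappa)$.

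The identification of the constant is the bulk of the bookkeeping: one must match the Euler-type multipliers $\mathcal E_p^{\g}(\f_\kappa^\circ\otimes\g_{K,\lambda}^\circ,k/2)$ against $\mathcal E_p(\f_\kappa,\pmb\chi_\kappa\pmb\xi_\kappa\pmb\xi_\kappa^{-1},k+1)^2=\mathcal E_p(\f_\kappa,\pmb\chi_\kappa,k+1)^2$ — here the CM structure forces the relevant local term at $p$ to split as a product over the two primes $\p,\overline\p$ of $K$ above $p$, reproducing the square — and then to collect the archimedean factors $(\sqrt{-1})^{\bullet}$, the factors $2^{2k-1}(-1)^k$, $w_K$, $h_K$, $\sqrt{-D_K}$, $\pmb\xi_\kappa(\mathfrak N^{-1})$, the excellent-prime factors $\mathfrak C_{\rm exc}$, the congruence factor $\mathfrak c_{g_{\rm Eis}}$, and the Eisenstein-specific factor $(1-p^{-1})\log_p(u)$ that arises from the $p$-adic period of the weight-one Eisenstein specialization of $\g_K$ (the appearance of $u$ with $(u)=\p^{h_K}$ is the classical $p$-adic analogue of the Dirichlet class number formula / the $L$-invariant of the Katz Eisenstein series). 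Concretely I would feed Theorem~\ref{thm:Hida-interp} and Theorem~\ref{CastellaInterpolation} into the recipe of \cite[\S8]{BCS}, where exactly this type of comparison is carried out in the closely parallel setting, and read off $\mathscr C_{\rm BDP}(\kappa)$.

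The main obstacle is the precise normalization of periods: Hida's formula is normalized by the canonical period $\Omega_{\g_{K,\lambda}}$ of the CM family, whereas Castella's is normalized by powers of the CM period $\Omega_K$ (and $\Omega_p$ on the $p$-adic side), and the Eisenstein degeneration $\lambda\to1$ introduces the extra analytic factor $(1-p^{-1})\log_p(u)$ through the non-classical nature of the weight-one point. Pinning down this comparison — equivalently, controlling the congruence ideal / period of the Eisenstein family $\g_K$ at its weight-one specialization and matching it with the Katz $p$-adic $L$-function appearing implicitly on Castella's side — is where the real work lies; everything else is a finite (if lengthy) reconciliation of explicit constants, and the rigidity argument then upgrades the pointwise identity to the identity of $p$-adic $L$-functions stated in the proposition.
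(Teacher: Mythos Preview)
Your outline has a genuine gap: the specializations at which you propose to compare the two $p$-adic $L$-functions lie \emph{outside} the interpolation ranges of both Theorem~\ref{thm:Hida-interp} and Theorem~\ref{CastellaInterpolation}, so neither interpolation formula is available there. Concretely, on the Castella side the interpolation in Theorem~\ref{CastellaInterpolation} requires $\wt(\mu)=-m\in 2\ZZ_{\le 0}$, whereas on the diagonal $\mu=\kappa$ one has $\wt(\mu)=k>0$; and on the Hida side, the point $(\kappa,1,\wt(\kappa)/2)$ corresponds to the weight-$1$ Eisenstein specialization of $\g_K$, which is in the $\f$-dominant region, not the $\g_K$-dominant region $\cW^{\g_K}_{\f\g_K\bf j}$. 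Thus your sentence ``Hida's formula expresses $\mathcal L_p^{\g_K}(\f\otimes\g_K)(\kappa,1,k/2)$ as \dots'' and the companion claim about Castella's formula at $\mu=\kappa$ are both unjustified; there is no direct interpolation identity to feed into a density argument over $\kappa$.

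The paper's proof circumvents this by introducing an auxiliary anticyclotomic variable $\mu$ and working at points $(\kappa,\mu)$ that \emph{do} lie in the common interpolation range of Castella's $\mathcal L_{p,\pmb\xi}(\f\otimes\pmb\xi^{-1})$ and Hida's $\mathcal L_p^{\Phi(\Theta)}(\f\otimes\Phi(\Theta))$. A crucial intermediate step you are missing is that the mismatch of periods $\Omega_{\Phi(\Theta)_{\kappa\otimes\mu}}$ versus $\Omega_K^{2k+4+2m}$ is resolved not by a direct algebraic identity but by invoking the interpolation formula for the \emph{Katz} $p$-adic $L$-function at $\widetilde\Phi_{\kappa\otimes\mu}=\pmb\xi_\kappa^2\pmb\xi_\mu^{-2}\mathbf N_K$. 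This yields a two-variable identity of $p$-adic analytic functions in $(\kappa,\mu)$, which one then specializes to $\mu=\kappa$. At that limit $\widetilde\Phi_{\kappa\otimes\kappa}=\mathbf N_K$, and it is Katz's $p$-adic Kronecker-type formula $\mathcal L_p^{\rm Katz}(\mathbf N_K)=-\tfrac12(1-p^{-1})\log_p(u)$ (a value again outside the Katz interpolation range) that produces the factor $(1-p^{-1})\log_p(u)$ in $\mathscr C_{\rm BDP}(\kappa)$. Your allusion to ``the $p$-adic period of the weight-one Eisenstein specialization'' gestures toward this, but the actual mechanism --- routing the period comparison through the Katz $p$-adic $L$-function and then invoking its value at $\mathbf N_K$ --- is the substantive idea that your plan does not supply.
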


\begin{proof}
For classical specializations $\kappa\in \mathcal W_\f^\cl$ that are crystalline with weight ${\rm wt}(\kappa)=:k>0$, we have $\pmb\chi_\kappa(\mathfrak a) = {\rm N}_{K/\Q}(\mathfrak a)^{\frac{k}{2}}$.  For such specializations $\kappa$, and all arithmetic crystalline specializations $\mu$ with $\wt(\mu)=-m\in 2\ZZ_{\leq 0}$ (so that the infinity type of $\pmb\xi_\mu^{-1}$ is $(m/2,-m/2)$), we have
    \begin{align*}
        \begin{aligned}
        \frac{\mathcal L_{p,\pmb\xi}(\f\otimes \pmb\xi^{-1})^2(\kappa,\mu)}{\Omega_p^{2k+4+2m} }
        &=
         \frac{ 2\cdot \varepsilon(\f_\kappa)\cdot w_K^2\cdot \sqrt{D}}{\pmb\xi_\mu(\mathfrak N^{-1}) \cdot {\rm Im}(\vartheta)^{k+2+m}} \cdot \mathcal E_p(f_\kappa,\pmb\xi_\kappa\pmb\xi^{-1}_\mu,k/2)^2 \cdot\frac{\pi^{2k+2+2m}\cdot \Lambda(f_\kappa/K, \pmb\xi_\kappa\pmb\xi^{-1}_\mu,k/2)}{\Omega_K^{2k+2\ell}}\\
         &=
         \frac{ 2\cdot \varepsilon(\f_\kappa)\cdot w_K^2\cdot \sqrt{D}}{\pmb\xi_\mu(\mathfrak N^{-1})\cdot {\rm Im}(\vartheta)^{k+2+m}} \cdot \mathcal E_p(f_\kappa\otimes \Phi(\Theta)_{\kappa\otimes\mu},k-m/2)^2 \\
         &\hspace{6cm}\times\frac{\pi^{2k+2+2m}\cdot \Lambda(f_\kappa\otimes \Phi(\Theta)_{\kappa\otimes\mu},k-m/2)}{ \Omega_K^{2k+4+2m} }
         \end{aligned}
    \end{align*}
    thanks to Theorem~\ref{CastellaInterpolation}. 
    
    We recall from \cite[\S 8.11]{BCS} (see also \S8.14--\S8.16 in op. cit.; we freely use the notation therein without further indication) that 
    $\Phi$ is a two-parameter family of characters of $G_K$ whose specialization to $\kappa\otimes\mu$ is given by $\pmb\xi_\kappa\pmb\xi_\mu^{-1} \mathbf{N}_K^{ \frac{\mathrm{wt}(\kappa)}{2}-\frac{\mathrm{wt}(\mu)}{2}}$, which are the $p$-adic avatars of Hecke characters with infinity type $(k+m,0)$. In particular, we remark that the modular form $\Phi(\Theta)_{\kappa\otimes\kappa}$ is the unique $p$-stabilization of the Eisenstein series ${\rm Eis}_1(\varepsilon_K)$. Applying Theorem~\ref{thm:Hida-interp} with $\g=\Phi(\Theta)$, we infer that
    \begin{equation} \label{ComparisonBDP1}
        \begin{aligned}
        \frac{\mathcal L_{p,\pmb\xi}(\f\otimes \pmb\xi^{-1})^2(\kappa,\mu)}{\Omega_p^{2k+4+2m} }
        = \,&
        \frac{\pmb\xi^{-1}_\mu(\mathfrak N^{-1})\cdot 2\cdot \varepsilon(\f_\kappa)\cdot w_K^2\cdot \sqrt{D}}{{\rm Im}(\vartheta)^{k+2+m} \cdot \mathfrak C_\mathrm{exc}(\f\otimes \Phi(\Theta)) (\sqrt{-1})^{m-k+3 
        } } 
        \cdot
         \frac{\pi^{2k+4+2m}\cdot \Omega_{\Phi(\Theta)_{\kappa\otimes\mu}} }{\Omega_K^{2k+4+2m}}\\
         &\hspace{4 cm} \times \mathcal L_p^{\Phi(\Theta) }(\f\otimes\Phi(\Theta))(\kappa, \kappa\otimes\mu, k-m/2) \, ,
         \end{aligned}
    \end{equation}
    where we recall that $\Omega_{\Phi(\Theta)_{\kappa\otimes\mu}}$ is the modified Hida period given as in \cite[\S. 3.2.1]{BCS}. 
    
    Using \cite[Equation (8.13)]{BCS}, we may calculate
    \begin{align}
    \begin{aligned}
\label{eqn_2024_03_04_1655}
         \frac{\pi^{2k+4+2m}\cdot \Omega_{\Phi(\Theta)_{\kappa\otimes\lambda}} }{\Omega_K^{2k+4+2m}} =  \frac{(-2\sqrt{-1})^{k+m+4}}{\mathfrak c_{\Phi(\Theta)(\kappa\otimes\mu)}} \,\cdot\,&  \mathscr O(\Phi_{\kappa\otimes\mu}) \cdot \frac{\pi^{k+m+1}\cdot (k+2+m)!}{2^{2k+2m+5}}\\
        &\times \mathcal E_p(\Phi(\Theta)_{\kappa\otimes\mu},{\rm ad}) \cdot \frac{L_{\mathbf{N}_{K/\Q}(\mathfrak f_{\Phi_{\kappa\otimes\mu}})}(\Phi_{\kappa\otimes\mu}\Phi_{\kappa\otimes\mu}^{c,-1},1)}{\Omega_K^{2k+4+2m}} \,.
    \end{aligned}
    \end{align} 
    
    On plugging $\widetilde{\Phi}_{\kappa\otimes\mu} = \Phi_{\kappa\otimes\mu}\Phi_{\kappa\otimes\mu}^{c,-1}\mathbf{N}_K = \pmb\xi_\kappa^2\pmb\xi_\lambda^{-2} \mathbf{N}_K$ (which is the $p$-adic avatar of a Hecke character with infinity type $(k+2+m+1,-k-2-m+1)$) in \cite[Eqn. (3-1)]{BDP2}, we also have that
    \begin{equation}
        \left(\frac{\sqrt{D_K}}{2\pi}\right)^{k+2+m-1} \frac{\mathcal L_p^{\rm Katz}(\widetilde{\Phi}_{\kappa\otimes\lambda})}{\Omega_p^{2k+4+2m}} = \mathcal E_p(\Phi(\Theta)_{\kappa\otimes\mu},{\rm ad})  \cdot (k+2+m)! \cdot \frac{L(\widetilde{\Phi}_{\kappa\otimes\mu}^{-1},0)}{\Omega_K^{2k+4+2m}}\,,
       \end{equation} 
    which, combined with \eqref{eqn_2024_03_04_1655}, permits us to conclude
    \[
    \frac{\pi^{2k+4+2m}\cdot \Omega_{\Phi(\Theta)_{\kappa\otimes\mu}} }{\Omega_K^{2k+4+2m}} =  \frac{(-2\sqrt{-1})^{k+2+m+2}}{\mathfrak c_{\Phi(\Theta)(\kappa\otimes\mu)}} \cdot  \mathscr O(\Phi_{\kappa\otimes\mu}) \cdot \frac{(\sqrt{D_K})^{k+2+\ell-1}}{2^{3k+6+3m}}\cdot \frac{\mathcal L_p^{\rm Katz}(\widetilde{\Phi}_{\kappa\otimes\mu})}{\Omega_p^{2k+4+2m}}.
    \]
    Substituting this identity in Equation \eqref{ComparisonBDP1}, we deduce that
    \begin{equation} 
        \begin{aligned}
        \mathcal L_{p,\pmb\xi}(\f\otimes \pmb\xi^{-1})^2(\kappa,\mu)
        &=
        \mathscr C_{\rm BDP}(\kappa,\mu)^{-1} \cdot \mathcal L_p^{\Phi(\Theta) }(\f\otimes\Phi(\Theta))(\kappa, \kappa\otimes\mu, k-m/2)\,,
         \end{aligned}
    \end{equation}
    where
        \begin{align*}
            \begin{aligned}
        \mathscr C_{\rm BDP}(\kappa,\mu) &= 
        \frac{
        {\rm Im}(\vartheta)^{k+2+m} \cdot \mathfrak C_\mathrm{exc}(\f\otimes \Phi(\Theta)) (\sqrt{-1})^{\ell-k-2+1 
        }  
        }{
        \pmb\xi^{-1}_\mu(\mathfrak N^{-1})\cdot 2\cdot \varepsilon(\f_\kappa)\cdot w_K^2\cdot \sqrt{D} \cdot \mathscr O(\Phi_{\kappa\otimes\mu})
        }
        \frac{
        \mathfrak c_{\Phi(\Theta)(\kappa\otimes\mu)}
        }{
        (-2\sqrt{-1})^{k+2+m+2}
        } \\
        &\hspace{6 cm}\times
        \frac{
        2^{3k+6+3m}
        }{
        (\sqrt{D_K})^{k+2+m-1}
        } \cdot\frac{1}{\mathcal L_p^{\rm Katz}(\widetilde{\Phi}_{\kappa\otimes\mu})} \\
        &
        = \frac{2^{k+2+m-4} (-1)^{k+2+m}\cdot(\sqrt{-1})^{-2k-4-1} \cdot \mathfrak C_\mathrm{exc}(\f\otimes \Phi(\Theta))  \cdot \mathfrak c_{g_{\rm Eis}} }{\omega_K h_K \sqrt{D_K} \cdot \varepsilon(\f_\kappa)\cdot \pmb\xi_\lambda(\mathfrak N^{-1}) \cdot \mathcal L_p^{\rm Katz}(\widetilde{\Phi}_{\kappa\otimes\mu})}\,.
        \\
        \end{aligned}
        \end{align*}
        
    Using the fact that $\mathcal L_p^{\rm Katz}(\mathbf{N}_K)=-\frac{1}{2}(1-p^{-1})\log_p(u)$ (cf. \cite[\S10.4]{Katz76}; see also \cite[p. 90]{Gross1980Factorization} where $u$ corresponds to the inverse of Gross'\, $\overline{\alpha}$), the proof follows on passing to limit $\mu\to \kappa$ (so that $\widetilde{\Phi}_{\kappa\otimes\mu} \to \mathbf{N}_K$).
\end{proof}

\subsubsection{Reformulation of Conjecture~\ref{conj_main_6_plus_2} in terms of big Heegner points}
\label{subsubsec_222_2023_11_16_1152}

The goal of this subsection is to reformulate Conjecture~\ref{conj_main_6_plus_2} in terms of twisted Heegner points instead of the Beilinson--Kato elements, which is more suitable for the approach we describe in the present article to prove this conjecture. Note that it is implicit in our conjecture that we assume the validity of the hypothesis \eqref{item_deg6}.

\begin{conj}
\label{conj_main_6_plus_2_bis_compact}
Suppose that $\varepsilon(\hf)=-1=\varepsilon^{\rm bal}(\hf\otimes\g\otimes\g^c)$. We have the following factorization of $p$-adic $L$-functions:
\begin{equation}
\label{eqn_conj_main_6_plus_2_bis_compact}
     \cL_p^\hg(\hf\otimes\hg\otimes\hg^c)^2_{\vert_{\cW_2}} =  \mathscr{D}\cdot \cL_p^\Ad(\hf\otimes \Ad^0\hg) \cdot \mathcal L_p^\dagger(\f/K)^2\,.
\end{equation}
        Here, $\mathscr D\in {{\rm Frac}(\cR)}$ is a meromorphic function in 2 variables, with an explicit algebraicity property at crystalline specializations (cf. \eqref{eqn_2024_02_04_1756} below).
\end{conj}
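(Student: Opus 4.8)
The plan is to show that Conjecture~\ref{conj_main_6_plus_2_bis_compact} is equivalent to Conjecture~\ref{conj_main_6_plus_2}; the bridge is a comparison, in the single variable $\kappa\in\cW_\f$, between the big Beilinson--Kato logarithm and the squared BDP-type $p$-adic $L$-function, of the shape
\[
\mathcal L_p^\dagger(\f/K)^2 \;=\; \mathscr E\cdot{\rm Log}_{\omega_\f}({\rm BK}_\f^\dagger),
\]
with $\mathscr E$ a meromorphic function of $\kappa$ that is explicit up to the Katz $p$-adic $L$-function $\mathcal L_p^{\rm Katz}(\mathbf N_K)=-\tfrac12(1-p^{-1})\log_p(u)$ of $K$ and that takes algebraic values at crystalline classical specializations. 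Granting this, \eqref{eqn_conj_main_6_plus_2_bis_compact} is exactly \eqref{eqn_conj_main_6_plus_2} with $\mathscr D:=\mathscr E^{-1}\mathscr C$, whose algebraicity at crystalline points follows from that of $\mathscr C$ (cf.\ \cite[Theorem~8.11]{BCS}) together with that of $\mathscr E$.

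To establish the displayed comparison I would argue by rigidity: both sides depend only on $\f$, and a meromorphic function on $\cW_\f$ that vanishes at infinitely many classical points of weight $2$ with wild character of conductor $p^{s_\kappa}>1$ and $\f_\kappa$ new of level $\Gamma_0(N)\cap\Gamma_1(p^{s_\kappa})$ vanishes identically, so it suffices to check equality at each such $\kappa$. Fix one. On the one hand, Corollary~\ref{SpecializationBDP} and Lemma~\ref{lemma_2023_11_18_1624} express $\mathcal L_p^\dagger(\f/K)(\kappa)^2$, up to an explicit $p$-power, an index and an epsilon-factor, in terms of $\log_{\omega_{\f_\kappa}}$ of (a twist of) the big Heegner point; while Proposition~\ref{propositionHidaBDP} rewrites the same quantity, up to $\mathscr C_{\rm BDP}(\kappa)$, as the value $\mathcal L_p^{\g_K}(\f\otimes\g_K)(\kappa,1,{\rm wt}(\kappa)/2)$ of Hida's Rankin--Selberg $p$-adic $L$-function --- and since the weight-one member of $\g_K$ is the Eisenstein series ${\rm Eis}_1(\varepsilon_K)$, whose Galois representation is the reducible $\mathds 1\oplus\varepsilon_K$, the Artin formalism for that Rankin--Selberg $p$-adic $L$-function factors this value as a product of a $p$-adic $L$-value of $\f_\kappa$ and one of $\f_\kappa\otimes\varepsilon_K$. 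Because $(K,\mathfrak N)$ satisfies the Heegner hypothesis and $\varepsilon(\f_\kappa)=-1$, we have $\varepsilon(\f_\kappa\otimes\varepsilon_K)=+1$, so the $\f_\kappa\otimes\varepsilon_K$-factor is (up to an explicit $p$-adic multiplier and an algebraic period ratio) the nonzero algebraic number $L(\f_\kappa\otimes\varepsilon_K,1)$; on the other hand, since $L(\f_\kappa,1)=0$ the class $\res_p({\rm BK}_{\f_\kappa}^\dagger)$ is Bloch--Kato-finite, and Kato's explicit reciprocity law together with the $p$-adic Gross--Zagier formula of Perrin-Riou type identify the $\f_\kappa$-factor --- a $p$-adic avatar of the central derivative $L'(\f_\kappa,1)$ --- with the weight-$\kappa$ value of ${\rm Log}_{\omega_\f}({\rm BK}_\f^\dagger)$ (this is the content of the justification in \cite[\S7.2.9]{BCS}). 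Matching the two descriptions at each $\kappa$, and noting that the complex Gross--Zagier factorization $L'(\f_\kappa/K,1)=L'(\f_\kappa,1)\cdot L(\f_\kappa\otimes\varepsilon_K,1)$ makes the resulting ratio algebraic, produces the displayed identity.

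I expect the main obstacle to be twofold. First, the uniform-in-$\kappa$ bookkeeping of the transcendental periods --- matching the Hida periods $\Omega_{\f_\kappa}$, the CM periods $\Omega_K,\Omega_p$ of Theorem~\ref{CastellaInterpolation}, and the N\'eron period of $\f_\kappa$ against one another --- so that the constants produced pointwise are recognized as the specializations of a single meromorphic $\mathscr E$; this is legitimate precisely because Ochiai's class ${\rm BK}_\f^\dagger$, Howard's big Heegner point $\mathfrak z$, and Castella's $\mathcal L_{p,\pmb\xi}(\f\otimes\pmb\xi^{-1})$ interpolate their weight-two avatars, but upgrading an equality on the classical locus to a meromorphic-function identity is delicate. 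Second, one must pin down the precise shape of $\mathscr E$ --- with $\mathcal L_p^{\rm Katz}(\mathbf N_K)$ entering as in the proof of Proposition~\ref{propositionHidaBDP} --- in order to read off the stated algebraicity of $\mathscr D$ at crystalline specializations.
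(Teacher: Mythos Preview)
Your overall strategy matches the paper's: one shows that Conjecture~\ref{conj_main_6_plus_2_bis_compact} is equivalent to Conjecture~\ref{conj_main_6_plus_2} by producing a one–variable identity on $\cW_\f$ of the shape $\mathcal L_p^\dagger(\f/K)^2=\mathscr E\cdot{\rm Log}_{\omega_\f}({\rm BK}_\f^\dagger)$, and then sets $\mathscr D=\mathscr E^{-1}\mathscr C$. The paper does exactly this in \S\ref{subsubsec_228_2024_03_06_1611}.

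Where you diverge is in how the comparison $\mathscr E$ is produced. The paper is much more direct: it simply combines Proposition~\ref{propositionHidaBDP} (which you already cite) with \cite[Proposition~8.9]{BCS}. The latter expresses ${\rm Log}_{\omega_\f}({\rm BK}_\f^\dagger)$ as $\mathscr C_{\rm Hida}^{-1}\cdot\cL_p^{\rm Kit}(\f\otimes\epsilon_K)(\kappa,\tfrac{{\rm wt}(\kappa)}{2}+1)^{-1}$ times the same Hida Rankin--Selberg value $\cL_p^{\g_K}(\f\otimes\g_K)(\kappa,1,{\rm wt}(\kappa)/2)$ that appears in Proposition~\ref{propositionHidaBDP}. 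Equating the two descriptions of that common value immediately yields \eqref{eqn_2024_03_02_2220} and hence \eqref{eqn_2024_02_04_1756}, as an identity of meromorphic functions on $\cW_\f$; no pointwise verification is needed, and the period bookkeeping you flag as the main obstacle evaporates.

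Your proposed route---checking at weight-$2$ points with nontrivial wild character---introduces genuine difficulties rather than resolving them. The interpolation formulae for both $\mathcal L_{p,\pmb\xi}$ (Theorem~\ref{CastellaInterpolation}) and Hida's $\cL_p^{\g_K}$ (Theorem~\ref{thm:Hida-interp}) are stated at \emph{crystalline} classical points, so at wild-character points you have no direct handle on complex $L$-values and the period comparison becomes much harder, not easier. Your appeal to ``Kato's explicit reciprocity law together with the $p$-adic Gross--Zagier formula of Perrin-Riou type'' also conflates two things: Kato's reciprocity alone identifies ${\rm Log}_{\omega_\f}({\rm BK}_\f^\dagger)$ with the Mazur--Kitagawa $p$-adic $L$-function restricted to the central line (this is precisely what underlies \cite[Proposition~8.9]{BCS}), and no derivative formula of Perrin-Riou type is needed here. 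Finally, Corollary~\ref{SpecializationBDP} and Lemma~\ref{lemma_2023_11_18_1624} play no role in this equivalence---they enter only later in the paper, when one attempts to prove Conjecture~\ref{conj_main_6_plus_2_bis_compact} outright via the Heegner-point comparison.
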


Equivalently, in light of Corollary~\ref{SpecializationBDP}, we can rephrase Conjecture~\ref{conj_main_6_plus_2_bis_compact} as follows:

\begin{conj}
\label{conj_main_6_plus_2_bis}
Suppose that $\varepsilon(\hf)=-1=\varepsilon^{\rm bal}(\hf\otimes\g\otimes\g^c)$. Then,  
\begin{equation}
\label{eqn_conj_main_6_plus_2_bis}
     \cL_p^\hg(\hf\otimes\hg\otimes\hg^c)^2(\kappa,\lambda,\lambda) 
     =  \mathscr{D}(\kappa,\lambda)\cdot \cL_p^\Ad(\hf\otimes \Ad^0\hg)(\kappa,\lambda) \cdot \varepsilon(\pmb\xi_{\kappa,\p})^2\,p^{2-2s_\kappa}[L_{s_\kappa}:H_{p^{s_\kappa}}]^{-2} \cdot \log_{\omega_{\f_\kappa}}(Q_\kappa)^2\,,
\end{equation}
for all specializations $\kappa$ (resp. $\lambda$) of $\cR_\f$ (resp. of $\cR_\g$) of weight $2$ and non-trivial wild character $\psi_\kappa$ (resp. $\psi_\lambda$) of conductor $p^{s_\kappa}$ (resp. $p^{s_\lambda}$) for which $f_\kappa$ and $g_\lambda$ are newforms, where $\mathscr D\in {{\rm Frac}(\cR)}$ is  as in Conjecture~\ref{conj_main_6_plus_2_bis_compact}.
\end{conj}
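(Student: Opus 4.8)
The plan is to prove \eqref{eqn_conj_main_6_plus_2_bis}, granting \eqref{item_deg6} and \eqref{item_GKS}, by reducing it to a pointwise identity on a Zariski-dense set of specializations and then matching the two sides there through a combination of the explicit reciprocity laws recalled above with a Gross--Zagier/Gross--Kudla comparison of Chow--Heegner points and twisted Heegner points (the Heegner-point reformulation is the natural target here, since the comparison with the diagonal cycle passes through Gross--Zagier rather than through Kato's Euler system). Both sides of the companion factorization \eqref{eqn_conj_main_6_plus_2_bis_compact} lie in $\Frac(\cR)$, so it suffices to check the equality after evaluating at the points $(\kappa,\lambda)$ appearing in Conjecture~\ref{conj_main_6_plus_2_bis}; since the wild characters $\psi_\kappa,\psi_\lambda$ may be taken of arbitrarily large $p$-power order, these points are Zariski-dense in $\cW_2$, and $\mathscr D$ is then pinned down as the unique meromorphic interpolation of the pointwise constants produced below.

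First, one reinterprets the left-hand side as a $p$-adic Abel--Jacobi image. On the balanced region $\cW_2\hookrightarrow\cW_3$ the $\g$-dominant triple product $p$-adic $L$-function has empty range of interpolation; but by the balanced case of the explicit reciprocity law for diagonal cycles (Bertolini--Seveso--Venerucci \cite{BSV}), $\cL_p^\hg(\hf\otimes\hg\otimes\hg^c)^2_{\vert_{\cW_2}}$ is identified, up to the explicit unit and Euler-type factors recorded there, with (a power of) the $\g$-logarithm ${\rm Log}^{(\g),\dagger}_{\omega^{(\g)}}\bigl(\res_p(\kappa^{\rm GKS})\bigr)$ of the big diagonal (Gross--Kudla--Schoen) class $\kappa^{\rm GKS}\in H^1_{\rm bal}(\QQ,T^\dagger)$. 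Specializing at $(\kappa,\lambda,\lambda)$ and unwinding the definition of ${\rm Log}^{(\g),\dagger}_{\omega^{(\g)}}$, the value $\cL_p^\hg(\hf\otimes\hg\otimes\hg^c)^2(\kappa,\lambda,\lambda)$ becomes, up to explicit constants, the $p$-adic Abel--Jacobi image of the Gross--Kudla--Schoen cycle $\Delta_{f_\kappa,g_\lambda,g_\lambda^c}$ in the second Chow group of the relevant triple product of modular curves, contracted against $\omega_{f_\kappa}\otimes\eta_{g_\lambda}\otimes\omega_{g_\lambda^c}$.

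Next one descends to the $\f$-part. Using the splitting of ${\rm ad}(T_\g)$ into its scalar summand and trace-zero part $\Ad^0(T_\g)$, together with the commutative diagram \eqref{eqn_2023_09_27_1233}, the $\g$-logarithm of $\kappa^{\rm GKS}$ depends only on the image $({\rm id}\otimes{\rm tr})(\kappa^{\rm GKS})\in H^1_{\rm f}(\QQ,T_\f^\dagger)\widehat\otimes\cR_\g$ and equals ${\rm Log}_{\omega_\f}^\dagger$ of it. The point of \eqref{item_GKS} --- the extension of the Gross--Kudla height formula --- combined with the Gross--Zagier formula for $\f_\kappa/K$ is that this image is, up to an explicit algebraic constant, the big Heegner class $\mathfrak{z}$ (equivalently, by Lemma~\ref{lemma_2023_11_18_1624}, the twisted Heegner point $Q_\kappa$) multiplied by the appropriate power of the adjoint $p$-adic $L$-value $\cL_p^\Ad(\hf\otimes\Ad^0\hg)(\kappa,\lambda)$ furnished by \eqref{item_deg6}; conceptually this is the $p$-adic avatar of the Artin-formalism identity $L'(f_\kappa\otimes g_\lambda\otimes g_\lambda^c)\doteq L(f_\kappa\otimes\Ad^0 g_\lambda)\cdot L'(f_\kappa)$ at the center --- the first factor governed by Hsieh's unbalanced construction \cite{Hsieh} through \eqref{item_deg6}, the second by Gross--Zagier --- with the adjoint value entering as the ``norm'' of the K\"unneth projector used to produce the Chow--Heegner point from $\Delta_{f_\kappa,g_\lambda,g_\lambda^c}$, and with the squares in \eqref{eqn_conj_main_6_plus_2_bis} organized exactly as in the CM case treated in \cite[\S8]{BCS}. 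Feeding in Corollary~\ref{SpecializationBDP}, which rewrites $\log_{\omega_{\f_\kappa}}(Q_\kappa)$ in terms of $\mathcal L_p^\dagger(\f/K)(\kappa)$, and the interpolation formula for $\cL_p^\Ad$ of \eqref{item_deg6}, one obtains \eqref{eqn_conj_main_6_plus_2_bis} at each such $(\kappa,\lambda)$ up to a factor with the algebraicity property at crystalline specializations demanded of $\mathscr D$ in Conjecture~\ref{conj_main_6_plus_2_bis} (its explicit shape following from Theorem~8.11 of \cite{BCS}); Zariski-density then upgrades these pointwise identities to \eqref{eqn_conj_main_6_plus_2_bis_compact} with a single $\mathscr D\in\Frac(\cR)$. (The imaginary quadratic field $K$, with $\cO_K/\mathfrak N\simeq\ZZ/N\ZZ$, is chosen so that the relevant central values are generically non-zero, which can be arranged by a standard choice of $K$.)

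The hard part will be \eqref{item_GKS}: one needs the Gross--Kudla height formula for the triples $(f_\kappa,g_\lambda,g_\lambda^c)$ in precisely the generality occurring here --- weight $2$ but with deep wild level $p^{s_\kappa}$ at $p$, with $s_\kappa\to\infty$, and with $g_\lambda^c$ the conjugate family --- which falls outside the range covered by \cite{YZZ10,YZZ12,YZZ23}; and, beyond establishing such a formula, one must control the comparison between the Chow--Heegner point attached to $\Delta_{f_\kappa,g_\lambda,g_\lambda^c}$ and Howard's point $Q_\kappa$ \emph{uniformly in $\kappa$}, so that the individual constants assemble into a single meromorphic $\mathscr D$ rather than an unrelated collection of numbers. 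Tracking the periods and the local $p$-adic multipliers along the tower $\{L_{s_\kappa}\}$ in this Gross--Zagier computation is the principal technical obstacle; once \eqref{item_deg6} and \eqref{item_GKS} are available, the remaining steps amount to bookkeeping with the reciprocity laws already recalled in this section.
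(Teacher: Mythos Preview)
Your overall strategy matches the paper's approach closely: both proceed by (i) invoking the balanced reciprocity law of \cite{BSV,DarmonRotger} to rewrite $\cL_p^\hg(\hf\otimes\hg\otimes\hg^c)^2_{|_{\cW_2}}$ as ${\rm Log}^{(\g),\dagger}_{\omega^{(\g)}}$ of the big diagonal class, (ii) using the diagram \eqref{eqn_2023_09_27_1233} to reduce to ${\rm Log}^\dagger_{\omega_\f}$ of the trace projection, (iii) comparing the resulting class with $Q_\kappa$ via \eqref{item_GKS} and Howard's twisted Gross--Zagier formula, and (iv) invoking density in $\cW_2$.

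However, two genuine intermediate ingredients are compressed to the point of being invisible in your sketch, and both are essential. First, the passage from the \emph{\'etale} trace class $\Delta_r^{({\rm tr})}(\kappa,\lambda)$ to the \emph{geometric} Chow--Heegner point $P_r^\circ(\kappa,\lambda)$ is nontrivial: the paper devotes \S6 to a de Rham/\'etale compatibility (Proposition~\ref{prop_main_comparison_Sec5} and Corollary~\ref{cor_prop_main_comparison_Sec5}) showing that $\log_{\omega_f}$ of these two objects agree up to the explicit factor $a_p(g)^r$. Without this, you cannot apply the height formula of \eqref{item_GKS}, which is stated for cycles, to the cohomology class $({\rm id}\otimes{\rm tr})(\kappa^{\rm GKS})$.

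Second, and more seriously, the Gross--Kudla and Gross--Zagier formulae only compare the N\'eron--Tate \emph{heights} of $P_r^\circ(\kappa,\lambda)$ and $Q_\kappa$; to deduce that the points themselves are proportional (up to sign), the paper uses Howard's result that the big Heegner point is non-torsion together with a control theorem to show that $H^1_{\rm f}(L_r,T_{\f_\kappa}[1/p])^{\psi_\kappa^{1/2}}$ is one-dimensional over $E_\kappa$ for all but finitely many $\kappa$ (see \S\ref{subsubsec_2024_03_06_1703} and the proof of Theorem~\ref{thm_main_sec_7_2024_03_06}). Your phrase ``this image is, up to an explicit algebraic constant, the big Heegner class $\mathfrak z$'' presupposes exactly this step without naming it; absent the rank-one input, a height identity does not yield a point identity, and the argument would stall at comparing norms rather than classes.
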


\subsubsection{} 
\label{subsubsec_228_2024_03_06_1611}
Let us briefly explain that Conjecture~\ref{conj_main_6_plus_2_bis_compact}  is indeed equivalent to Conjecture~\ref{conj_main_6_plus_2}. We recall that  Conjecture~\ref{conj_main_6_plus_2} asserts that
\begin{equation}   
\cL_p^\hg(\hf\otimes\hg\otimes\hg^c)^2_{\vert_{\cW_2}}= \mathscr C\cdot \cL_p^\Ad(\hf\otimes \Ad^0\hg) \cdot {\rm Log}_{\omega_\f}({\rm BK}_{\f}^\dagger)
\end{equation}
 under the assumption that $\varepsilon(\hf)=-1=\varepsilon^{\rm bal}(\hf\otimes\g\otimes\g^c)$, where $\mathscr C\in {\rm Frac}(\cR)$ is a meromorphic function in 2 variables with an explicit algebraicity property at crystalline specializations $(\kappa,\lambda)$ (cf. \cite{BCS}, Theorem 8.11). 
Combining Proposition~\ref{propositionHidaBDP} with \cite[Proposition 8.9 ]{BCS}, we deduce that
    \begin{equation} \label{eqn_2024_03_02_2220}
        {\rm Log}_{\omega_\f}({\rm BK}_\f^\dagger(\kappa)) = \frac{\mathscr C_{\rm BDP}(\kappa)}{\mathscr C_{\rm Hida}(\kappa) } \frac{ \cL_p^\dagger(\f/K)(\kappa)^2}{\cL_p^{\rm Kit}(\f\otimes\epsilon_K)(\kappa, \rmw(\kappa)/2+1)}\,.
    \end{equation}
    Here, we recall from op. cit. that 
    \begin{equation}
    \label{eqn_2022_12_21_1139}
        \mathscr C_{\rm Hida} = \frac{\mathfrak c_{g_{\rm Eis}}\cdot \eta_{g_{\rm Eis}}( v_{g_{\rm Eis}}^-)}{ 2\, \omega_{g_{\rm Eis}}(v_{g_{\rm Eis}}^+)} \cdot \frac{\mathscr N_{\hf,c}^\dagger}{\mathscr N_{g_{\rm Eis},c}^\dagger} \cdot \mathcal C_1 
        =
        \frac{\mathfrak c_{g_{\rm Eis}}\cdot \eta_{g_{\rm Eis}}( v_{g_{\rm Eis}}^-)}{ 2\, \omega_{g_{\rm Eis}}(v_{g_{\rm Eis}}^+)} \cdot \frac{w_\f}{w_{g_{\rm Eis}} } \cdot \mathcal C_1 
    \end{equation}
and $\mathcal C_1\in \cR_\hf[\frac{1}{p}]^\times$ verifies
        \begin{equation}
        \label{eqn_2022_12_21_1140}
        \mathcal C_1(\kappa)=\frac{ \mathfrak C_{\rm exc}(\hf\otimes\hg_K)}{(-2\sqrt{-1})^{\wt(\kappa)+1} \cdot C_{\hf_\kappa}^+C_{\hf_\kappa}^-\cdot \mathcal E(\hf_\kappa^\circ,\Ad)}
        \end{equation}
        for all $\kappa\in \cW_\f^{\rm cl}$\,. Applying now the formula \eqref{eqn_2024_03_02_2220} with Conjecture \ref{conj_main_6_plus_2}, we conclude that
\begin{equation} 
\begin{aligned}
\cL_p^\hg(\hf\otimes\hg\otimes\hg^c)^2_{\vert_{\cW_2}} (\kappa,\lambda,\lambda) 
&= \mathscr D(\kappa,\lambda) \cdot \cL_p^\Ad(\hf\otimes \Ad^0\hg) (\kappa,\lambda) \cdot 
\mathcal L_p^\dagger(\f/K)^2
\end{aligned}
\end{equation}
where
\begin{equation}
    \label{eqn_2024_02_04_1756}
     \mathscr D(\kappa,\lambda) = \frac{ \mathscr C(\kappa,\lambda)\cdot \mathscr C_\mathrm{BDP}(\kappa)}{\mathscr C_{\rm Hida}(\kappa) \cdot \cL_p^{\rm Kit}(\f\otimes\epsilon_K)(\kappa, \rmw(\kappa)/2+1)}.
\end{equation}
   
Note that all the terms that one needs in order to compare $\mathscr D (\kappa,\lambda)$ with $\mathscr C(\kappa,\lambda)$ have been made explicit, and the crystalline specializations of $\mathscr C$ have been explicated in \cite[Theorem 8.11]{BCS}.

\section{First reduction step: Weight-2 specializations}
\label{sec_3_2023_09_08_1159}
Our goal in this section is to explain the first step towards the proof of Conjecture~\ref{conj_main_6_plus_2_bis}, which reduces its statement to a comparison of the specializations of diagonal cycles with Heegner points.

\subsection{Big diagonal cycles}
We recall our notation for the self-dual Galois twists $T_{\f}^{\dagger} := T_\f\otimes\bbchi_{\f}^{-\frac{1}{2}}$, and $T^{\dagger} := T\otimes \bbchi_{\f}^{-\frac{1}{2}}= T_\f^\dagger\widehat\otimes {\rm ad}(T_\g)$. We also recall the balanced Selmer group $H^1_{\rm bal}(\QQ,T^\dagger)$ determined by the balanced local conditions at $p$, given by the $G_{\QQ_p}$-stable submodule $F_{\rm bal}T^\dagger$ given as in \eqref{eqn_2024_07_09_1145}. More precisely, we put
$$H^1_{\rm bal}(\QQ,T^\dagger):=\ker\left(H^1(\QQ,T^\dagger)\xrightarrow{(\res_\ell)_\ell} \frac{H^1(\QQ_p,T^\dagger)}{{\rm im}\left(H^1(\QQ_p,F_{\rm bal}T^\dagger)\to H^1(\QQ_p,T^\dagger) \right)}\times \prod_{\ell\neq p} H^1(\QQ_\ell^{\rm ur},T^\dagger) \right)\,.$$

There is a canonical element   
$$\Delta^{\etale}(\f\otimes{\rm ad}(\g))\in H^1_{\rm bal}(\QQ,T^{\dagger})\,,$$
which we call the \emph{big diagonal cycle} and is constructed as in \cite{BSV,DarmonRotger}, which interpolates the Abel--Jacobi images of the Gross--Kudla--Schoen cycles in an appropriate sense (we will discuss its construction in \S\ref{subsec_big_diagonal} below). As we have explained in \S\ref{subsubsec_2023_10_02_1712}, we will primarily follow the construction in \cite{DarmonRotger}, as it fits better with our purposes, because it allows us to work with specializations with wild characters of arbitrarily high order.

\subsubsection{Reciprocity laws for big diagonal cycles} 
Let us put $F_{\g}T^\dagger:=T_\f^\dagger\widehat{\otimes}_{\ZZ_p}(F^+T_\g\otimes_{\cR_\g}T_{\g^c})(\bbchi_{\g}^{-1}\chi_\cyc^{-1})$. Observe then that
$$F_{\g}T\cap F_{\rm bal}T=\left(F^+T_\f^\dagger\widehat{\otimes}_{\ZZ_p}(F^+T_\g\otimes_{\cR_\g}T_{\g^c})+T_\f^\dagger\widehat{\otimes}_{\ZZ_p}(F^+T_\g\otimes_{\cR_\g}F^+T_{\g^c})\right)(\bbchi_{\g}^{-1}\chi_\cyc^{-1})\,,$$
and that
$$F_{\rm bal}T^\dagger/F_{\g}T^\dagger\cap F_{\rm bal}T^\dagger\xrightarrow{\,\,\sim\,\,} F^+T_\f^\dagger\widehat{\otimes}_{\ZZ_p}(F^-T_\g\otimes_{\cR_\g} F^+T_{\g^c})(  \bbchi_{\g}^{-1}\chi_\cyc^{-1})\,.$$
As a result, we have a natural morphism
\begin{align}
    \label{eqn_2023_10_02_1728}
    \begin{aligned}
        \res_p^{(\g)}\,:\, H^1_{\rm bal}(\QQ,T^\dagger)\xrightarrow{\res_p} H^1(&\QQ_p,F_{\rm bal}T^\dagger) \\
        &\lra H^1(\QQ_p,F^+T_\f^\dagger\widehat{\otimes}_{\ZZ_p}(F^-T_\g\otimes_{\cR_\g} F^+T_{\g^c})(  \bbchi_{\g}^{-1}\chi_\cyc^{-1}))
    \end{aligned}
\end{align}
landing in the domain $H^1(\QQ_p,F^+T_\f^\dagger\widehat{\otimes}_{\ZZ_p}(F^-T_\g\otimes_{\cR_\g} F^+T_{\g^c})(  \bbchi_{\g}^{-1}\chi_\cyc^{-1}))$ of the Perrin-Riou map ${\rm Log}_{\omega^{(\g)}}^{(\g),\dagger}$, that takes values in $\frac{1}{H_\g}\cR$; see Equation~\eqref{eqn_2024_07_02_1246}.

\begin{theorem}[Bertolini--Seveso--Venerucci, Darmon--Rotger]
\label{thm_reciprocity_big_diag}   
We have 
\begin{equation}
\label{eqn_2023_10_03_1126}
    {\rm Log}_{\omega^{(\g)}}^{(\g),\dagger}\left(\res_p^{(\g)}\left(\Delta^{\etale}(\f\otimes{\rm ad}(\g))\right)\right)=\cL_p^\hg(\hf\otimes\hg\otimes\hg^c)_{\vert_{\cW_2}}\,.
\end{equation}
\end{theorem}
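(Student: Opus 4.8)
The plan is to deduce this statement from the explicit reciprocity law of Bertolini--Seveso--Venerucci \cite{BSV} (or Darmon--Rotger \cite{DarmonRotger} in the formulation with wild characters of arbitrarily high order), combined with the compatibility of the various Perrin-Riou-type maps and Poincar\'e duality pairings recorded in \S\ref{subsec_212_2023_09_27_1232}--\S\ref{subsubsec_228_2024_03_06_1611}. First I would recall the construction of the big diagonal cycle $\Delta^{\etale}(\f\otimes {\rm ad}(\g))$ as the image under the $\cR$-linear Abel--Jacobi map of the $\Lambda$-adic Gross--Kudla--Schoen cycle living in the middle cohomology of the triple fibre product of the Hida-theoretic modular curve towers; the class lands in $H^1_{\rm bal}(\QQ, T^\dagger)$ precisely because the balanced local condition $F_{\rm bal}T^\dagger$ of \eqref{eqn_2024_07_09_1145} is the one cut out by the geometry of the diagonal embedding (this is Proposition~7.3 and its surrounding discussion in \cite{BSV}). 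Then I would invoke the $\g$-dominant form of the three-variable reciprocity law: the image of $\res_p$ of this class, projected to the $F^+T_\f^\dagger\widehat\otimes(F^-T_\g\otimes_{\cR_\g}F^+T_{\g^c})$-quotient and fed into the big Perrin-Riou logarithm ${\rm Log}^{(\g),\dagger}_{\omega^{(\g)}}$ built in \eqref{eqn_2024_07_02_1246}, computes the $\g$-dominant triple product $p$-adic $L$-function restricted to the line $\cW_2\hookrightarrow\cW_3$.

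The key steps, in order, are: (i) match the normalizations. The reciprocity law in \cite{BSV} is stated for the three-variable family $\f\otimes\g\otimes\g^c$ over $\cW_3$, with its own choices of periods $\omega_\f,\eta_\g,\omega_{\g^c}$, Atkin--Lehner twists $W_{N\p^r}$, and the unit $a_p$-factors entering the modified Poincar\'e pairing \eqref{eqn_2023_09_29_1444}; I would verify that the composite $\omega^{(\g)}=\omega_\f\otimes\eta_\g\otimes\omega_{\g^c}$ followed by $a\otimes b\otimes c\mapsto a\otimes bc$ is exactly the one appearing in the interpolation formula for $\cL_p^\hg(\hf\otimes\hg\otimes\hg^c)$, using the commutative diagrams \eqref{eqn_2023_09_27_1038} and \eqref{eqn_2023_09_26_1321}. (ii) Restrict to the diagonal $\iota_{2,3}$. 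One must check that the big diagonal cycle over $\cW_3$ specializes, under $\mu\mapsto\lambda$, to the class $\Delta^{\etale}(\f\otimes{\rm ad}(\g))$ over $\cW_2$; this is where the identification ${\rm ad}(T_\g)\simeq T_\g\otimes_{\cR_\g}T_{\g^c}(\bbchi_\g^{-1}\chi_\cyc^{-1})$ of \eqref{eqn_2023_09_26_1043}, together with the decomposition ${\rm ad}(T_\g)={\rm ad}^0(T_\g)\oplus R_\g$, enters: the trace/scalar part must contribute trivially to the $\g$-dominant logarithm, which follows from the third line of \eqref{eqn_2024_07_03}. (iii) Check that the $p$-stabilization and level-raising factors (the $\mathcal E_p$-type Euler factors and the $U_p$-normalizations that distinguish big Heegner-point conventions from diagonal-cycle conventions) are already absorbed into the definition of $\cL_p^\hg(\hf\otimes\hg\otimes\hg^c)$ as in Theorem~3.4 of \cite{BCS}, so that the identity holds on the nose rather than up to an Euler-factor fudge.

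The main obstacle I expect is step (i) together with a careful bookkeeping in step (ii): the three source works (\cite{BSV}, \cite{DarmonRotger}, and \cite{BCS}) use slightly different sign conventions, different choices of the self-dual twist $\bbchi_\f^{1/2}$ versus $\bbchi_\g^{1/2}$, and different normalizations of the overconvergent Eichler--Shimura maps $\omega_\h,\eta_\h$ (the congruence-ideal factor $H_\h$ being the visible trace of this). Reconciling these so that the constant in \eqref{eqn_2023_10_03_1126} is genuinely $1$ — rather than an a priori unit in $\cR[1/p]$ — is the delicate part; it is handled in \cite{BSV} for the relevant range and one transports it across the dictionary in the footnotes of \S\ref{BDP_p-adic_L-function} and \S\ref{subsubsec_226_2024_03_04}. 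Once the conventions are aligned, the statement is a direct consequence of the cited reciprocity law restricted along $\iota_{2,3}$, with the commutativity of \eqref{eqn_2023_09_27_1233} guaranteeing that the $\g$-logarithm ${\rm Log}^{(\g),\dagger}_{\omega^{(\g)}}$ is the correct functional to pair against $\res_p^{(\g)}(\Delta^{\etale})$.
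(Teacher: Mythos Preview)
Your proposal is a reasonable and careful sketch of how to extract this reciprocity law from \cite{BSV} and \cite{DarmonRotger}, but you should be aware that the paper does not actually give a proof of this theorem at all: it is stated with the attribution ``Bertolini--Seveso--Venerucci, Darmon--Rotger'' in the theorem header and then used as a black box. The authors treat it as a direct citation of the explicit reciprocity laws in those works, transported through the dictionary of \S2.1 and the construction of $\Delta^{\etale}(\f\otimes{\rm ad}(\g))$ in Definition~\ref{defn_2023_11_15_1209}.

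What you have written is therefore not a competing proof but rather an expansion of the steps the paper is tacitly asking the reader to fill in. Your steps (i)--(iii) are the right ones, and the concerns you flag about normalization (the congruence-ideal factor $H_\g$, the Atkin--Lehner twists, the self-dual twist conventions) are genuine bookkeeping issues that the paper addresses only through the commutative diagrams \eqref{eqn_2023_09_27_1038} and \eqref{eqn_2023_09_27_1233} and the footnotes comparing conventions with \cite{BSV} and \cite{CastellapadicvariationofHeegnerpoints}. One minor point: your remark in step (ii) that ``the trace/scalar part must contribute trivially to the $\g$-dominant logarithm'' is not quite how the paper sets things up---the class $\Delta^{\etale}(\f\otimes{\rm ad}(\g))$ lives in $H^1(\QQ,T^\dagger)$ with $T^\dagger=T_\f^\dagger\widehat\otimes{\rm ad}(T_\g)$ (the full adjoint, not ${\rm ad}^0$), and the reciprocity law is asserted for this class directly; the splitting into ${\rm ad}^0$ and trace pieces only enters later, in the reduction steps via the diagram \eqref{eqn_2023_09_27_1233}.
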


\subsubsection{} Let $\kappa$ (resp. $\lambda$) be a specialization of $\cR_\f$ (resp. of $\cR_\g$) of weight $2$ and wild character $\psi_\kappa$ (resp. $\psi_\lambda$). We will explicate the specialization of both sides of \eqref{eqn_conj_main_6_plus_2_bis_compact} when specialized at such $(\kappa,\lambda)$. 

We first recall from the commutative diagram \eqref{eqn_2023_09_27_1233} that 
$${\rm Log}_{\omega^{(\g)}}^{(\g),\dagger}\left(\res_p^{(\g)}\left(\Delta^{\etale}(\f\otimes{\rm ad}(\g))\right)\right)={\rm Log}_{\omega_{\f}}^{\dagger}\left(\Delta^{\rm (tr)}_p(\f\otimes{\rm ad}(\g))\right)$$
where we have put\footnote{The superscript ``${\rm tr}$'' in our notation is to remind the reader that we are considering the image of the family of diagonal cycles $\Delta^{\etale}(\f\otimes{\rm ad}(\g))$ under the map that is induced from ${\rm tr}$, given as in \eqref{eqn_2023_09_26_1321}.} 
$$\Delta^{\rm (tr)}_p(\f\otimes{\rm ad}(\g)):=({\rm id} \otimes {\rm tr})\circ \res_p\left(\Delta^{\etale}(\f\otimes{\rm ad}(\g))\right)\,.$$

In view of Theorem~\ref{thm_reciprocity_big_diag}, Conjecture~\ref{conj_main_6_plus_2_bis_compact} is therefore equivalent to the assertion that
\begin{equation}
    \label{eqn_conj_main_6_plus_2_bis_bis}
    {\rm Log}_{\omega_{\f}}^{\dagger}\left(\Delta^{\rm (tr)}_p(\f\otimes{\rm ad}(\g))\right)^2\,\stackrel{?}{=}  \mathscr{D}\cdot \cL_p^\Ad(\hf\otimes \Ad^0\hg) \cdot \mathcal L_p^\dagger(\f/K)^2\,,
\end{equation}
where $\mathscr{D}\in {\rm Frac}(\cR)$ is as in the statement of Conjecture \ref{conj_main_6_plus_2_bis_compact}.

\begin{lemma}
    \label{lemma_first_reduction}
    Conjecture~\ref{conj_main_6_plus_2_bis} follows if \eqref{item_deg6} holds and we have
    \begin{equation}
        \label{eqn_2023_10_03_1202}
        \begin{aligned}
            \log_{\omega_{\f_\kappa}}\left(\Delta^{\rm (tr)}_p(\f\otimes{\rm ad}(\g))_{\vert_{\kappa,\lambda}}\right)^2&
            =
            \mathscr{D}(\kappa,\lambda) \times \mathfrak{g}(\psi_{\kappa}^{\frac{1}{2}})\cdot C_{\f_\kappa}^- \cdot \frac{\Lambda(\hf_\kappa\otimes\Ad^0\hg_\lambda,\psi_\kappa^{-\frac{1}{2}},1)}{a_p(\f_\kappa)^{-s_\kappa}\,\Omega_{\hf_\kappa}^- \, \Omega_{\hg_\lambda}^{\rm ad}} \\
            &\quad\quad \times  p^{2-2s_\kappa}[L_{s_\kappa}:H_{p^{s_\kappa}}]^{-2} a_p(\f_\kappa)^{-2s_\kappa}\cdot\log_{\omega_{\f_\kappa}}(Q_\kappa)^2
        \end{aligned}
    \end{equation}
    for all specializations $\kappa$ (resp. $\lambda$) of $\cR_\f$ (resp. of $\cR_\g$) of weight $2$ and non-trivial wild character $\psi_\kappa$ (resp. $\psi_\lambda$) of conductor $p^{s_\kappa}$ (resp. $p^{s_\lambda}$), for which $f_\kappa$ and $g_\lambda$ are newforms. Here:
    \begin{itemize}
        \item $\mathscr D$ is as in the statement of Conjecture \ref{conj_main_6_plus_2_bis_compact}, and its specializations $\mathscr D(\kappa, \lambda)$ at arithmetic points are given by \eqref{eqn_2024_02_04_1756}. 
        \item $\Lambda(\hf_\kappa\otimes\Ad^0\hg_\lambda,\psi_\kappa^{-\frac{1}{2}},s)$ is the twisted complete  $L$-series associated to $\hf_\kappa\otimes\Ad^0\hg_\lambda$, cf. \cite[\S3.4.2]{BCS}.
        \item $\Omega_{\hg_\lambda}^{\rm ad}:=\Omega_{\hg_\lambda}\Omega_{\overline{\hg}_\lambda}$, where $$\Omega_{\hg_\lambda}={8\sqrt{-1}\,a_p(g_\lambda)^{-2s_\lambda}\mathfrak{g}(\psi_\lambda) || g_\lambda||^2}/{\mathfrak{c}_\g(\lambda)}\,.$$ 
        Here, $\mathfrak{g}(\psi_\lambda)$ is the Gauss sum, $|| g_\lambda||^2$ is the Petersson-norm of the newform $g_\lambda$ (cf. \cite{Hsieh}, \S1.4), and $\mathfrak{c}_\g(\lambda)$ is the congruence number of $g_\lambda$ (cf. \cite[(0.3)]{Hida81}, see also \cite{BCS}, \S3.2.1).
        \item $\Omega_{\hf_\kappa}^-$ (resp. $C_{\f_\kappa}^-$) is the complex (resp. $p$-adic) period that appears in the interpolation formula of the Mazur--Kitagawa $p$-adic  $L$-function (cf. \cite{BCS}, Theorem 3.1). 
        \item The fields $L_{s_\kappa}$ and $H_{p^{s_\kappa}}$ are given as in \S\ref{subsubsec_2024_02_07_1331} and \S\ref{subsubsec_2024_07_09_0945}.
    \end{itemize}
\end{lemma}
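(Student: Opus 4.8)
\emph{The plan} is to route the statement through the explicit reciprocity law of Theorem~\ref{thm_reciprocity_big_diag}, cut down to a Zariski-dense set of weight-$2$ test points, and there compare the two sides via the available interpolation formulae. By Theorem~\ref{thm_reciprocity_big_diag} together with the commutative diagram \eqref{eqn_2023_09_27_1233}, Conjecture~\ref{conj_main_6_plus_2_bis_compact} — equivalently, in light of Corollary~\ref{SpecializationBDP}, Conjecture~\ref{conj_main_6_plus_2_bis} — is the identity \eqref{eqn_conj_main_6_plus_2_bis_bis} in ${\rm Frac}(\cR)$; here the standing hypothesis \eqref{item_NA} is used so that ${\rm Log}_{\omega_\f}^\dagger$ is available over the whole of $\cW_\f$ and not merely over a wide-open disc as in \S\ref{subsubsec_2023_10_02_1712}, which is essential because the test points below involve wild characters of unbounded order while any given disc contains only finitely many of them. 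Since two elements of ${\rm Frac}(\cR)$ that agree along a Zariski-dense set of arithmetic points of $\Spf(\cR)$ at which both are regular must coincide, and since the classical weight-$2$ points $\kappa\in\cW_\f$ with non-trivial wild character $\psi_\kappa$ of conductor $p^{s_\kappa}$ for which $\f_\kappa$ is a newform of level $\Gamma_0(N_\f)\cap\Gamma_1(p^{s_\kappa})$ form an infinite and hence Zariski-dense subset of the irreducible one-dimensional space $\cW_\f$ (and likewise for $\g$), it suffices to verify \eqref{eqn_conj_main_6_plus_2_bis_bis} after evaluation at each admissible pair $(\kappa,\lambda)$, i.e. each such pair lying outside the proper closed polar locus of $\mathscr D$ and the zero loci of the denominators that occur.

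\emph{Pointwise comparison.} Fix such a pair $(\kappa,\lambda)$. The evaluation of the left-hand side of \eqref{eqn_conj_main_6_plus_2_bis_bis} is $\bigl({\rm Log}_{\omega_\f}^\dagger(\Delta^{\rm (tr)}_p(\f\otimes{\rm ad}(\g)))_{\vert_{\kappa,\lambda}}\bigr)^2$, and I would assemble it from four ingredients. First, the interpolation formula for the big Perrin--Riou logarithm ${\rm Log}_{\omega_\f}^\dagger$ at the ramified weight-$2$ specialization $\kappa$ (in the style of \cite{KLZ2} and \cite{BSV}, and of Ochiai's two-variable reciprocity law), which expresses ${\rm Log}_{\omega_\f}^\dagger(\Delta^{\rm (tr)}_p(\f\otimes{\rm ad}(\g)))_{\vert_{\kappa,\lambda}}$ as an explicit factor $c_1(\kappa)$ — built from a Gauss sum of $\psi_\kappa^{\pm 1/2}$ and powers of $a_p(\f_\kappa)$ and of $p$, with due attention to the $\bbchi_\f^{1/2}$-twist built into ${\rm Log}_{\omega_\f}^\dagger$ — times the classical logarithm $\log_{\omega_{\f_\kappa}}(\Delta^{\rm (tr)}_p(\f\otimes{\rm ad}(\g))_{\vert_{\kappa,\lambda}})$. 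Second, the hypothesis \eqref{item_deg6} (that is, \cite[Conjecture 3.6]{BCS}), which evaluates $\cL_p^\Ad(\hf\otimes\Ad^0\hg)(\kappa,\lambda)$ as an explicit $p$-adic multiplier $c_2(\kappa,\lambda)$ (an $\varepsilon$-factor times a power of $p$ at a ramified point) times $\mathfrak{g}(\psi_\kappa^{1/2})\,C_{\f_\kappa}^-\cdot\Lambda(\hf_\kappa\otimes\Ad^0\hg_\lambda,\psi_\kappa^{-1/2},1)/(a_p(\f_\kappa)^{-s_\kappa}\Omega_{\hf_\kappa}^-\Omega_{\hg_\lambda}^{\rm ad})$. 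Third, Corollary~\ref{SpecializationBDP}, which yields $\mathcal L_p^\dagger(\f/K)(\kappa)^2=\varepsilon(\pmb\xi_{\kappa,\p})^2\,p^{2-2s_\kappa}[L_{s_\kappa}:H_{p^{s_\kappa}}]^{-2}\log_{\omega_{\f_\kappa}}(Q_\kappa)^2$. Fourth, the explicit description \eqref{eqn_2024_02_04_1756} of $\mathscr D(\kappa,\lambda)$. Plugging in the first ingredient, then using the hypothesis \eqref{eqn_2023_10_03_1202} to rewrite $\log_{\omega_{\f_\kappa}}(\Delta^{\rm (tr)}_p(\f\otimes{\rm ad}(\g))_{\vert_{\kappa,\lambda}})^2$, and finally rearranging with the second and third ingredients, the identity at $(\kappa,\lambda)$ reduces — after cancelling the common factor $\mathscr D(\kappa,\lambda)\,\cL_p^\Ad(\hf\otimes\Ad^0\hg)(\kappa,\lambda)\,\mathcal L_p^\dagger(\f/K)(\kappa)^2$, which is nonzero for a Zariski-dense subfamily of admissible test points — to the purely numerical relation $c_1(\kappa)^2\,a_p(\f_\kappa)^{-2s_\kappa}=c_2(\kappa,\lambda)\,\varepsilon(\pmb\xi_{\kappa,\p})^2$.

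\emph{Main obstacle.} The heart of the matter is this last equality of normalizing factors: one must reconcile the $p$-part of the interpolation factor of the big Perrin--Riou logarithm with the $p$-adic multiplier of $\cL_p^\Ad$ prescribed by \eqref{item_deg6}, keeping precise track of the Gauss sums $\mathfrak{g}(\psi_\kappa^{\pm 1/2})$, of the period and congruence factors $C_{\f_\kappa}^-,\Omega_{\hf_\kappa}^-,\Omega_{\hg_\lambda}^{\rm ad}$, and of the $U_p$-eigenvalue corrections relating the weight-$2$ specialization of the big diagonal cycle to the twisted Heegner point $Q_\kappa$ (the provenance of the $a_p(\f_\kappa)$-powers in \eqref{eqn_2023_10_03_1202}, consistently with Lemma~\ref{lemma_2023_11_18_1624} and Corollary~\ref{SpecializationBDP}). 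This is exactly the bookkeeping that the prefactors in \eqref{eqn_2023_10_03_1202} have been designed to absorb; once it is checked, \eqref{eqn_conj_main_6_plus_2_bis_bis} holds at every admissible $(\kappa,\lambda)$, hence identically in ${\rm Frac}(\cR)$, whence Conjecture~\ref{conj_main_6_plus_2_bis} — equivalently Conjecture~\ref{conj_main_6_plus_2} — follows.
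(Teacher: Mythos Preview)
Your approach is essentially the same as the paper's: reduce \eqref{eqn_conj_main_6_plus_2_bis_bis} to its specialization at a Zariski-dense set of weight-$2$ points with non-trivial wild character, and there compare via the interpolation formulae for ${\rm Log}_{\omega_\f}^\dagger$, for $\cL_p^\Ad$ (granted by \eqref{item_deg6}), and for $\mathcal L_p^\dagger(\f/K)$ (via Corollary~\ref{SpecializationBDP}).

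The difference is that you stop at the ``main obstacle'' --- the numerical identity $c_1(\kappa)^2\,a_p(\f_\kappa)^{-2s_\kappa}=c_2(\kappa,\lambda)\,\varepsilon(\pmb\xi_{\kappa,\p})^2$ --- and declare it a bookkeeping matter to be checked, whereas the paper dispatches it with two concrete observations. First, the specialization of ${\rm Log}_{\omega_\f}^\dagger$ at $\kappa$ is precisely $a_p(\f_\kappa)^{s_\kappa}\varepsilon(\psi_\kappa^{1/2})\log_{\omega_{\f_\kappa}}$, so $c_1(\kappa)=a_p(\f_\kappa)^{s_\kappa}\varepsilon(\psi_\kappa^{1/2})$. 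Second, and this is the point you leave vague, the $p$-adic multiplier $\mathcal E^\Ad(\hf_\kappa\otimes\Ad^0\hg_\lambda,1)$ in the interpolation formula for $\cL_p^\Ad$ equals $1$ at these points, because $F^\pm T_{\f_\kappa}^\dagger=T_{\f_\kappa}^\pm(\psi_\kappa^{-1/2})$ is ramified when $\psi_\kappa$ is non-trivial; hence $c_2(\kappa,\lambda)=1$. The remaining identification $\varepsilon(\psi_\kappa^{1/2})^2=\varepsilon(\pmb\xi_{\kappa,\p})^2$ follows because $\psi_\kappa^{1/2}\pmb\xi_{\kappa,\p}^{-1}$ is unramified at $\p$. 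With these three facts in hand your residual equation is immediate, and the rest of your argument goes through unchanged.
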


\begin{proof}
    Note that specializations $(\kappa,\lambda)$ as in the statement of our lemma are dense in ${\rm Spec}(\cR)(\overline{\QQ}_p)$. Therefore, it suffices to verify that the specialization of \eqref{eqn_conj_main_6_plus_2_bis_bis} to such $(\kappa,\lambda)$ amounts to the asserted equality \eqref{eqn_2023_10_03_1202}. This is an immediate consequence of the following interpolative properties (where the second one is conjectural, see however \cite[Theorem 3.7]{BCS} for a partial result):
    \begin{itemize}
\item[i)] $\omega_\f$ specializes to $\omega_{\f_\kappa}$, and ${\rm Log}_{\omega_{\f}}^{\dagger}$ to $a_p(\f_\kappa)^{s_\kappa}\varepsilon(\psi_{\kappa}^{{1}/{2}})\log_{\omega_{\f_\kappa}}$. Moreover, $\psi_{\kappa}^{{1}/{2}}\pmb\xi_{\kappa,\p}^{-1}$ is unramified at $\p$. Here, the universal anticyclotomic character $\pmb\xi$ is as in \S\ref{BDP_p-adic_L-function} (and $\pmb\xi_{\kappa,\p}$ is the $\p$-component of its specialization at $\kappa$), and our notation is borrowed from \cite{CastellapadicvariationofHeegnerpoints} where it was introduced in  Definition 2.8 of op. cit.
        \item[ii)] Conjecturally (cf. \cite[Conjecture 3]{BCS}, extended in line with the Coates--Perrin-Riou formalism):
        $$\cL_p^\Ad(\hf\otimes \Ad^0\hg)(\kappa,\lambda)=\mathfrak{g}(\psi_{\kappa}^{\frac{1}{2}})\cdot C_{\f_\kappa}^- \cdot \mathcal E^\Ad(\hf_\kappa\otimes\Ad^0\hg_\lambda, 1) \cdot  \frac{\Lambda(\hf_\kappa\otimes\Ad^0\hg_\lambda,1)}{a_p(\f_\kappa)^{-s_\kappa}\,\Omega_{\hf_\kappa}^- \, \Omega_{\hg_\lambda}^{\rm ad}} \,,$$
        where $\mathcal E^\Ad(\hf_\kappa\otimes\Ad^0\hg_\lambda, 1)=1$ since $F^\pm T_{\f_\kappa}^\dagger:=T_{\f_\kappa}^\pm(\psi_\kappa^{-\frac{1}{2}})$ is ramified as the wild character $\psi_\kappa$ is non-trivial.
    \end{itemize}
\end{proof}

\section{Review on Gross--Kudla--Schoen cycles and big diagonal cycles}
\label{subsec_big_diagonal}

Let $X$ be a smooth, projective, and connected curve over a number field $K$. The Chow group $\CH^2(X^3)$ of codimension two cycles on $X^3$ admits a filtration (see \cite[\S 3.1]{YZZ12} for details)
\[
\CH^2(X^3) \supset \CH^{2,1}(X^3) \supset \CH^{2,2}(X^3) \supset \CH^{2,3}(X^3),
\]
where $\CH^{2,1}(X^3)$ ($=\CH^2_0(X^3)$, the subgroup of $\CH^{2}(X^3)$ consisting of homologically trivial cycles) is defined as the kernel of the cycle class map $\CH^2(X^3) \to H^4(\overline{X}^{3},\Q_{\ell})$, where $\overline {X}^3 := X^3_{\overline K}$ and $\ell$ is an auxiliary prime. Using the comparison isomorphisms between \'etale and singular cohomology over $\C$, one shows that $\CH^{2,1}(X^3)$ is independent of the choice of $\ell$.

The filtration above can be upgraded to a decomposition of $\CH^2({X}^3)$ (cf. Equation (3.1.1) in op. cit.). The projection of the diagonal cycle $X_{123}\subset X^3$ onto $\CH^{2,1}(X^3)$ is precisely the Gross--Kudla--Schoen cycle $\Delta_{\mathrm{GKS}}$ that we discuss in \S\ref{subsubsec_421_2023_11_02_830} below.

\subsection{Gross--Kudla--Schoen modified diagonal cycles} 
\label{subsubsec_421_2023_11_02_830}

Let $X = X_1(M)$ denote the modular curve of $\Gamma_1(M)$-level. For $i=1,2,3$, write $X_i$ for a copy of $X$, and $X^3 = X_1 \times X_2 \times X_3$, so that we label each factor in the triple self-product. Fix once and for all a base point $o \in X(\Q)$, and let $\iota_o: X \to X$ be the constant morphism with image $\{o\}$. If $I$ is a subset of $\{1,2,3\}$, we denote by $\iota_I: X \to X^3$ the morphism characterized as $\mathrm{id}_X$ on the factors $X_i$ with $i\in I$ and as $\iota_o$ on the factors $X_i$ with $i\not\in I$. And to ease notation, we write $\iota_{12}$ for $\iota_{\{1,2\}}$, $\iota_3$ for $\iota_{\{3\}}$, and so on. We also write $X_I$ for the image of $\iota_I$ inside $X^3$, and simplify notation in the indices in the same manner as for $\iota_I$.

With this, the {\em Gross--Kudla--Schoen modified diagonal cycle} in $X^3$ is defined to be the cycle (or rather, class)
\[
\Delta_{\mathrm{GKS}} = X_{123} - X_{12} - X_{13} - X_{23} + X_1 + X_2 + X_3 \in \CH^2(X^3)(\Q).
\]

If the fixed base point $o \in X(K)$ is only $K$-rational, for some extension $K/\Q$, the previously defined cycle will belong to $\CH^2(X^3)(K)$ instead (note that each of the cycles involved in the definition will be $K$-rational, if so is $o$).

\begin{lemma}
The cycle $\Delta_{\mathrm{GKS}}$ is cohomologically trivial, that is $\Delta_{\mathrm{GKS}} \in \CH^2(X^3)_0(\Q)$.
\end{lemma}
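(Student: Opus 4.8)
The plan is to show directly that the cycle class $\mathrm{cl}(\Delta_{\mathrm{GKS}})\in H^4(\overline{X}^3,\Q_\ell)$ vanishes, since by definition $\CH^2(X^3)_0(\Q)$ is the kernel of this map (and the kernel is $\ell$-independent, as recalled above). First I would use the base point $o\in X(\Q)$ to split $H^\bullet(\overline{X},\Q_\ell)$ into its graded pieces, with $H^0(\overline{X})=\Q_\ell\cdot[X]$ and $H^2(\overline{X})=\Q_\ell\cdot[o]$, and take the induced K\"unneth decomposition
\[
H^4(\overline{X}^3,\Q_\ell)=\bigoplus_{a+b+c=4}H^a(\overline{X})\otimes H^b(\overline{X})\otimes H^c(\overline{X}),
\]
in which the only multi-indices $(a,b,c)$ with $a+b+c=4$ and $a,b,c\in\{0,1,2\}$ are the permutations of $(2,2,0)$ and of $(2,1,1)$; so there are just six summands, three of type $H^2\otimes H^2\otimes H^0$ and three of type $H^2\otimes H^1\otimes H^1$. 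It then suffices to check that $\mathrm{cl}(\Delta_{\mathrm{GKS}})$ has vanishing component in each of these six.

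Next I would compute, for each $I\subseteq\{1,2,3\}$, the K\"unneth components of $\mathrm{cl}(X_I)=\mathrm{cl}(\iota_I(X))$. For $I=\{i\}$, the curve $X_i$ is $X$ in slot $i$ and $\{o\}$ in the other two slots, so $\mathrm{cl}(X_i)$ is $[X]$ in slot $i$ and $[o]$ in the remaining slots, living in a single $(2,2,0)$-type summand. For $|I|=2$, say $I=\{1,2\}$, the curve $X_{12}$ is the diagonal of $X_1\times X_2$ crossed with $\{o\}$ in $X_3$, so $\mathrm{cl}(X_{12})=\mathrm{cl}(\Delta_X)\boxtimes[o]$; decomposing the planar diagonal class $\mathrm{cl}(\Delta_X)\in H^2(\overline{X}^2)$ into its three K\"unneth pieces $[X]\otimes[o]$, $\Delta^{(1,1)}$, $[o]\otimes[X]$ gives a contribution of coefficient $1$ in each of two $(2,2,0)$-summands, plus one contribution (with $[o]$ in slot $3$) in a $(2,1,1)$-summand coming from $\Delta^{(1,1)}$. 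Finally $X_{123}=\delta_*[X]$ for the small diagonal $\delta\colon X\hookrightarrow X^3$, and its K\"unneth components are read off from $\langle\delta_*[X],a\otimes b\otimes c\rangle_{X^3}=\int_X a\cup b\cup c$: the three $(2,2,0)$-components have coefficient $1$, and the $(2,1,1)$-component in, say, $H^2(\overline{X}_1)\otimes H^1(\overline{X}_2)\otimes H^1(\overline{X}_3)$ equals $[o]\boxtimes\Delta^{(1,1)}_{23}$ for the \emph{same} class $\Delta^{(1,1)}_{23}$ that enters $\mathrm{cl}(X_{23})$, since both pair to $\int_X\beta\cup\gamma$ against $[X]\otimes\beta\otimes\gamma$ and the pairing between these two dual K\"unneth summands is perfect.

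Assembling $\mathrm{cl}(\Delta_{\mathrm{GKS}})=\mathrm{cl}(X_{123})-\sum\mathrm{cl}(X_{ij})+\sum\mathrm{cl}(X_i)$ summand by summand then finishes the argument. In $H^0(\overline{X}_1)\otimes H^2(\overline{X}_2)\otimes H^2(\overline{X}_3)$ the contributions are $+1$ from $X_{123}$, $-1$ from each of $X_{12}$ and $X_{13}$ (but not $X_{23}$, whose slot-$1$ factor is $[o]$, not $[X]$), and $+1$ from $X_1$ (but not $X_2,X_3$), giving $1-2+1=0$; by the $S_3$-symmetry of $\Delta_{\mathrm{GKS}}$ the same holds in the other two $(2,2,0)$-summands. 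In $H^2(\overline{X}_1)\otimes H^1(\overline{X}_2)\otimes H^1(\overline{X}_3)$ only $X_{123}$ and $X_{23}$ contribute (the other two $X_{ij}$ carry their $(1,1)$-class in a different pair of slots, and the $X_i$ have no $(1,1)$-part), and by the previous paragraph they carry the identical class, so the total is $0$; $S_3$-symmetry again disposes of the remaining two summands. Hence $\mathrm{cl}(\Delta_{\mathrm{GKS}})=0$, which is the claim. The only slightly delicate point is the last item above — matching the $(2,1,1)$-component of the small diagonal with the $(1,1)$-component of the planar diagonal, a standard compatibility of the base-point K\"unneth projectors — together with keeping the normalizations of $[o]$ and of the various $\Delta^{(1,1)}$'s uniform across the seven terms; beyond that the proof is pure inclusion--exclusion bookkeeping with no real obstacle. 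Alternatively, one can bypass the computation by invoking the description recalled above: $\Delta_{\mathrm{GKS}}$ is, by construction, the image of $X_{123}$ under the idempotent projecting $\CH^2(X^3)$ onto $\CH^{2,1}(X^3)=\CH^2_0(X^3)$, hence homologically trivial; the content is then precisely that this projection is computed by the inclusion--exclusion formula defining $\Delta_{\mathrm{GKS}}$, which is the same calculation performed in $\CH^\bullet$ rather than in cohomology (cf. \cite[\S3.1]{YZZ12}).
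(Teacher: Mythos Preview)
Your proposal is correct; the K\"unneth/inclusion--exclusion computation you outline is exactly the standard argument. The paper itself does not reprove the statement but simply cites \cite[Proposition~3.1]{GrossSchoen-cycle}, where precisely this computation is carried out, so your approach and the paper's (cited) approach coincide.
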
 
\begin{proof}
    This is \cite[Proposition 3.1]{GrossSchoen-cycle}.
\end{proof}

The work of Darmon--Rotger~\cite{DarmonRotger} and Bertolini--Seveso--Venerucci~\cite{BSV} interpolates $p$-adically the images of diagonal cycles under $p$-adic Abel--Jacobi maps. This requires a further modification of the diagonal appropriate to this purpose, which we summarize in the next section.

\subsection{Big diagonal cycles}
We present an overview of the construction of modified diagonal cycles as in \cite{DarmonRotger}. We will follow Section 2 in op. cit., and stick to the notation therein (unless we explicitly indicate otherwise). We claim no originality in this subsection: we very marginally expand on a few constructions in \cite{DarmonRotger}, but all the objects we consider here are already present in op. cit. One exception is \eqref{eqn_2023_11_08_1351}, where we define $\Delta^{\rm ord}_r$ on applying the ordinary projector on the level of cycles (in the expense of passing to $p$-adic coefficients) and interpolate these in place of those denoted by $\Delta^{\circ}_r$ (cf. Definition~\ref{defn_2023_11_17_1544}). We remark that this small deviation has no effect on the results we borrow from \cite{DarmonRotger}, as the authors' main results also concern ordinary cohomology.

Let us fix a positive integer $r$ and denote by $\varpi_1$ the degeneracy map 
$$X_1(Np^r)\lra X_1(Np^{r-1})\,,\qquad (A,P)\longmapsto (A,pP)\,,$$
where $(A,P)\in X_1(Np^r)$ is a pair representing an isomorphism class of an elliptic curve $A$ with a $\Gamma_1(Np^r)$-level structure (so $P$ is a point on $A$ of order $Np^r$). We also put 
$$\omega_1^r\,:\, X_1(Np^r) \,\xrightarrow{\,\,\overbrace{\varpi_1\circ\,\cdots\,\circ\varpi_1}^{r\, \textup{times}}\,\,}\, X_1(N)\,.$$
We define the cycle 
$$\Delta_r:=X_{123}\times_{X_1(N)^3}X_1(Np^r)^{3}$$ 
where the fibre product is respect to the maps $X_1(Np^r)^{3}\xrightarrow{(\varpi_1^r)^3} X_1(N)$ and the natural injection $X_{123}\hookrightarrow X_1(N)^{3}$. The cycle $\Delta_r$ therefore fits in the Cartesian diagram
$$\xymatrix{
\Delta_r\ar@{^{(}->}[r]\ar@{->>}[d] & X_1(Np^r)^{3}\ar@{->>}[d]^{(\varpi_1^r)^3}\\
X_{123}\ar@{^{(}->}[r]&X_1(N)^{3}\,.
}$$
It turns out that $\Delta_r$ is geometrically reducible. For each 
$$[d_1,d_2,d_3]\in (\ZZ/p^r\ZZ)^{\times}\times (\ZZ/p^r\ZZ)^{\times}\times (\ZZ/p^r\ZZ)^{\times}=:\widetilde{G}_r\,,$$
let us denote by 
$\Delta_r[d_1,d_2,d_3]\subset X_1(Np^r)^3$ the geometrically irreducible component defined over $\QQ(\mu_{p^r})$, which is the schematic closure of the locus of points $\left((A,P_1),(A,P_2), (A,P_3)\right)$ that satisfy 
$$\langle P_1,P_2\rangle=\zeta_{p^r}^{d_3}\,,\quad \langle P_2,P_3\rangle=\zeta_{p^r}^{d_1}\,,\quad \langle P_3,P_1\rangle=\zeta_{p^r}^{d_2}\,,$$
where $\zeta_{p^{r}}\in \mu_{p^r}$ is a fixed $p^r$-th root of unity. The diamond action of $\widetilde{G}_r$ and $\Gal(\QQ(\mu_{p^r})/\QQ)\simeq (\ZZ/p^r\ZZ)^\times$ on the collection of cycles
$$\{\Delta_r[d_1,d_2,d_3]\,:\,[d_1,d_2,d_3]\in \widetilde{G}_r\}$$ 
can be described as follows: For any $\langle a_1,a_2,a_3\rangle \in \widetilde{G}_r$ and $\sigma_m \in \Gal(\QQ(\mu_{p^r})/\QQ)$, which is characterized by the property that $\sigma_m(\zeta_{p^r})=\zeta_{p^r}^m$, we have
\begin{equation}
    \label{eqn_2023_11_01_1241}
   \langle a_1,a_2,a_3\rangle \,\Delta_r[d_1,d_2,d_3]=\Delta_r[a_2a_3 d_1,a_1a_3d_2,a_1a_2d_3]
\end{equation}
\begin{equation}
    \label{eqn_2023_11_01_1242}
    \sigma_m\, \Delta_r[d_1,d_2,d_3]=\Delta_r[md_1,md_2,md_3]\,.
\end{equation}
In particular, when $r\geq 1$ and $m$ is a quadratic residue modulo $p^r$ (e.g. when $\sigma_m$ acts trivially on $\mu_p$), we have  
\begin{equation}
    \label{eqn_2023_11_01_12417}
   \sigma_m\, \Delta_r[d_1,d_2,d_3]=\langle m,m,m\rangle^{\frac{1}{2}}\,\Delta_r[d_1,d_2,d_3]\,.
\end{equation}

\subsubsection{$p$-adic cycle class maps}
Let us put $\overline{X}_1(Np^r)^3:={X}_1(Np^r)^3\times_\QQ \overline{\QQ}$ and denote by
$${\rm CH}^2_0(X_1(Np^r)^3; F) := \ker \left( {\rm CH}^2(X_1(Np^r)^3; F) \xrightarrow{{\rm cl}_0} H^4_{\etale} \left(\overline{X}_1(Np^r)^3 , \ZZ_p(2)\right)^{G_F}\right)$$ 
the group of null-homologous algebraic cycles defined over a number field $F$ (which is the kernel of the cycle class map ${\rm cl}_0$). We let ${\rm CH}^2_0(X_1(Np^r)^3; F)_{\ZZ_p}$ denote its $p$-adic completion. 

Since the target of ${\rm cl}_0$ is $p$-adically complete, ${\rm cl}_0$ factors through the $p$-adic completion ${\rm CH}^2(X_1(Np^r)^3; F)_{\ZZ_p}$ of ${\rm CH}^2(X_1(Np^r)^3; F)$, and we denote the induced map on  ${\rm CH}^2(X_1(Np^r)^3; F)_{\ZZ_p}$ still by   ${\rm cl}_0$. Moreover,  using K\"{u}nneth decomposition formula, together with the fact that integral cohomology of smooth projective curves is torsion-free (cf. \cite{DarmonRotgerDiagonal2}, Proposition 1.4), we see that the $\ZZ$-module $H^4_{\etale} \left(\overline{X}_1(Np^r)^3, \ZZ_p(2)\right)$ is torsion-free\footnote{Note that $\ZZ$ acts on $H^4_{\etale} \left(\overline{X}_1(Np^r)^3, \ZZ_p(2)\right)$ via its image under the canonical injection $\ZZ\hookrightarrow \ZZ_p$.}. Then it follows that ${\rm im}({\rm cl}_0)$ is also torsion-free, hence it is flat. It then follows from \cite[\href{https://stacks.math.columbia.edu/tag/0315}{Lemma 0315}]{stacks-project} that 
$${\rm CH}^2_0(X_1(Np^r)^3; F)_{\ZZ_p}=\ker \left( {\rm CH}^2(X_1(Np^r)^3; F)_{\ZZ_p} \xrightarrow{{\rm cl}_0} H^4_{\etale} \left(\overline{X}_1(Np^r)^3 , \ZZ_p(2)\right)\right)\,.$$

Note that $\TT_{N p^r}$ acts on ${\rm CH}^2(X_1(Np^r)^3; F)_{\ZZ_p}$, and as a result, so does $e_{\rm ord}':=\lim_n (U_p')^{n!}$. 
This allows us to define\footnote{The appearance of $W_{Np^r}$ should be compared to the discussion in the portion of \cite{BSV} that lies between Proposition 3.2 and Remark 3.3.} 
\begin{equation}
    \label{eqn_2023_11_08_1351}
    \Delta_r^{\rm ord}[d_1,d_2,d_3]:=(e_{\rm ord}'W_{Np^r})^{\otimes 3}\Delta_r[d_1,d_2,d_3]\in {\rm CH}^2(X_1(Np^r)^3;\QQ(\mu_{p^r}))_{\ZZ_p}\,.
\end{equation}
This definition is made so that we have:

\begin{lemma}
    \label{DR_Lemma_2_5}
    \item[i)] We have $\Delta_r^{\rm ord}[d_1,d_2,d_3]\in {\rm CH}^2_0(X_1(Np^r)^3;\QQ(\mu_{p^r}))_{\ZZ_p}$. 
    \item[ii)] For any integer $m$ coprime to $p$, we have
    \begin{equation}
        \label{eqn_2023_11_11_1700}
\sigma_m\Delta_r^{\rm ord}[d_1,d_2,d_3]=\Delta_r^{\rm ord}[m^{-1}d_1,m^{-1}d_2,m^{-1}d_3]\,.
    \end{equation}
\end{lemma}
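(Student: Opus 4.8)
For (i), I would argue that the ordinary projection already forces the cycle class to vanish. Write $X:=X_1(Np^r)$. Since $X$ is a smooth, projective, geometrically connected curve, $H^j_{\etale}(\overline X,\ZZ_p)=0$ for $j\geq 3$, so the Künneth formula expresses $H^4_{\etale}(\overline X^3,\ZZ_p(2))$ as the direct sum of the tensor products $H^a_{\etale}(\overline X,\ZZ_p)\otimes H^b_{\etale}(\overline X,\ZZ_p)\otimes H^c_{\etale}(\overline X,\ZZ_p)\otimes\ZZ_p(2)$ over triples $(a,b,c)$ with $a+b+c=4$ and $0\leq a,b,c\leq 2$; every such triple has an entry equal to $0$ or to $2$. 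The key input is that $e_{\rm ord}'$ annihilates $H^j_{\etale}(\overline X,\ZZ_p)$ for $j\neq 1$ — equivalently, after conjugating by $W_{Np^r}$, that $e_{\rm ord}$ does so — i.e. that the ordinary part of the cohomology of a modular curve is concentrated in the middle degree; this is exactly the reason $W_{Np^r}$ was inserted in \eqref{eqn_2023_11_08_1351} (compare the discussion in \cite{BSV} between Proposition 3.2 and Remark 3.3, and \cite{ohta99}). It follows that $(e_{\rm ord}')^{\otimes 3}$ kills every Künneth summand, hence all of $H^4_{\etale}(\overline X^3,\ZZ_p(2))$. Since $W_{Np^r}$ acts as a degree-preserving automorphism of $\overline X$ and ${\rm cl}_0$ is equivariant for the action of algebraic correspondences, one gets
\[
{\rm cl}_0\bigl(\Delta_r^{\rm ord}[d_1,d_2,d_3]\bigr)\;=\;(e_{\rm ord}')^{\otimes 3}\Bigl(W_{Np^r}^{\otimes 3}\,{\rm cl}_0\bigl(\Delta_r[d_1,d_2,d_3]\bigr)\Bigr)\;\in\;(e_{\rm ord}')^{\otimes 3}H^4_{\etale}(\overline X^3,\ZZ_p(2))=0,
\]
so $\Delta_r^{\rm ord}[d_1,d_2,d_3]$ lies in the kernel of ${\rm cl}_0$ on the $p$-adic completion, which — using the torsion-freeness of ${\rm im}({\rm cl}_0)$ recorded above — is $\CH^2_0(X^3;\QQ(\mu_{p^r}))_{\ZZ_p}$.

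For (ii), the plan is a direct computation tracking how $\Gal(\QQ(\mu_{p^r})/\QQ)$ interacts with the three operators at play. I would use: (a) that $e_{\rm ord}'=\lim_n(U_p')^{n!}$ is built from the $\QQ$-rational Hecke operator $U_p'$, hence commutes with the Galois action and with the diamond operators; (b) that $W_{Np^r}$, defined over $\QQ(\mu_{p^r})$ through the chosen $p^r$-th root of unity, is Galois-equivariant up to a diamond twist, $\sigma_m(W_{Np^r})=\langle m\rangle\,W_{Np^r}$; and (c) that $W_{Np^r}$ conjugates diamond operators to their inverses, $\langle a\rangle\,W_{Np^r}=W_{Np^r}\,\langle a\rangle^{-1}$. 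Applying $\sigma_m$ to $\Delta_r^{\rm ord}[d_1,d_2,d_3]=(e_{\rm ord}'W_{Np^r})^{\otimes 3}\Delta_r[d_1,d_2,d_3]$, and invoking \eqref{eqn_2023_11_01_1242} for $\sigma_m(\Delta_r[d_1,d_2,d_3])=\Delta_r[md_1,md_2,md_3]$, one obtains
\[
\sigma_m\,\Delta_r^{\rm ord}[d_1,d_2,d_3]\;=\;\bigl(e_{\rm ord}'W_{Np^r}\langle m\rangle^{-1}\bigr)^{\otimes 3}\Delta_r[md_1,md_2,md_3]\;=\;(e_{\rm ord}'W_{Np^r})^{\otimes 3}\,\langle m^{-1},m^{-1},m^{-1}\rangle\,\Delta_r[md_1,md_2,md_3],
\]
and then \eqref{eqn_2023_11_01_1241} (which turns $\langle m^{-1},m^{-1},m^{-1}\rangle$ acting on $\Delta_r[\,\cdot\,]$ into multiplication of each index by $m^{-2}$) collapses the right-hand side to $(e_{\rm ord}'W_{Np^r})^{\otimes 3}\Delta_r[m^{-1}d_1,m^{-1}d_2,m^{-1}d_3]=\Delta_r^{\rm ord}[m^{-1}d_1,m^{-1}d_2,m^{-1}d_3]$, which is the assertion.

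The main obstacle is the single non-formal ingredient in (i): the vanishing of $(e_{\rm ord}')^{\otimes 3}$ on $H^4_{\etale}(\overline X^3,\ZZ_p(2))$, which reduces to the precise statement that the ordinary projector kills the degree-$0$ and degree-$2$ cohomology of the modular curve. This is precisely what makes it legitimate to use $\Delta_r^{\rm ord}$ in place of the Gross--Kudla--Schoen modification $\Delta_r^{\circ}$: the ordinary projection disposes on its own of the non-middle Künneth components of the raw (twisted) diagonal that the GKS correction was introduced to remove. In (ii), by contrast, everything is formal once the equivariance property $\sigma_m(W_{Np^r})=\langle m\rangle W_{Np^r}$ and the relation $\langle a\rangle W_{Np^r}=W_{Np^r}\langle a\rangle^{-1}$ are in hand — the only point requiring care is the bookkeeping of inverses, which is what produces $m^{-1}$ rather than $m$ in the conclusion (note that any residual sign ambiguity is harmless, since $\langle -1,-1,-1\rangle$ fixes each $\Delta_r[d_1,d_2,d_3]$ by \eqref{eqn_2023_11_01_1241}).
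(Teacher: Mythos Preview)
Your argument is correct and follows the same route as the paper. For (i) the paper argues identically via K\"unneth, recording only that $U_p'$ acts by multiplication by $p$ on $H^2(\overline X,\ZZ_p)$; this alone suffices, since every triple $(a,b,c)$ with $a+b+c=4$ and $a,b,c\le 2$ in fact contains a $2$ (not merely ``a $0$ or a $2$''), so your stronger assertion that $e_{\rm ord}'$ also kills $H^0$ is unnecessary and need not hold in the chosen convention --- and the insertion of $W_{Np^r}$ is for lattice/interpolation compatibility (cf.\ \eqref{eqn_2023_11_06_1518}--\eqref{eqn_2023_11_06_1525}), not for this step. For (ii) the paper compresses your relations (b) and (c) into the single identity $\sigma_m\circ (e_{\rm ord}'W_{Np^r})^{\otimes 3}=(e_{\rm ord}'W_{Np^r})^{\otimes 3}\circ\sigma_{m^{-1}}$, justified directly from the moduli description of the Atkin--Lehner operator; your explicit unpacking is equivalent.
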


\begin{proof}
The argument we present for the first claim closely follows the proof of \cite[Lemma 2.5]{DarmonRotger}. The correspondence $U_p'$ acts by multiplication by $p$ on 
$H^2_{\etale}(\overline{X}_1(Np^r)^3,\ZZ_p(2))$, therefore Hida's idempotent $e_{\rm ord}'$ annihilates this module. As a result, $e_{\rm ord}'$ annihilates all the terms that appear in the K\"unneth decomposition of $H^4_{\etale}(\overline{X}_1(Np^r)^3,\ZZ_p(2))$, therefore also $H^4_{\etale}(\overline{X}_1(Np^r)^3,\ZZ_p(2))$ itself, which is the target of the Hecke equivariant cycle class map ${\rm cl}_0$. This concludes the proof of the first assertion. For the second, we note that
\begin{align*}
    \sigma_m\Delta_r^{\rm ord}[d_1,d_2,d_3]&=\sigma_m(e_{\rm ord}'W_{Np^r})^{\otimes 3}\Delta_r[d_1,d_2,d_3]=(e_{\rm ord}'W_{Np^r})^{\otimes 3}\sigma_{m^{-1}}\Delta_r[d_1,d_2,d_3]\\
&=(e_{\rm ord}'W_{Np^r})^{\otimes 3}\Delta_r[m^{-1}d_1,m^{-1}d_2,m^{-1}d_3]=\Delta_r^{\rm ord}[m^{-1}d_1,m^{-1}d_2,m^{-1}d_3]
\end{align*}
where the second equality can be verified using the moduli description of the Atkin--Lehner operator and the action of $\Gal(\QQ(\mu_{p^r}/\QQ)$, whereas the third is \eqref{eqn_2023_11_01_1242}.
\end{proof}

The modification of diagonal cycles as in \eqref{eqn_2023_11_08_1351} is especially useful for the purpose of interpolating their images under the $p$-adic Abel--Jacobi map (see \S\ref{subsubsec_426_2023_11_17_1541}), whereas the following modification (which is copied from \cite{DarmonRotger}, \S2.2) will be used for archimedean aspects (see \S\ref{sec_ChowHeegandGKS} below):
\begin{defn}
    \label{defn_2023_11_17_1544}
    Let us fix a prime $q$ that is coprime to $Np$, and let us set
    $$\Delta^\circ_{r}[d_1,d_2,d_3]:=\left(T_q'-(q+1)\right)^{\otimes 3}W_{Np^r}^{\otimes 3}\Delta_r[d_1,d_2,d_3]\in {\rm CH}^2(X_1(Np^r)^3;\QQ(\mu_{p^r}))_{\ZZ_p}$$
    for any positive integer $r$ and $(d_1,d_2,d_3)\in \widetilde{G}r$.
\end{defn}
As explained in \cite[Lemma~2.5]{DarmonRotger}, the operator $T_q'-(q+1)$ annihilates the image of the cycle class  map ${\rm cl}_0$. As a result,
$$\Delta^\circ_{r}[d_1,d_2,d_3]\in {\rm CH}^2_0(X_1(Np^r)^3;\QQ(\mu_{p^r}))\,.$$

\subsubsection{\'Etale Abel--Jacobi map}
\label{subsubsec_2023_11_21_1550}
We have a map 
$${\rm cl}_1\,:\, {\rm CH}^2_0(X_1(Np^r)^3; F)\lra H^1(F,  H^3_{\etale} (\overline{X}_1(Np^r)^3, \ZZ_p(2)))\,, $$
which is often referred to as the \'etale Abel--Jacobi map (and denoted by $\text{AJ}_{\etale}$ in \cite{DarmonRotger}). 

Since the target of ${\rm cl}_1$ is $p$-adically complete, ${\rm cl}_1$ factors through the $p$-adic completion ${\rm CH}^2_0(X_1(Np^r)^3; F)_{\ZZ_p}$ of ${\rm CH}^2_0(X_1(Np^r)^3; F)$, and we denote the induced map on  ${\rm CH}^2_0(X_1(Np^r)^3; F)_{\ZZ_p}$ still by   ${\rm cl}_1$.

\begin{remark}
    \label{remark_2023_11_21_1623}
    Let $\p\subset \cO_F$ denote any prime of $F$. The constructions above can be carried out also with $F$ replaced by $F_\p$, to yield the \'etale Abel--Jacobi map
    $${\rm cl}_1\,:\, {\rm CH}^2_0(X_1(Np^r)^3; F_\p)\lra H^1(F_\p,  H^3_{\etale} (\overline{X}_1(Np^r)^3, \ZZ_p(2)))$$
    over the base field $F_\p$. We then have the following commutative diagram, where the arrow on the left is the natural morphism induced from the inclusion $F\hookrightarrow F_\p$:
    $$
    \xymatrix{
    {\rm CH}^2_0(X_1(Np^r)^3; F)\ar[r]^-{{\rm cl}_1}\ar[d]& H^1(F,  H^3_{\etale} (\overline{X}_1(Np^r)^3, \ZZ_p(2)))\ar[d]^-{\res_\p}\\
    {\rm CH}^2_0(X_1(Np^r)^3; F_\p)\ar[r]_-{{\rm cl}_1}& H^1(F_\p,  H^3_{\etale} (\overline{X}_1(Np^r)^3, \ZZ_p(2)))\,.
    } 
    $$
\end{remark}

Thanks to Lemma~\ref{DR_Lemma_2_5}, we may put
\begin{equation}
\label{eqn_2023_11_08_1512}
   \Delta_r^{\etale}[d_1,d_2,d_3] := {\rm cl}_1(\Delta_r^{\ord}[d_1,d_2,d_3])\in H^1(\QQ(\mu_{p^r}),  H^3_{\etale} (\overline{X}_1(Np^r)^3, \ZZ_p(2)))\,.
\end{equation} 
By a slight abuse of notation, we will also denote by $\Delta_r^{\etale}[d_1,d_2,d_3] $ the image of $\Delta_r^{\ord}[d_1,d_2,d_3]$ under the compositum
$${\rm CH}^2_0(X_1(Np^r)^3;\QQ(\mu_{p^r}))\xrightarrow{{\rm cl}_1} H^1(\QQ(\mu_{p^r}),  H^3_{\etale} (\overline{X}_1(Np^r)^3, \ZZ_p(2)))\xrightarrow{\,\,\texttt{K}\,\,} H^1(\QQ(\mu_{p^r}),  H^1_{\etale} (\overline{X}_1(Np^r), \ZZ_p)^{\otimes 3})(2))\,,$$
where the final projection $\texttt{K}$ is induced from K\"unneth decomposition. 

\subsubsection{Field of definition}
Let $G_r$ denote the $p$-Sylow subgroup of $\widetilde{G}_r$. In order to descend the field of definition of the cohomology class $ \Delta_r^{\etale}[d_1,d_2,d_3]$, we define (following \cite[\S2.2]{DarmonRotger}; see especially Equation 2.9 and the proof of Lemma 2.6 in op. cit.) 
\begin{equation}
    \label{eqn_2023_11_10_1126}
    \Delta_r^{\etale}[[a,b,c]]:=\sum_{\langle d_1,d_2,d_3\rangle \in G_r}\Delta_r^{\etale}[d_2d_3a,d_1d_3b,d_1d_2c]\,\langle d_1,d_2,d_3\rangle\in H^1(\QQ(\mu_{p^r}),  H^1_{\etale} (\overline{X}_1(Np^r), \ZZ_p)^{\otimes 3}(2))
\end{equation}
for each $(a,b,c)\in(\FF_p^\times)^3$, which we also regard in the formula above as an element of $\widetilde{G}_r$ via Teichm\"uller lift. In \eqref{eqn_2023_11_10_1126}, we regard each $\Delta_r^{\etale}[e_1,e_2,e_3]$ as a cocycle taking values in 
$${\rm Hom}_{\ZZ_p}\left(H^1_{\etale} (\overline{X}_1(Np^r), \ZZ_p)^{\otimes 3}(1),\ZZ_p\right)\xrightarrow{\,\,\sim\,\,} H^1_{\etale} (\overline{X}_1(Np^r), \ZZ_p)^{\otimes 3}(2)$$
(where the isomorphism follows from Poincar\'e duality), and $\Delta_r^{\etale}[d_2d_3a,d_1d_3b,d_1d_2c]\,\langle d_1,d_2,d_3\rangle$ is a cocycle with values in the module 
$${\rm Hom}_{\ZZ_p}\left(H^1_{\etale} (\overline{X}_1(Np^r), \ZZ_p)^{\otimes 3}(1),\ZZ_p[G_r]\right)\,.$$
It is easy to see that the expression
$$\sum_{\langle d_1,d_2,d_3\rangle \in G_r}\Delta_r^{\etale}[d_2d_3a,d_1d_3b,d_1d_2c]\,\langle d_1,d_2,d_3\rangle$$
on the right of the formula \eqref{eqn_2023_11_10_1126} belongs to
\begin{align*}
    {\rm Hom}_{\ZZ_p[G_r]}\left(H^1_{\etale} (\overline{X}_1(Np^r), \ZZ_p)^{\otimes 3}(1),\ZZ_p[G_r]\right)&\xrightarrow[\texttt{G}]{\,\,\sim\,\,} {\rm Hom}_{\ZZ_p}\left(H^1_{\etale} (\overline{X}_1(Np^r), \ZZ_p)^{\otimes 3}(1),\ZZ_p\right)\\
    &\qquad\qquad\qquad\qquad \xrightarrow{\,\,\sim\,\,} H^1_{\etale} (\overline{X}_1(Np^r), \ZZ_p)^{\otimes 3}(2),
\end{align*}
where the isomorphism $\texttt{G}$ and its inverse $\texttt{G}^{-1}$ are given by 
$$\psi(\bullet)=\sum_{g\in G_r} \psi_g(\bullet)g \mapsto \psi_e(\bullet)\,,\quad \sum_{g\in G_r}\phi(g^{-1}\bullet)g \mapsfrom \phi(\bullet)\,. $$
\begin{remark}
\label{remark_2023_11_13_1509}
 \item[i)]   The formation of the cohomology class $ \Delta_r^{\etale}[[a,b,c]]$ can be recast as follows: It is the image of the class $\Delta_r^{\etale}[a,b,c]$ under the chain of natural isomorphisms induced from
    \begin{align*}
      H^1_{\textup{\'et}} (\overline{X}_1(Np^r), \ZZ_p)^{\otimes 3}(2)\stackrel{\sim}{\lra} & \,{\rm Hom}_{\ZZ_p}\left(H^1_{\textup{\'et}} (\overline{X}_1(Np^r), \ZZ_p)^{\otimes 3}(1),\ZZ_p\right)\\
      &\xrightarrow[\sim]{\textup{\texttt{G}}^{-1}}  {\rm Hom}_{\ZZ_p[G_r]}\left(H^1_{\textup{\'et}} (\overline{X}_1(Np^r), \ZZ_p)^{\otimes 3}(1),\ZZ_p[G_r]\right) \stackrel{\sim}{\lra} H^1_{\textup{\'et}} (\overline{X}_1(Np^r), \ZZ_p)^{\otimes 3}(2)\,.
    \end{align*}
  \item[ii)]  To ease notation, let us put (only in this remark) ${}_r\mathscr{S}:= H^1_{\textup{\'et}} (\overline{X}_1(Np^r), \ZZ_p)^{\otimes 3}(1)$. We then have the following diagram where all squares are commutative, and which extends the chain of isomorphisms above:
    \begin{align}
    \label{eqn_2023_11_13_1224}
        \begin{aligned}\xymatrix{
            {}_{r+1}\mathscr{S}(1) \ar[r]^-{\sim} \ar[d]_{\varpi_{1,*}^{\otimes 3}}& {\rm Hom}_{\ZZ_p}({}_{r+1}\mathscr{S},\ZZ_p) \ar[d]_{(\varpi_{1}^{*\otimes 3})'}\ar[r]_-{\sim}^-{\textup{\texttt{G}}^{-1}} &{\rm Hom}_{\ZZ_p[G_{r+1}]}({}_{r+1}\mathscr{S},\ZZ_p[G_{r+1}]) \ar[r]^-{\sim}\ar[d] & {}_{r+1}\mathscr{S}(1)\ar[d]_{\varpi_{1,*}^{\otimes 3}}\\
            {}_{r}\mathscr{S}(1) \ar[r]^-{\sim} & {\rm Hom}_{\ZZ_p}({}_{r}\mathscr{S},\ZZ_p) \ar[r]^-{\sim}_-{\textup{\texttt{G}}^{-1}} &{\rm Hom}_{\ZZ_p[G_{r}]}({}_{r}\mathscr{S},\ZZ_p[G_{r}]) \ar[r]^-{\sim}& {}_{r}\mathscr{S}(1)
            }
        \end{aligned}
    \end{align}
    where $(\varpi_{1}^{*\otimes 3})^{\prime}$ is the pullback of ${}_{r}\mathscr{S}\xrightarrow{\varpi_{1}^{*,\otimes 3}}{}_{r+1}\mathscr{S}$, and the third vertical arrow is given by 
    \begin{align}
        \label{defn_3rd_vertical_map}
        \begin{aligned}
            {\rm Hom}_{\ZZ_p[G_{r+1}]}({}_{r+1}\mathscr{S},\ZZ_p[G_{r+1}]) \,&\,\lra\, {\rm Hom}_{\ZZ_p[G_{r}]}({}_{r}\mathscr{S},\ZZ_p[G_{r}])\\
            \psi=\sum_{g\in {G}_{r+1}}\psi_g\cdot g \,&\,\longmapsto\, \frac{1}{p} \sum_{g'\in {G}_{r}}\sum_{\substack{g \in G_{r+1}\\
            {\rm pr}(g)=g'}} \psi_g\circ \varpi_{1}^{*\otimes 3}\cdot g'\,,
        \end{aligned}
    \end{align}
    where ${\rm pr}: {G}_{r+1}\to G_r$ is the natural projection. 
\end{remark}

\begin{defn}
    \label{defn_Gal_twisted_general} 
    Recall from \S\ref{subsubsec_2022_05_16_1506} the universal cyclotomic character $\bbchi$. We denote by $\langle\bbchi\rangle$ its restriction to the maximal pro-$p$-subgroup $\Gal(\QQ(\mu_{p^\infty})/\QQ(\mu_p))$ of $\Gal(\QQ(\mu_{p^\infty})/\QQ)$, which we also regard as a character of $\Gal(\QQ(\mu_{p^\infty})/\QQ)$ via the natural decomposition $\Gal(\QQ(\mu_{p^\infty})/\QQ)\simeq \Gal(\QQ(\mu_{p^\infty})/\QQ(\mu_p))\times \Gal(\QQ(\mu_{p})/\QQ)$. We let $\langle\bbchi\rangle_r: \Gal(\QQ(\mu_{p^\infty})/\QQ)\to \ZZ_p[G_r]$ denote the compositum
$$\Gal(\QQ(\mu_{p^\infty})/\QQ)\xrightarrow{\langle\bbchi\rangle}\LL(1+p\ZZ_p)\xrightarrow{[1+p^{r-1}]\mapsto 1} \ZZ_p[\ZZ/p^{r-1}\ZZ]\xrightarrow{\rm diag}\ZZ_p[G_r]$$
    where the final arrow is the diagonal map. We denote by $\ZZ_p[G_r]^\dagger$ the free $\ZZ_p[G_r]$-module of rank one on which $G_\QQ$ acts via $\langle\bbchi\rangle_r^{-\frac{1}{2}}$. For any $\ZZ_p[G_r]$-module $M$, we put $M^\dagger:=M \otimes_{\ZZ_p[G_r]}\ZZ_p[G_r]^\dagger$.
\end{defn}

It is clear that $\langle\bbchi\rangle_r$ factors through $\Gal(\QQ(\mu_{p^r})/\QQ)$, and that it is a faithful character of $\Gal(\QQ(\mu_{p^r})/\QQ(\mu_p))$.

\begin{lemma}[\cite{DarmonRotger}, Lemma 2.6]
    \label{lemma_Gal_action_on_bold_Delta}
    \item[i)] For any $a,b,c\in \FF_p^\times$, the class $\Delta_r^{\etale}[[a,b,c]]$ is the image of a unique class (which we also denote by $\Delta_r^{\etale}[[a,b,c]]$) under the compositum of the arrows
    \begin{align*}
        H^1(\QQ(\mu_{p}),  H^1_{\etale} (\overline{X}_1(Np^r), \ZZ_p)^{\otimes 3})(2)^\dagger)\xrightarrow{\rm res} &\, H^1(\QQ(\mu_{p^r}),  H^1_{\etale} (\overline{X}_1(Np^r), \ZZ_p)^{\otimes 3})(2)^\dagger)\\
        &\qquad \xrightarrow{\,\,\sim\,\,} H^1(\QQ(\mu_{p^r}),  H^1_{\etale} (\overline{X}_1(Np^r), \ZZ_p)^{\otimes 3})(2))\,.
    \end{align*}
    \item[ii)] For any $m\in \FF_p^\times$, we have 
    $$\sigma_m\Delta_r^{\etale}[[a,b,c]]=\Delta_r^{\etale}[[m^{-1}a,m^{-1}b,m^{-1}c]]\,.$$
\end{lemma}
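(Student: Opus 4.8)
The plan is to follow the argument of \cite[Lemma 2.6]{DarmonRotger}, deducing both assertions from the Galois-equivariance of the ordinary cycles recorded in Lemma~\ref{DR_Lemma_2_5}(ii) together with a reindexing of the averaged sum \eqref{eqn_2023_11_10_1126}. Since the diamond operators $\langle d_1,d_2,d_3\rangle$ are correspondences, they commute with the Galois action, and both the \'etale Abel--Jacobi map ${\rm cl}_1$ and the K\"unneth projection are equivariant for the $G_\QQ$-action on base fields and coefficients; hence Lemma~\ref{DR_Lemma_2_5}(ii) upgrades to the identity $\sigma_m\,\Delta_r^{\etale}[e_1,e_2,e_3]=\Delta_r^{\etale}[m^{-1}e_1,m^{-1}e_2,m^{-1}e_3]$ for every $m$ prime to $p$, where $\sigma_m\in\Gal(\QQ(\mu_{p^r})/\QQ)$ and the class on each side is viewed in $H^1(\QQ(\mu_{p^r}),H^1_{\etale}(\overline{X}_1(Np^r),\ZZ_p)^{\otimes 3}(2))$.

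For part (i), I would apply this termwise to \eqref{eqn_2023_11_10_1126} for $m$ in the $p$-group $1+p\ZZ/p^r\ZZ=\Gal(\QQ(\mu_{p^r})/\QQ(\mu_p))$, and then reindex the sum by $d_i\mapsto m^{1/2}d_i$, which is a well-defined bijection of $G_r$ since $p$ is odd and $G_r$ is a finite abelian $p$-group. This substitution leaves the triple $(d_2d_3a,d_1d_3b,d_1d_2c)$ unchanged (the factor $m^{-1}$ introduced by $\sigma_m$ cancels $m^{1/2}\cdot m^{1/2}$) while sending $\langle d_1,d_2,d_3\rangle$ to $\langle m^{1/2},m^{1/2},m^{1/2}\rangle\langle d_1,d_2,d_3\rangle$, so that
\[
\sigma_m\,\Delta_r^{\etale}[[a,b,c]]=\langle m,m,m\rangle^{1/2}\cdot\Delta_r^{\etale}[[a,b,c]]\qquad(m\in 1+p\ZZ/p^r\ZZ).
\]
Threading this through the identifications $\texttt{G}$ and Poincar\'e duality of Remark~\ref{remark_2023_11_13_1509}, the operator $\langle m,m,m\rangle^{1/2}$ is identified with multiplication by $\langle\bbchi\rangle_r(\sigma_m)^{1/2}\in\ZZ_p[G_r]^\times$; by the definition of the $\dagger$-twist in Definition~\ref{defn_Gal_twisted_general}, the displayed formula says exactly that $\Delta_r^{\etale}[[a,b,c]]$, viewed with coefficients in $(\,\cdot\,)^\dagger$, is $\Gal(\QQ(\mu_{p^r})/\QQ(\mu_p))$-invariant. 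To descend it uniquely to $H^1(\QQ(\mu_p),(\,\cdot\,)^\dagger)$ I would invoke inflation--restriction: its outer terms are the cohomology of $\Gal(\QQ(\mu_{p^r})/\QQ(\mu_p))$ with coefficients in $\big(H^1_{\etale}(\overline{X}_1(Np^r),\ZZ_p)^{\otimes 3}(2)^\dagger\big)^{G_{\QQ(\mu_{p^r})}}$, and this $\ZZ_p$-module vanishes --- it is torsion-free (the integral cohomology of a smooth projective curve is), while its $\QQ_p$-ification has no $G_{\QQ(\mu_{p^r})}$-fixed vectors by the Weil bounds (a Frobenius at a good prime $q\equiv 1\bmod p^r$ acts on $H^1_{\etale}{}^{\otimes 3}(2)$ with eigenvalues of absolute value $q^{-1/2}\neq 1$), and $\langle\bbchi\rangle_r$ is trivial on $G_{\QQ(\mu_{p^r})}$. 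Hence restriction is an isomorphism onto the invariants, yielding both existence and uniqueness in (i).

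For part (ii), using (i) I regard $\Delta_r^{\etale}[[a,b,c]]$ in $H^1(\QQ(\mu_p),(\,\cdot\,)^\dagger)$, on which $\Gal(\QQ(\mu_p)/\QQ)\cong\FF_p^\times$ acts, and compute the action of $\sigma_m$ after restricting to $\QQ(\mu_{p^r})$ (injective by the argument in (i)) and lifting $\sigma_m$ to the Teichm\"uller representative of $m$ in $\Gal(\QQ(\mu_{p^r})/\QQ)$. That lift has prime-to-$p$ order, so $\langle\bbchi\rangle_r$ sends it to $1$ and the $\dagger$-twist contributes no correction; applying it termwise to \eqref{eqn_2023_11_10_1126} via Lemma~\ref{DR_Lemma_2_5}(ii), together with $m^{-1}(d_jd_k a)=d_jd_k(m^{-1}a)$ (reading $m^{-1}a$ as a Teichm\"uller lift), gives at once $\sigma_m\,\Delta_r^{\etale}[[a,b,c]]=\Delta_r^{\etale}[[m^{-1}a,m^{-1}b,m^{-1}c]]$, which transports back to $\QQ(\mu_p)$.

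I expect the one genuinely delicate point to be the bookkeeping in (i): checking that $\langle m,m,m\rangle^{1/2}$ coincides with $\langle\bbchi\rangle_r(\sigma_m)^{1/2}$ exactly --- including the normalization $[1+p^{r-1}]\mapsto 1$ defining $\langle\bbchi\rangle_r$ and the behaviour of the diamond action under Poincar\'e duality and the isomorphism $\texttt{G}$ of Remark~\ref{remark_2023_11_13_1509}. By contrast the vanishing of the coefficient invariants is an immediate consequence of purity, and the remainder of the proof is the reindexing of a finite sum.
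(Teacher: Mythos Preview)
Your proposal is correct and follows essentially the same route as the paper's proof: both reduce (i) to the termwise identity $\sigma_m\Delta_r^{\etale}[e_1,e_2,e_3]=\Delta_r^{\etale}[m^{-1}e_1,m^{-1}e_2,m^{-1}e_3]$ (Lemma~\ref{DR_Lemma_2_5}(ii)), perform the reindexing $d_i\mapsto m^{1/2}d_i$ to extract the factor $\langle m,m,m\rangle^{1/2}$, and invoke inflation--restriction to descend, while (ii) is read off directly from the same termwise identity. Your argument is in fact slightly more careful than the paper's: you supply the purity justification for the vanishing of the coefficient invariants that makes the inflation--restriction step go through, whereas the paper simply asserts that restriction is injective.
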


\begin{proof}
    The uniqueness claim in (i) follows from the inflation-restriction sequence (which tells us that the relevant restriction map is injective). To prove the existence (also by the inflation-restriction sequence), we must show that for any integer $m\equiv 1 \pmod p$, we have
    $$\sigma_m\Delta_r^{\etale}[[a,b,c]]=\langle m,m,m\rangle^{\frac{1}{2}}\Delta_r^{\etale}[[a,b,c]]\,.$$
    Unravelling the definition of $\Delta_r^{\etale}[[a,b,c]]$ (cf. Equation~\eqref{eqn_2023_11_10_1126}), this is equivalent to checking that 
    \begin{align*}\sum_{\langle \underline{d}\rangle=\langle d_1,d_2,d_3\rangle \in G_r}\Delta_r^{\etale}[d_2d_3a,d_1d_3b,d_1d_2c]^{\sigma_m}\,\langle \underline{d}\rangle&\,=\sum_{\langle d_1,d_2,d_3\rangle \in G_r}\Delta_r^{\etale}[d_2d_3a,d_1d_3b,d_1d_2c]\,\langle m^{\frac{1}{2}}d_1, m^{\frac{1}{2}}d_2, m^{\frac{1}{2}}d_3\rangle\\
    &\,=\sum_{\langle \underline{d}\rangle=\langle d_1,d_2,d_3\rangle \in G_r}\Delta_r^{\etale}[m^{-1}d_2d_3a,m^{-1}d_1d_3b,m^{-1}d_1d_2c]
    \,\langle \underline{d}\rangle\,.
    \end{align*}
    We therefore have reduced to checking that
    $$\Delta_r^{\etale}[e_1,e_2,e_3]^{\sigma_m}=\Delta_r^{\etale}[m^{-1}e_1,m^{-1}e_2,m^{-1}e_3]\,,$$
    which is immediate from \eqref{eqn_2023_11_11_1700}.

    The proof of the second part also follows from \eqref{eqn_2023_11_11_1700}.
\end{proof}

\begin{defn}
    \label{definition_2023_11_11_1723}
    Following \cite[Eqn. (2.27)]{DarmonRotger}, we put 
    $\Delta_r^{\etale}:=\frac{p^3}{(p-1)^3}\sum_{a,b,c\in \FF_p^\times}\Delta_r^{\etale}[[bc,ac,ab]]$.
\end{defn}
We then have 
$$\Delta_r^{\etale}\in H^1(\QQ,H^1_{\etale} (\overline{X}_1(Np^r), \ZZ_p)^{\otimes 3}(2)^\dagger)\,,$$ 
cf. the proof of \cite[Lemma 2.10]{DarmonRotger}.
\begin{remark}
    The relation between $\Delta_r^{\etale}$ and $\Delta_r^{\etale}[1,1,1]$ is akin to the relation between $\mathfrak{X}_{1,s}$ in \cite[Eqn. 8]{howard2007Inventiones} and the Heegner class $Q_\p$ given as in \cite[p. 809]{howard2007Central}.
\end{remark}

\subsubsection{Big diagonal cycles: Construction}
\label{subsubsec_426_2023_11_17_1541}
The following behaviour of the diagonal cycles under the degeneracy map $\varpi_1$ is key to the construction of big diagonal cycles.
\begin{lemma}[\cite{DarmonRotger}, Lemma 2.5]
\label{DR_lemma_2_5_dual}
   For any positive integer $r$ as well as a triple $(d_1',d_2',d_3')\in \widetilde{G}_{r+1}$ whose image in $\widetilde{G}_r$ is $(d_1,d_2,d_3)$, we have 
$$\varpi_{1,*}^{\otimes 3}\,\Delta_{r+1}^{\rm ord}[d_1',d_2',d_3']=U_p'^{\otimes 3}\,\Delta_r^{\rm ord}[d_1,d_2,d_3]\,.$$
\end{lemma}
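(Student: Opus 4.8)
This is \cite[Lemma 2.5]{DarmonRotger}, adapted to the cycles $\Delta_r^{\rm ord}[d_1,d_2,d_3]=(e_{\rm ord}'W_{Np^r})^{\otimes 3}\Delta_r[d_1,d_2,d_3]$ in place of the cycles $\Delta_r^{\circ}[\cdots]$ of op. cit.; since the Atkin--Lehner twist and the ordinary projector $e_{\rm ord}'$ have been built into our cycles, the only content beyond op. cit. is to check that these extra decorations are compatible with the pushforward $\varpi_{1,*}$, the rest being a moduli-theoretic computation we may simply quote.

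\textbf{Step 1 (raw pushforward).} The first step is to compute $\varpi_{1,*}^{\otimes 3}\Delta_{r+1}[d_1',d_2',d_3']$ directly. Since $\varpi_1^{r+1}=\varpi_1^{r}\circ\varpi_1$, the cycle $\Delta_{r+1}$ is the preimage of $\Delta_r$ under $(\varpi_1)^{\otimes 3}\colon X_1(Np^{r+1})^3\to X_1(Np^r)^3$, and $\Delta_{r+1}[d_1',d_2',d_3']$ is the geometrically irreducible component of this preimage cut out by the Weil-pairing conditions $\langle P_i,P_j\rangle=\zeta_{p^{r+1}}^{d_k'}$. Pushing forward along $(A,P)\mapsto(A,pP)$, whose fibres are parametrized by the order-$p$ subgroups of the cyclic part, and using $\zeta_{p^{r+1}}^{\,p}=\zeta_{p^r}$ together with bilinearity of the Weil pairing, one identifies the image with the $U_p$-transform (in the normalization of op. cit.) of the component $\Delta_r[d_1,d_2,d_3]$, where $(d_1,d_2,d_3)$ is the image of $(d_1',d_2',d_3')$ in $\widetilde{G}_r$. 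This is exactly \cite[Lemma 2.5]{DarmonRotger}, which I would invoke verbatim.

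\textbf{Step 2 (decorations).} It then remains to carry the Atkin--Lehner operators and the ordinary projectors through the identity. The operators $W_{Np^{r+1}}$ and $W_{Np^r}$ are interchanged by the two degeneracy maps $X_1(Np^{r+1})\rightrightarrows X_1(Np^r)$ (cf. \cite[Eqn. 1.5.4]{ohta99} and the discussion in \S\ref{subsubsec_118_2024_07_02_1311}), and $W_{Np^r}$ conjugates $U_p$ into $U_p'$; since $e_{\rm ord}'=\lim_n (U_p')^{n!}$ is a limit of polynomials in $U_p'$, the projector commutes with $W_{Np^r}$-conjugation and with the decorated pushforward of Step~1. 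Combining these formal compatibilities with Step~1 yields
\[
\varpi_{1,*}^{\otimes 3}\Delta_{r+1}^{\rm ord}[d_1',d_2',d_3']=(e_{\rm ord}'W_{Np^r})^{\otimes 3}\,U_p^{\otimes 3}\,\Delta_r[d_1,d_2,d_3]=U_p'^{\otimes 3}\,(e_{\rm ord}'W_{Np^r})^{\otimes 3}\,\Delta_r[d_1,d_2,d_3]=U_p'^{\otimes 3}\,\Delta_r^{\rm ord}[d_1,d_2,d_3]\,,
\]
which is the assertion.

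\textbf{Main obstacle.} The delicate part is the bookkeeping in Step~1: keeping track of \emph{which} degeneracy map and \emph{which} root of unity ($\zeta_{p^{r+1}}$ versus $\zeta_{p^r}$) enters each Weil-pairing condition, and verifying that the pushforward of the \emph{single} component $\Delta_{r+1}[d_1',d_2',d_3']$ lands on the $U_p$-translate of the correct component $\Delta_r[d_1,d_2,d_3]$ with the right multiplicity and no spurious components. Since this is precisely \cite[Lemma 2.5]{DarmonRotger}, the plan is to quote it for the raw identity and to supply only the (formal) compatibility with the Atkin--Lehner and ordinary-projector decorations recorded in Step~2.
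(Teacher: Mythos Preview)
Your approach is essentially the same as the paper's: quote \cite[Lemma~2.5]{DarmonRotger} for the raw pushforward identity on the undecorated cycles, and then use the standard compatibilities of the Atkin--Lehner operators with degeneracy maps and with $U_p$/$U_p'$ to pass to the cycles $\Delta_r^{\rm ord}[\cdots]=(e_{\rm ord}'W_{Np^r})^{\otimes 3}\Delta_r[\cdots]$. The paper's proof says exactly this, in one sentence.

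There is, however, a bookkeeping slip in your Step~1. The second statement of \cite[Lemma~2.5]{DarmonRotger} concerns $\varpi_{2,*}$, not $\varpi_{1,*}$: it asserts $\varpi_{2,*}^{\otimes 3}\Delta_{r+1}[d_1',d_2',d_3']=U_p^{\otimes 3}\Delta_r[d_1,d_2,d_3]$. This is precisely why the paper invokes the fact that the Atkin--Lehner operators \emph{interchange} $\varpi_{1,*}$ and $\varpi_{2,*}$ (i.e.\ $\varpi_{1,*}W_{Np^{r+1}}=W_{Np^r}\varpi_{2,*}$), not merely that they commute through. Your Step~2 display
\[
\varpi_{1,*}^{\otimes 3}\Delta_{r+1}^{\rm ord}[d_1',d_2',d_3']=(e_{\rm ord}'W_{Np^r})^{\otimes 3}\,U_p^{\otimes 3}\,\Delta_r[d_1,d_2,d_3]
\]
is correct, but the way to reach it is
\[
\varpi_{1,*}^{\otimes 3}(e_{\rm ord}'W_{Np^{r+1}})^{\otimes 3}\Delta_{r+1}[\cdots]
=(e_{\rm ord}'W_{Np^r})^{\otimes 3}\varpi_{2,*}^{\otimes 3}\Delta_{r+1}[\cdots]
=(e_{\rm ord}'W_{Np^r})^{\otimes 3}U_p^{\otimes 3}\Delta_r[\cdots],
\]
using the swap and then the $\varpi_{2,*}$-identity from \cite{DarmonRotger}, rather than a (nonexistent) $\varpi_{1,*}$-identity combined with the false relation $\varpi_{1,*}W_{Np^{r+1}}=W_{Np^r}\varpi_{1,*}$. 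Once this is corrected, your Step~2 (using $W_{Np^r}U_p=U_p'W_{Np^r}$ and the commutation of $e_{\rm ord}'$ with $U_p'$) goes through verbatim and coincides with the paper's argument.
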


\begin{proof}
    This is an immediate consequence of the second statement in \cite[Lemma 2.5]{DarmonRotger}, noting that the Atkin--Lehner operators (that appear in the definition \eqref{eqn_2023_11_08_1351} of $\Delta_r^{\rm ord}[d_1,d_2,d_3]$) interchange $\varpi_{1,*}$ and $\varpi_{2,*}$, and intertwine $U_p$ and $U_p'$.
\end{proof}

\begin{corollary}
    \label{cor_DR_lemma_2_5_dual} Suppose that $a,b,c\in \FF_p^\times$ are arbitrary elements. Recall that we regard them as elements of $(\ZZ/p^{r}\ZZ)^\times$ via their Teichm\"uller lifts.
    \item[i)] We have $$\varpi_{1,*}^{\otimes 3}\,Z_{r+1}=U_p'^{\otimes 3}\,Z_r\,, \qquad  Z_j=\Delta_j^{\etale}[a,b,c]\hbox{ or } \Delta_j^{\etale}[[a,b,c]]\,.$$ 
    \item[ii)] $\varpi_{1,*}^{\otimes 3}\,\Delta_{r+1}^{\etale}=U_p'^{\otimes 3}\,\Delta_{r}^{\etale}\,.$
\end{corollary}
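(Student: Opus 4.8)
The plan is to propagate the cycle-level identity of Lemma~\ref{DR_lemma_2_5_dual} through the \'etale realization — which is functorial for algebraic correspondences and for base change of the field of definition — and then through the purely formal manipulations that assemble $\Delta_r^{\etale}[[a,b,c]]$ and $\Delta_r^{\etale}$ out of the classes $\Delta_r^{\etale}[d_1,d_2,d_3]$. I would organize the argument in three steps: the single-bracket case of (i), the double-bracket case of (i), and finally (ii).

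First I would establish the single-bracket half of (i) in the stronger form valid for an \emph{arbitrary} triple $(d_1',d_2',d_3')\in\widetilde{G}_{r+1}$ mapping to $(d_1,d_2,d_3)\in\widetilde{G}_r$: one has $\varpi_{1,*}^{\otimes3}\,\Delta_{r+1}^{\etale}[d_1',d_2',d_3']=U_p'^{\otimes3}\,\Delta_r^{\etale}[d_1,d_2,d_3]$, where the right-hand class (defined a priori over $\QQ(\mu_{p^r})$) is restricted to $\QQ(\mu_{p^{r+1}})$. Indeed, $\Delta_r^{\etale}[d_1,d_2,d_3]=\texttt{K}\circ{\rm cl}_1(\Delta_r^{\rm ord}[d_1,d_2,d_3])$ by construction, and $\varpi_{1,*}^{\otimes3}$ and $U_p'^{\otimes3}$ are induced by correspondences defined over $\QQ$ that act compatibly on the $\ZZ_p$-completed null-homologous Chow groups and on the \'etale cohomology of the cubes $X_1(Np^\bullet)^3$; since ${\rm cl}_0$, ${\rm cl}_1$ and the K\"unneth projector $\texttt{K}$ commute with such correspondences and with restriction of the field of definition (cf. Remark~\ref{remark_2023_11_21_1623}), I may commute $\varpi_{1,*}^{\otimes3}$ past $\texttt{K}\circ{\rm cl}_1$, apply Lemma~\ref{DR_lemma_2_5_dual}, and then commute $U_p'^{\otimes3}$ back out. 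Specializing to the Teichm\"uller lifts of a fixed $(a,b,c)\in(\FF_p^\times)^3$, which are compatible under $\widetilde{G}_{r+1}\to\widetilde{G}_r$, yields the stated case $Z_j=\Delta_j^{\etale}[a,b,c]$.

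Next I would deduce the case $Z_j=\Delta_j^{\etale}[[a,b,c]]$. Recall from \eqref{eqn_2023_11_10_1126} that $\Delta_r^{\etale}[[a,b,c]]=\sum_{\langle d_1,d_2,d_3\rangle\in G_r}\Delta_r^{\etale}[d_2d_3a,d_1d_3b,d_1d_2c]\,\langle d_1,d_2,d_3\rangle$, read through $\texttt{G}^{-1}$ and Poincar\'e duality, and living — after the descent of Lemma~\ref{lemma_Gal_action_on_bold_Delta}(i) — over $\QQ(\mu_p)$, a field independent of $r$. The interaction of this packaging with $\varpi_{1,*}^{\otimes3}$ is exactly what the commutative diagram \eqref{eqn_2023_11_13_1224} records: its outer vertical arrows are $\varpi_{1,*}^{\otimes3}$, and its third vertical arrow \eqref{defn_3rd_vertical_map} is the transport of $\varpi_{1,*}^{\otimes3}$ through $\texttt{G}^{\pm1}$, bringing in the dual map $\varpi_1^{*\otimes3}$, an average over the fibres of ${\rm pr}\colon G_{r+1}\to G_r$, and the normalizing factor $\tfrac1p$ (which also absorbs the reduction of the diamond indices forced by $\varpi_1\circ\langle d\rangle_{r+1}=\langle\bar d\rangle_r\circ\varpi_1$). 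Chasing $\Delta_{r+1}^{\etale}[[a,b,c]]$ around this diagram, using the single-bracket case to replace each $\varpi_{1,*}^{\otimes3}\Delta_{r+1}^{\etale}[\,\cdot\,]$ by $U_p'^{\otimes3}\Delta_r^{\etale}[\,\cdot\,]$, and reindexing the diamond/Galois action via \eqref{eqn_2023_11_01_1241} and \eqref{eqn_2023_11_11_1700}, should collapse the sum over $G_{r+1}$ to the sum over $G_r$ defining $U_p'^{\otimes3}\,\Delta_r^{\etale}[[a,b,c]]$. Finally, (ii) is immediate: by Definition~\ref{definition_2023_11_11_1723} one has $\Delta_r^{\etale}=\frac{p^3}{(p-1)^3}\sum_{a,b,c\in\FF_p^\times}\Delta_r^{\etale}[[bc,ac,ab]]$ with coefficient independent of $r$, so applying $\varpi_{1,*}^{\otimes3}$ and the case just proved termwise gives $\varpi_{1,*}^{\otimes3}\,\Delta_{r+1}^{\etale}=U_p'^{\otimes3}\,\Delta_r^{\etale}$.

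The functorialities invoked in the first step are all already in place in \cite{DarmonRotger}, so I expect the only real obstacle to be the second step: checking that the combinatorial repackaging \eqref{eqn_2023_11_10_1126}--\eqref{eqn_2023_11_13_1224} genuinely intertwines $\varpi_{1,*}^{\otimes3}$ at levels $r$ and $r+1$ with the correct normalization — i.e. that the $\tfrac1p$ and the fibre-sum over ${\rm pr}\colon G_{r+1}\to G_r$ in \eqref{defn_3rd_vertical_map} precisely offset the discrepancy between $|G_{r+1}|$, $|G_r|$ and the effect of $U_p'^{\otimes3}$ — all while keeping the field-of-definition bookkeeping (over $\QQ(\mu_p)$ after the $\dagger$-twist, over $\QQ(\mu_{p^r})$ before it; cf. Lemma~\ref{lemma_Gal_action_on_bold_Delta}) under control.
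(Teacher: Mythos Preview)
Your proposal is correct and follows essentially the same path as the paper. The single-bracket case of (i) is proved exactly as you say (Lemma~\ref{DR_lemma_2_5_dual} plus Hecke-equivariance of ${\rm cl}_1$ and the K\"unneth projector), and (ii) follows from (i) and Definition~\ref{definition_2023_11_11_1723} termwise, as you indicate.

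The only divergence is in the double-bracket case of (i). The paper's primary route is via Remark~\ref{remark_2023_11_13_1509}(i): since $\Delta_r^{\etale}[[a,b,c]]$ is \emph{literally} the image of $\Delta_r^{\etale}[a,b,c]$ under a chain of natural isomorphisms, the identity for the double-bracket class follows from the single-bracket identity as soon as one knows those isomorphisms intertwine $\varpi_{1,*}^{\otimes 3}$ at levels $r$ and $r+1$ --- which is precisely the content of the commutative diagram~\eqref{eqn_2023_11_13_1224}. Your approach, by contrast, unwinds the diagram chase through~\eqref{defn_3rd_vertical_map} explicitly; this is the alternative the paper relegates to a footnote. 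Both are correct, but the paper's phrasing via Remark~\ref{remark_2023_11_13_1509}(i) sidesteps the normalization bookkeeping you flag as the ``only real obstacle'': once the outer square of~\eqref{eqn_2023_11_13_1224} commutes, there is nothing further to check, because the $\tfrac{1}{p}$ and the fibre-sum in~\eqref{defn_3rd_vertical_map} are forced by the requirement that the diagram commute, not something to be verified against $U_p'^{\otimes 3}$ separately.
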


\begin{proof}
    The first claim with $Z_j=\Delta_j^{\etale}[a,b,c]$ follows immediately from definitions and Lemma~\ref{DR_lemma_2_5_dual} combined with the Hecke-equivariance of the Abel--Jacobi map ${\rm cl}_1$, whereas the same claim with $Z_j=\Delta_j^{\etale}[[a,b,c]]$ follows from the first\footnote{One may give a direct proof of this assertion with $Z_j=\Delta_j^{\etale}[[a,b,c]]$, using the description in Remark~\ref{defn_3rd_vertical_map}(ii) of the morphism
    $${\rm Hom}_{\ZZ_p[G_{r+1}]}({}_{r+1}\mathscr{S},\ZZ_p[G_{r+1}]) \,\lra\, {\rm Hom}_{\ZZ_p[G_{r}]}({}_{r}\mathscr{S},\ZZ_p[G_{r}])\,.$$
    } in view of Remark~\ref{remark_2023_11_13_1509}(i). 
\end{proof}

Recall from Definition~\ref{defn_Gal_twisted_general} the character $\langle\bbchi\rangle$. Let us put $G_\infty:=\varprojlim_r G_r$. We define $\langle \bbchi^{\otimes 3}\rangle: \Gal(\QQ(\mu_{p^\infty})/\QQ)\to \ZZ_p[[G_\infty]]$ as the compositum
$$ \Gal(\QQ(\mu_{p^\infty})/\QQ)\xrightarrow{\langle\bbchi\rangle} \LL(1+p\ZZ_p)\xrightarrow{\,\,\rm diag\,\,} \ZZ_p[[G_\infty]]\,.$$
We denote by $\ZZ_p[[G_\infty]]^\dagger$ the free $\ZZ_p[[G_\infty]]$-module of rank one on which $G_\QQ$ acts via $\langle \bbchi^{\otimes 3}\rangle^{-\frac{1}{2}}$. For any $\ZZ_p[[G_\infty]]$-module $M$, we put $M^\dagger:=M \otimes_{\ZZ_p[[G_\infty]]}\ZZ_p[[G_\infty]]^\dagger$.

\begin{defn}
    \label{definition_big_Heegner_main}
    We define the big diagonal cycle $\Delta_\infty^{\etale}$ on setting
    $$\Delta_\infty^{\etale}:=\{(U_p'^{\otimes 3})^{-r}\Delta_{r}^{\etale}\}_r\in \varprojlim_{\varpi_{1,*}^{\otimes 3},\, r}  H^1(\QQ,{\rm e}_{\rm ord}'H^1_{\etale}(Y_1(Np^r),\ZZ_p)^{\otimes 3}(2)^\dagger)=H^1(\QQ,H^1_{\rm ord}(Y_1(Np^\infty))^{\otimes 3}(-1)^\dagger)\,.$$
\end{defn}

We note that this definition makes sense thanks to Corollary~\ref{cor_DR_lemma_2_5_dual}, relying also on the fact that $U_p'^{\otimes 3}$ acts invertibly on 
$${\rm e}_{\rm ord}' H^1_{\etale}(Y_1(Np^r),\ZZ_p)^{\otimes 3}(2)^\dagger=:H^1_{\rm ord}(Y_1(Np^\infty))^{\otimes 3}(-1)^\dagger\,,$$
which is where the cohomology class $\Delta_{r}^{\etale}$ takes coefficients in.

\begin{defn}
    \label{definition_big_diagonal_Hida_families}
    Let $(\f,\g,\h)$ denote a triple of primitive Hida families with tame levels $(N_\f, N_\g, N_\h)$ and tame nebentype characters verifying\footnote{As a matter of fact, a stronger version of this hypothesis (recorded in \S\ref{subsubsec_root_numbers_2024_02_09_1652}) is enforced throughout our paper.} $\varepsilon_\f\varepsilon_\g\varepsilon_\h =\mathds{1}$. Let us put $N:={\rm lcm}(N_\f, N_\g, N_\h)$ and denote by 
    $$\Delta^{\etale}(\f,\g,\h)\in H^1(\QQ,T_{\f\g\h}^\dagger)$$
    the associated big diagonal cycle, where
    \begin{itemize}
        \item $T_{\f\g\h}=T_\f \widehat{\otimes} T_{\g}\widehat{\otimes} T_{\h}$,
        \item $T_{\f\g\h}^\dagger:=T_{\f\g\h}\otimes (\bbchi_\f\bbchi_\g\bbchi_\h)^{-\frac{1}{2}}\chi_{\cyc}^{-1}$\,,
    \end{itemize}
    which is given as the image of $\Delta_\infty^{\etale}$ under the map induced from the compositum of the following arrows:
    $$H^1_{\rm ord}(Y_1(Np^\infty))^{\otimes 3}\xrightarrow{\varpi_{1,*}\otimes \varpi_{1,*} \otimes \varpi_{1,*}} H^1_{\rm ord}(Y_1(N_\f p^\infty))\,\widehat{\otimes}\,H^1_{\rm ord}(Y_1(N_\g p^\infty))\,\widehat{\otimes}\,H^1_{\rm ord}(Y_1(N_\h p^\infty))\lra T_{\f\g\h}\,,$$
    where the final arrow is obtained using the definition of $T_{\f\g\h}$, cf.  \eqref{eqn_2023_11_13_1642}.
\end{defn}

We conclude this subsection with the following definition, restricting our attention to the particular scenario where $\h=\g^c$:
\begin{defn}
    \label{defn_2023_11_15_1209}
    Let us denote by $\Delta^\etale(\f\otimes {\rm ad}(\g)) \in H^1(\QQ,T)$ the image of the cohomology class $\Delta^{\etale}(\f,\g,\g^c)\in H^1(\QQ,T_{\f\g\g^c}^\dagger)$ under the natural map
    $$H^1(\QQ,T_{\f\g\g^c}^\dagger)\xrightarrow{\eqref{eqn_2023_09_26_1043}\,\circ\, \iota_{2,3}^*} H^1(\QQ,T)\,.$$
\end{defn}

\subsubsection{Specializations of big diagonal cycles}
\label{subsubsec_2023_10_02_1711}
Let us consider the set 
$$\cA^{(2)}_r:=\{(\kappa,\lambda)\in \cW_2: {\rm wt}(\kappa)=2={\rm wt}(\lambda) \hbox{ and } \f_\kappa,\g_\lambda \hbox{ are new of respective levels } N_\f p^r \hbox{ and } N_\g p^r\}$$
of arithmetic specializations. Let us put $\cA^{(2)}:=\cup_r \cA^{(2)}_r$ and observe that $\cA^{(2)}$ is a dense subset of $\cW_2$. Our main goal in the present subsection is to explicitly describe the specializations of $\Delta^\etale(\f\otimes {\rm ad}(\g))$ to $(\kappa,\lambda)\in \cA^{(2)}$. We begin our discussion with the following variation of \eqref{eqn_2023_11_10_1126} and define, for each $(\kappa,\lambda)\in \cA_r^{(2)}$,
\begin{equation}
    \label{eqn_2023_11_15_1243}
    \Delta_r^{\rm ord}(\kappa,\lambda):=\sum_{(d_1,d_2,d_3)\in \widetilde{G}_r} \Delta_r^{\rm ord}[d_2d_3,d_3d_1,d_1d_2] \cdot \psi_\kappa(d_1)\,\,\in\,\, {\rm CH}_0^2(X_1(Np^r)^3;\QQ(\mu_{p^r}))_{\ZZ_p}\otimes_{\ZZ_p}\cO_{\kappa,\lambda}
\end{equation}
where $\cO_{\kappa,\lambda}$ is the ring of integers of the field generated over $\QQ_p$ by the Hecke fields of $\f_\kappa$, $\g_\lambda$ and $\g^c_\lambda$. We further denote by 
$\Delta_r^\etale(\kappa,\lambda)\in H^1(\QQ(\mu_{p^r}),T)$ the image of $ \Delta_r^{\rm ord}(\kappa,\lambda)$ under the composition of the following arrows:
\begin{align}
    \label{eqn_2023_11_15_1557}
    \begin{aligned}
        {\rm CH}_0^2(X_1(Np^r)^3;\QQ(\mu_{p^r}))_{\ZZ_p}\otimes_{\ZZ_p}\cO_{\kappa,\lambda} \xrightarrow{\,\,{\rm cl}_1\,\,} H^1(\QQ(\mu_{p^r}), & \,H^1_{\etale}(\overline{X}_1(Np^r),\ZZ_p)^{\otimes 3}(2))\\
        &\xrightarrow{{\rm pr}_{\f_\kappa}\otimes {\rm pr}_{\g_\lambda} \otimes {\rm pr}_{\g^c_\lambda} } H^1(\QQ(\mu_{p^r}),T)\,,
    \end{aligned}
\end{align}
    where, for a cuspidal newform $h$ of level $\Gamma_1(N_hp^r)$ with $N_h\mid N$, the map ${\rm pr}_h$ is induced from the pushforward map 
    $H^1_{\etale}(\overline{X}_1(Np^r),\ZZ_p)(1)\xrightarrow{\varpi_{1,*}} H^1_{\etale}(\overline{X}_1(N_hp^r),\ZZ_p)(1)$ composed with projection to the $h$-isotypical Hecke eigenspace. 

\begin{lemma}
    \label{lemma_2023_11_14_1442}
    Suppose that $(\kappa,\lambda)\in \cA_r^{(2)}$. Then we have
    $$\res_{\QQ(\mu_{p^r})/\QQ}\,\,\Delta^\etale(\f\otimes {\rm ad}(\g))_{\vert_{(\kappa,\lambda)}}=a_p(\f_\kappa)^{-r}a_p(\g_\lambda)^{-2r} \Delta_r^\etale(\kappa,\lambda)\,,$$
    where $\Delta^\etale(\f\otimes {\rm ad}(\g))_{\vert_{(\kappa,\lambda)}}$ is the image of $\Delta^\etale(\f\otimes {\rm ad}(\g))$ under the specialization map
    $$H^1(\QQ,T)=H^1(\QQ,T_\f^\dagger\otimes {\rm ad}(T_\g))\xrightarrow{\,\kappa\otimes\lambda\,} H^1(\QQ,T_{\f_\kappa}^\dagger\otimes {\rm ad}(T_{\g_\lambda}))\,.$$
\end{lemma}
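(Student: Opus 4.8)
The plan is to unwind both sides of the asserted identity down to level $\QQ(\mu_{p^r})$ and to match them term by term, in the spirit of the comparison between $\mathfrak{X}$ and $Q_\kappa$ recorded in Lemma~\ref{lemma_2023_11_18_1624}. First, by Definitions~\ref{defn_2023_11_15_1209}, \ref{definition_big_diagonal_Hida_families} and~\ref{definition_big_Heegner_main}, the class $\Delta^\etale(\f\otimes{\rm ad}(\g))$ is the image of the norm-compatible system $\Delta_\infty^\etale=\{(U_p'^{\otimes 3})^{-r}\Delta_r^\etale\}_r$ under the pushforwards $\varpi_{1,*}^{\otimes 3}$ to $H^1_{\rm ord}(Y_1(N_\f p^\infty))\widehat\otimes H^1_{\rm ord}(Y_1(N_\g p^\infty))\widehat\otimes H^1_{\rm ord}(Y_1(N_\g p^\infty))$, then the identification with $T_{\f\g\g^c}^\dagger$, and finally $\iota_{2,3}^*$ followed by~\eqref{eqn_2023_09_26_1043}, landing in $T=T_\f^\dagger\widehat\otimes{\rm ad}(T_\g)$. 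Because $(\kappa,\lambda)\in\cA^{(2)}_r$ forces $\f_\kappa$ and $\g_\lambda$ (hence $\g^c_\lambda$) to be newforms of exact levels $N_\f p^r$ and $N_\g p^r$, the factorization of the specialization map recalled in~\S\ref{subsubsec_222_2023_11_16_1203} shows that, once restricted to $\QQ(\mu_{p^r})$, the composite ``push forward to $H^1_{\rm ord}(Y_1(N_\bullet p^\infty))$ and then specialize at $\kappa$ (resp.\ $\lambda$)'' agrees with the projector ${\rm pr}_{\f_\kappa}\otimes{\rm pr}_{\g_\lambda}\otimes{\rm pr}_{\g^c_\lambda}$ of~\eqref{eqn_2023_11_15_1557} applied at level $r$. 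Thus $\res_{\QQ(\mu_{p^r})/\QQ}\,\Delta^\etale(\f\otimes{\rm ad}(\g))_{\vert_{(\kappa,\lambda)}}$ equals the image of $(U_p'^{\otimes 3})^{-r}\Delta_r^\etale$ under ${\rm pr}_{\f_\kappa}\otimes{\rm pr}_{\g_\lambda}\otimes{\rm pr}_{\g^c_\lambda}$.

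For the Euler factor, the operator $U_p'$ acts on the $h$-isotypic component through the $p$-th Hecke eigenvalue of $h$; keeping track of the Atkin--Lehner twists built into the definition~\eqref{eqn_2023_11_08_1351} of $\Delta_r^{\rm ord}[d_1,d_2,d_3]$ (which interchange $U_p$ and $U_p'$, cf.\ the proof of Lemma~\ref{DR_lemma_2_5_dual}) together with the relation $\g^c=\g\otimes\varepsilon_\g^{-1}$, one checks that the normalizing factor $(U_p'^{\otimes 3})^{-r}$ acts on the relevant eigenspace by $a_p(\f_\kappa)^{-r}a_p(\g_\lambda)^{-2r}$. This is the triple-product counterpart of the factor $a_p(\f_\kappa)^{-s}$ appearing in Lemma~\ref{lemma_2023_11_18_1624}.

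It then remains to identify $({\rm pr}_{\f_\kappa}\otimes{\rm pr}_{\g_\lambda}\otimes{\rm pr}_{\g^c_\lambda})\bigl(\res_{\QQ(\mu_{p^r})/\QQ}\,\Delta_r^\etale\bigr)$ with $\Delta_r^\etale(\kappa,\lambda)$ as defined in~\eqref{eqn_2023_11_15_1243}--\eqref{eqn_2023_11_15_1557}. By Lemma~\ref{lemma_Gal_action_on_bold_Delta}(i) the $\dagger$-twist is canonically trivial upon restriction to $G_{\QQ(\mu_{p^r})}$, so one may work with the untwisted classes. Unwinding Definition~\ref{definition_2023_11_11_1723}, i.e.\ $\Delta_r^\etale=\tfrac{p^3}{(p-1)^3}\sum_{a,b,c\in\FF_p^\times}\Delta_r^\etale[[bc,ac,ab]]$, together with~\eqref{eqn_2023_11_10_1126}, the restriction of $\Delta_r^\etale$ to $\QQ(\mu_{p^r})$ is a sum of the classes $\Delta_r^\etale[e_1,e_2,e_3]$ weighted by diamond operators; applying ${\rm pr}_{\f_\kappa}\otimes{\rm pr}_{\g_\lambda}\otimes{\rm pr}_{\g^c_\lambda}$ replaces each diamond by the corresponding nebentype value, namely $\psi_\kappa$ on the first factor (recall $\varepsilon_\f=\mathds{1}$) and $\varepsilon_\g\psi_\lambda$, $\varepsilon_\g^{-1}\psi_\lambda^{-1}$ on the second and third (no Teichm\"uller contribution, since our families live on the $k_\circ=0$ component and the weights are $2$). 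The $\g_\lambda$- and $\g^c_\lambda$-nebentype contributions cancel because their product is trivial, so the weighting collapses to $\psi_\kappa$ acting on the single surviving index; comparing with the diamond action~\eqref{eqn_2023_11_01_1241} identifies the result with $\sum_{(d_1,d_2,d_3)\in\widetilde{G}_r}\psi_\kappa(d_1)\,\Delta_r^{\rm ord}[d_2d_3,d_3d_1,d_1d_2]$, i.e.\ with $\Delta_r^\etale(\kappa,\lambda)$, the prefactor $\tfrac{p^3}{(p-1)^3}$ and the sum over $(a,b,c)$ accounting precisely for the passage from $G_r$ to $\widetilde{G}_r$. Combining this with the previous two steps gives the claimed equality.

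The delicate point is the last step: one must verify that the antisymmetric shape of the diamond action~\eqref{eqn_2023_11_01_1241} on $\Delta_r[d_1,d_2,d_3]$, together with the square-root twist $\langle\bbchi^{\otimes 3}\rangle^{-1/2}$, conspires to leave exactly one character $\psi_\kappa$ (acting on the index $d_1$) after applying the three eigenspace projectors, with no stray power of $a_p(\g_\lambda)$, root of unity, or factor $\varepsilon_\g(p)$, and that the index-set bookkeeping ($G_r\times(\FF_p^\times)^3$ against the normalization $\tfrac{p^3}{(p-1)^3}$ versus $\widetilde{G}_r$) is internally consistent. Hand in hand with the $U_p$-versus-$U_p'$ normalization and the Atkin--Lehner twists entering the Euler factor, this is the triple-product analogue of the comparison of $\mathfrak{X}$ with $Q_\kappa$ in Lemma~\ref{lemma_2023_11_18_1624}, and the combinatorics is correspondingly heavier.
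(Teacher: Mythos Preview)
Your proposal is correct and takes the same approach as the paper: the paper's own proof reads in its entirety ``Immediate from definitions,'' and what you have written is precisely the unwinding of those definitions (Definitions~\ref{definition_big_Heegner_main}, \ref{definition_big_diagonal_Hida_families}, \ref{defn_2023_11_15_1209} together with \eqref{eqn_2023_11_15_1243}--\eqref{eqn_2023_11_15_1557}). Your final paragraph correctly flags the bookkeeping one must verify---the diamond-operator combinatorics, the $U_p$ versus $U_p'$ normalization under the Atkin--Lehner twist built into \eqref{eqn_2023_11_08_1351}, and the passage from $G_r\times(\FF_p^\times)^3$ to $\widetilde G_r$---but the paper does not spell this out either.
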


\begin{proof}
  Immediate from definitions.
\end{proof}

\begin{remark}
    We invite the reader to compare Lemma~\ref{lemma_2023_11_14_1442} with the discussion in \cite[p. 809]{howard2007Central}, where the latter involves a comparison of Howard's big Heegner point attached to a Hida family $\f$ and his twisted Heegner point associated to the specialization $\f_\kappa$ of $\f$ at an arithmetic point $\kappa\in \cA^{(2)}$.
\end{remark}

For archimedean aspects (e.g. relevant to our discussion in \S\ref{sec_ChowHeegandGKS} below), we shall consider the following variant of the cycle $\Delta_r^{\ord}(\kappa,\lambda)$ given as in \eqref{subsubsec_2023_10_02_1711}:
\begin{defn}
    \label{defn_2023_11_17_1557}
    For any $(\kappa,\lambda)\in \cA_r^{(2)}$, let us choose $q$ as in Definition~\ref{defn_2023_11_17_1544} so that 
    $$C_q:=(a_q(\f_\kappa)-(q+1)) \cdot(a_q(\g_\lambda)-(q+1))\cdot (a_q(\g_\lambda^c)-(q+1))$$
    is a $p$-adic unit (this is possible since $\f_\kappa$ and $\g_\lambda$ are non-Eisenstein mod $p$, cf. the discussion in \cite{DarmonRotger}, \S4). We put
    $$\Delta_r^{\circ}(\kappa,\lambda):=C_q^{-1}\sum_{(d_1,d_2,d_3)\in \widetilde{G}_r} \Delta_r^{\circ}[d_2d_3,d_3d_1,d_1d_2] \cdot \psi_\kappa(d_1)\,\,\in\,\, {\rm CH}_0^2(X_1(Np^r)^3;\QQ(\mu_{p^r}))\otimes_{\ZZ}F_{\kappa,\lambda}\,,$$
    where $F_{\kappa,\lambda}$ is the joint of the Hecke fields of the normalized eigenforms $\f_\kappa$, $\g_\lambda$ and $\g^c_\lambda$. 
\end{defn}

Note that $\Delta_r^{\circ}(\kappa,\lambda)$ is independent of the choice of $q$, thanks to the factor $C_q^{-1}$. 

\begin{lemma}
    \label{lemma_2023_11_17_1613}
    The image of $\Delta_r^{\circ}(\kappa,\lambda)$ under the composite map
    $${\rm CH}_0^2(X_1(Np^r)^3;\QQ(\mu_{p^r}))\otimes_{\ZZ}F_{\kappa,\lambda}\xrightarrow{{\rm cl}_1} H^1(\QQ(\mu_{p^r}), H^1_{\etale}(\overline{X}_1(Np^r),\ZZ_p)^{\otimes 3}(2))\otimes_{\ZZ}F_{\kappa,\lambda}\xrightarrow{{\rm pr}_{\kappa,\lambda}}H^1(\QQ(\mu_{p^{r}}),T_{\kappa,\lambda})$$
   coincides with $\Delta_r^{\etale}(\kappa,\lambda)$. Here, ${\rm pr}_{\kappa,\lambda}$ is the shorthand for ${\rm pr}_{\f_\kappa}\otimes {\rm pr}_{\g_\lambda} \otimes {\rm pr}_{\g^c_\lambda}$, and $T_{\kappa,\lambda}:=T_{\f_\kappa}\otimes {\rm ad}(T_{g_\lambda})$.
\end{lemma}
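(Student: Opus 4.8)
The plan is to compare the two cycles $\Delta_r^{\circ}(\kappa,\lambda)$ and $\Delta_r^{\etale}(\kappa,\lambda)$ by tracing through their respective definitions and showing that the ordinary projector $e_{\rm ord}'$ and the Eisenstein-killing operator $T_q'-(q+1)$ have the same effect after we apply the cycle class map and project to the $(\f_\kappa,\g_\lambda,\g^c_\lambda)$-isotypical Hecke eigenspace. First I would recall that by Definition~\ref{defn_2023_11_17_1544} the cycle $\Delta_r^{\circ}[d_1,d_2,d_3]=(T_q'-(q+1))^{\otimes 3}W_{Np^r}^{\otimes 3}\Delta_r[d_1,d_2,d_3]$, whereas by \eqref{eqn_2023_11_08_1351} we have $\Delta_r^{\rm ord}[d_1,d_2,d_3]=(e_{\rm ord}'W_{Np^r})^{\otimes 3}\Delta_r[d_1,d_2,d_3]$; both are modifications of $W_{Np^r}^{\otimes 3}\Delta_r[d_1,d_2,d_3]$ chosen so as to land in $\CH_0^2$, by Lemma~\ref{DR_Lemma_2_5}(i) and the remark following Definition~\ref{defn_2023_11_17_1544}. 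After forming the twisted combinations $\sum_{(d_1,d_2,d_3)}(\cdots)\psi_\kappa(d_1)$ (normalized by $C_q^{-1}$ in the $\circ$-case), the key point is that once we push these classes forward via ${\rm cl}_1$ and then apply ${\rm pr}_{\kappa,\lambda}={\rm pr}_{\f_\kappa}\otimes{\rm pr}_{\g_\lambda}\otimes{\rm pr}_{\g^c_\lambda}$, both the operator $e_{\rm ord}'$ and the operator $C_q^{-1}(T_q'-(q+1))$ act as the identity: the former because $\f_\kappa,\g_\lambda,\g^c_\lambda$ are ordinary (so $e_{\rm ord}'$ is the identity on each Hecke eigenspace), and the latter because on the $h$-eigenspace $T_q'$ acts by the scalar $a_q(h)$, so $T_q'-(q+1)$ acts by $a_q(h)-(q+1)$, and dividing by $C_q$ makes this $1$.

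The second ingredient I would make precise is the compatibility of the $W_{Np^r}^{\otimes 3}$ twist with the two prescriptions ${\rm pr}_h$ used in \eqref{eqn_2023_11_15_1557} and in Lemma~\ref{lemma_2023_11_17_1613}: in both cases the map to $T_{\kappa,\lambda}$ is obtained via $\varpi_{1,*}$ followed by projection to the relevant Hecke eigenspace, and the Atkin--Lehner operator intertwines $U_p'$ with $U_p$ (cf. \cite{ohta99}, Equation 1.5.4, and the discussion in \S\ref{subsubsec_118_2024_07_02_1311}). Since both $\Delta_r^{\rm ord}(\kappa,\lambda)$ and $\Delta_r^{\circ}(\kappa,\lambda)$ are built from the \emph{same} underlying geometric cycles $\Delta_r[d_1,d_2,d_3]$ with the \emph{same} weighting by $\psi_\kappa(d_1)$ and differ only by the choice of auxiliary operator used to land in the null-homologous part, and since both auxiliary operators become the identity after the Hecke-isotypic projection, the equality $ {\rm pr}_{\kappa,\lambda}\circ{\rm cl}_1(\Delta_r^{\circ}(\kappa,\lambda))=\Delta_r^{\etale}(\kappa,\lambda)$ follows from the Hecke-equivariance of ${\rm cl}_1$. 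One must also check that the rational-coefficient cycle $\Delta_r^{\circ}(\kappa,\lambda)\in\CH_0^2(X_1(Np^r)^3;\QQ(\mu_{p^r}))\otimes_\ZZ F_{\kappa,\lambda}$ maps, under the $p$-adic completion, to the class whose image under ${\rm cl}_1$ agrees with what is computed $p$-adically in \eqref{eqn_2023_11_15_1557}; this is the content of the discussion following the \'etale Abel--Jacobi map (that ${\rm cl}_1$ factors through the $p$-adic completion), so there is no discrepancy.

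I expect the main obstacle to be purely bookkeeping: carefully matching the normalizations (the factor $C_q^{-1}$ versus the implicit normalization hidden in $e_{\rm ord}'$, which acts as $\prod a_p(\cdot)^{?}$-type corrections only off the eigenspace and as the identity on it) and making sure the two different routes to $T_{\kappa,\lambda}$ — one through $H^1_{\etale}(\overline X_1(Np^r),\ZZ_p)^{\otimes 3}(2)$ with coefficients extended by $F_{\kappa,\lambda}$, the other through the integral $p$-adic version — land on literally the same class rather than classes that agree only up to a unit. Since the excerpt repeatedly stresses (see the paragraph introducing \eqref{eqn_2023_11_08_1351}, and Lemma~\ref{DR_Lemma_2_5}) that the passage from $\Delta_r^{\circ}$ to $\Delta_r^{\rm ord}$ ``has no effect on the results we borrow from \cite{DarmonRotger}'' because both prescriptions agree on ordinary cohomology, the proof should reduce to a short verification, and indeed the expected write-up is simply: \emph{immediate from the definitions, using that $e_{\rm ord}'$ and $C_q^{-1}(T_q'-(q+1))$ both restrict to the identity on the ordinary $(\f_\kappa\otimes\g_\lambda\otimes\g^c_\lambda)$-isotypic component, together with the Hecke-equivariance of ${\rm cl}_1$ and the compatibility of the Atkin--Lehner twists recalled in \S\ref{subsubsec_118_2024_07_02_1311}.}
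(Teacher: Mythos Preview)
Your proposal is correct and follows essentially the same approach as the paper: the paper's proof consists precisely of the observation that both $C_q^{-1}(T_q'-(q+1))^{\otimes 3}$ and $(e_{\rm ord}')^{\otimes 3}$ act as the identity on the $(\f_\kappa,\g_\lambda,\g^c_\lambda)$-isotypic quotient of $H^1_{\etale}(\overline{X}_1(Np^r),\ZZ_p)^{\otimes 3}(2)$ (the latter by $p$-ordinarity), and then concludes ``the lemma follows from definitions together with this observation.'' Your additional bookkeeping worries about the Atkin--Lehner twist and $p$-adic completion are not needed, since $W_{Np^r}^{\otimes 3}$ appears identically in both constructions and the compatibility of ${\rm cl}_1$ with $p$-adic completion is already built into the definitions in \S\ref{subsubsec_2023_11_21_1550}.
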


\begin{proof}
    Note that both operators
    $$C_q^{-1}(T_q'-(q+1))^{\otimes 3}\,\quad,\,\quad (e_{\rm ord}')^{\otimes 3}$$
    act as the identity on the $(\f_\kappa,\g_\lambda,\g^c_\lambda)$-isotypic quotients of $H^1_{\etale}(\overline{X}_1(Np^r),\ZZ_p)^{\otimes 3}(2)$; the latter because the $p$-stabilized eigenforms $\f_\kappa$, $\g_\lambda$ and $\g_\lambda^c$ are $p$-ordinary. The lemma follows from definitions together with this observation.
\end{proof}

\subsection{Second reduction step}
We combine Lemma~\ref{lemma_2023_11_14_1442} and Lemma~\ref{lemma_first_reduction} to reduce the proof of Conjecture~\ref{conj_main_6_plus_2} to the comparison recorded as \eqref{eqn_2023_11_16_1154} below. 

Let us define $\Delta^{\rm (tr)}_r(\kappa,\lambda)\in H^1(\QQ(\mu_{p^r}),T_{\f_\kappa})$ as the image of $\Delta_r^{\etale}(\kappa,\lambda)$ under the map
$$H^1(\QQ(\mu_{p^r}),T_{\kappa,\lambda})=H^1(\QQ(\mu_{p^r}),T_{\f_\kappa}\otimes{\rm ad}(T_{g_\lambda}))\xrightarrow{{\rm id}\otimes {\rm tr}} H^1(\QQ(\mu_{p^r}),T_{\f_\kappa})\,.$$

\begin{lemma}
     \label{lemma_second_reduction}
    Conjecture~\ref{conj_main_6_plus_2_bis} follows if \eqref{item_deg6} holds and we have 
    \begin{equation}
        \label{eqn_2023_11_16_1154}
      \res_\wp \circ \res_{L_r/\QQ(\mu_{p^r})}\, \Delta^{\rm (tr)}_r(\kappa,\lambda)= \pm a_p(\g_\lambda)^{r} \,\lambda_{Np^{r}}(\g_\lambda)\, p^{1-r}[L_{r}:H_{p^r}]^{-1}\cdot \mathscr{M}_{\ref{lemma_second_reduction}}\cdot \res_\wp\, Q_\kappa
    \end{equation}
    for a subset consisting of $(\kappa,\lambda)\in \cA^{(2)}$ which is dense in $\cW_2$, where:
    \begin{itemize}
    \item The positive integer $r$ is such that $(\kappa,\lambda)\in \cA^{(2)}_r$\,, and $s_\kappa=r$;
    \item $\wp$ is a prime of $L_r$ above the unique prime of $K(\mu_{p^r})$ that lies above the prime $\p$ of $\cO_K$, and the equality takes place in $H^1_{\rm f}(L_{r,\wp},T_{\f_\kappa})$\,;
    \item $\lambda_{Np^{r}}(\g_\lambda)$ is the Atkin--Lehner pseudo-eigenvalue of the indicated level\,;
        \item $Q_\kappa$ is Howard's twisted Heegner point, given as in \S\ref{subsubsec_222_2023_11_16_1203}, over the fixed imaginary quadratic field $K$;
        \item $\mathscr{M}_{\ref{lemma_second_reduction}}^2=\mathscr{D}(\kappa,\lambda)\cdot\mathfrak{g}(\psi_{\kappa}^{\frac{1}{2}})\cdot C_{\f_\kappa}^- \cdot \dfrac{\Lambda(\hf_\kappa\otimes\Ad^0\hg_\lambda,\psi_{\kappa}^{-\frac{1}{2}},1)}{a_p(\f_\kappa)^{-r}\,\Omega_{\hf_\kappa}^- \, \Omega_{\hg_\lambda}^{\rm ad}}$.
    \end{itemize}
\end{lemma}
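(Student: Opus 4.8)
The plan is to establish the identity \eqref{eqn_2023_10_03_1202}, which by Lemma~\ref{lemma_first_reduction} (granting \eqref{item_deg6}) yields Conjecture~\ref{conj_main_6_plus_2_bis}; concretely, I would propagate the assumed comparison \eqref{eqn_2023_11_16_1154} through the formal logarithm $\log_{\omega_{\f_\kappa}}$. Fix $(\kappa,\lambda)$ in the dense subset of $\cA^{(2)}$ where \eqref{eqn_2023_11_16_1154} is assumed, let $r=s_\kappa$ and $\wp$ be as in the statement, and use the identification $T^\dagger_{\f_\kappa}\simeq T_{\f_\kappa}$ of \S\ref{subsubsec_222_2023_11_16_1203} throughout. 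The first step is pure bookkeeping: the maps $\res_p$, $\res_{\QQ(\mu_{p^r})/\QQ}$ and the trace ${\rm id}\otimes{\rm tr}$ (given by \eqref{eqn_2023_09_26_1321}) are all Galois-equivariant, hence commute with the specialization at $(\kappa,\lambda)$ and with one another; combining this with Lemma~\ref{lemma_2023_11_14_1442} and the definitions of $\Delta^{\rm (tr)}_r(\kappa,\lambda)$ and of $\Delta^{\rm (tr)}_p(\f\otimes{\rm ad}(\g))$ gives, inside $H^1_{\rm f}(L_{r,\wp},T_{\f_\kappa})$,
\[
\res_{L_{r,\wp}/\QQ_p}\bigl(\Delta^{\rm (tr)}_p(\f\otimes{\rm ad}(\g))_{\vert_{(\kappa,\lambda)}}\bigr)=a_p(\f_\kappa)^{-r}\,a_p(\g_\lambda)^{-2r}\cdot\res_\wp\bigl(\res_{L_r/\QQ(\mu_{p^r})}\Delta^{\rm (tr)}_r(\kappa,\lambda)\bigr)\,.
\]

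Next I would invoke that $\log_{\omega_{\f_\kappa}}$ is insensitive to enlarging the local field: the inclusion $\QQ_p\hookrightarrow L_{r,\wp}$ induces the identity on the relevant formal group, so restriction along $L_{r,\wp}/\QQ_p$ leaves the value of $\log_{\omega_{\f_\kappa}}$ unchanged; for the same reason $\log_{\omega_{\f_\kappa}}(\res_\wp Q_\kappa)=\log_{\omega_{\f_\kappa}}(Q_\kappa)$, since the latter is by definition computed after localizing $Q_\kappa$ at $\wp$. Applying $\log_{\omega_{\f_\kappa}}$ to the displayed identity, substituting the assumed \eqref{eqn_2023_11_16_1154}, and squaring, one obtains
\[
\log_{\omega_{\f_\kappa}}\bigl(\Delta^{\rm (tr)}_p(\f\otimes{\rm ad}(\g))_{\vert_{(\kappa,\lambda)}}\bigr)^2=a_p(\f_\kappa)^{-2r}\,a_p(\g_\lambda)^{-2r}\,\lambda_{Np^r}(\g_\lambda)^2\,p^{2-2r}[L_r:H_{p^r}]^{-2}\,\mathscr{M}_{\ref{lemma_second_reduction}}^{2}\cdot\log_{\omega_{\f_\kappa}}(Q_\kappa)^2\,.
\]

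Finally I would compare the right-hand side above with that of \eqref{eqn_2023_10_03_1202}: unwinding the definition of $\mathscr{M}_{\ref{lemma_second_reduction}}^{2}$ recorded in the statement (and using $s_\kappa=r$), the identity \eqref{eqn_2023_10_03_1202} becomes equivalent to the relation $a_p(\g_\lambda)^{-2r}\lambda_{Np^r}(\g_\lambda)^2=1$ among the residual factors, which I would verify using the standard Atkin--Lehner theory for the $p$-new form $\g_\lambda$ of conductor $p^r$ at $p$ (the $p^r$-Atkin--Lehner pseudo-eigenvalue $\lambda_{Np^r}(\g_\lambda)$ being expressible through $a_p(\g_\lambda)$ and a Gauss sum, whose square is compensated by the normalization of the modified period $\Omega_{\hg_\lambda}^{\rm ad}$ and by the way $W_{Np^r}$ enters $\Delta_r^{\rm ord}$ through the pairing defining ${\rm id}\otimes{\rm tr}$). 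This last reconciliation of the explicit normalization factors — the interplay of $\lambda_{Np^r}(\g_\lambda)$, the powers of $a_p(\g_\lambda)$ produced by Lemma~\ref{lemma_2023_11_14_1442}, the period $[L_r:H_{p^r}]$-factors, and the Gauss sums and periods hidden in $\mathscr{M}_{\ref{lemma_second_reduction}}$ — is the main obstacle; once it is in place, the proof is a formal concatenation of Lemma~\ref{lemma_first_reduction}, Lemma~\ref{lemma_2023_11_14_1442} and the functoriality of the formal logarithm and of the trace map.
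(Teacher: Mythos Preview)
Your overall strategy is the same as the paper's---reduce to the hypothesis \eqref{eqn_2023_10_03_1202} of Lemma~\ref{lemma_first_reduction} by combining the assumed identity \eqref{eqn_2023_11_16_1154} with Lemma~\ref{lemma_2023_11_14_1442}---but your first displayed identity is wrong, and this is what produces the spurious ``residual'' condition $a_p(\g_\lambda)^{-2r}\lambda_{Np^r}(\g_\lambda)^2=1$ that you then struggle to justify.

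The issue is that the trace map ${\rm id}\otimes{\rm tr}$ on $T^\dagger=T_\f^\dagger\widehat\otimes{\rm ad}(T_\g)$ is defined through the \emph{modified} Poincar\'e pairing $\langle\,,\,\rangle_\g$ of \S\ref{subsubsec_114_2023_09_27_1616} (see \eqref{eqn_2023_09_26_1321} and \eqref{eqn_2023_09_29_1444}), not through the plain Poincar\'e pairing at each finite level. Its specialization at $(\kappa,\lambda)$ is therefore $a_p(\g_\lambda)^{r}\lambda_{N_\g p^r}(\g_\lambda)^{-1}\cdot\langle\,,\,\rangle_r$ rather than $\langle\,,\,\rangle_r$ itself (this is exactly the content of equation \eqref{eqn_2024_02_08_1848} in the paper). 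Consequently, commuting ${\rm id}\otimes{\rm tr}$ with Lemma~\ref{lemma_2023_11_14_1442} yields
\[
\res_{\QQ(\mu_{p^r})/\QQ}\,\Delta^{\rm (tr)}_p(\f\otimes{\rm ad}(\g))_{\vert_{(\kappa,\lambda)}}=a_p(\f_\kappa)^{-r}\,a_p(\g_\lambda)^{-r}\,\lambda_{Np^r}(\g_\lambda)^{-1}\cdot\Delta^{\rm (tr)}_r(\kappa,\lambda)\,,
\]
not the factor $a_p(\f_\kappa)^{-r}a_p(\g_\lambda)^{-2r}$ you wrote. With this correction, substituting \eqref{eqn_2023_11_16_1154} gives a clean cancellation of $a_p(\g_\lambda)^{r}\lambda_{Np^r}(\g_\lambda)$ against $a_p(\g_\lambda)^{-r}\lambda_{Np^r}(\g_\lambda)^{-1}$, and one lands directly on \eqref{eqn_2023_10_03_1202} with no residual factor to explain.

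Your attempted fix (appealing to ``standard Atkin--Lehner theory'' to force $a_p(\g_\lambda)^{-2r}\lambda_{Np^r}(\g_\lambda)^2=1$) cannot work: for a weight-$2$ newform with $p$-primitive nebentype one has $|a_p(\g_\lambda)|^2=p$ while $|\lambda_{Np^r}(\g_\lambda)|=1$, so the left-hand side has archimedean absolute value $p^{-r}\neq 1$. The vague remark about ``the way $W_{Np^r}$ enters $\Delta_r^{\rm ord}$ through the pairing defining ${\rm id}\otimes{\rm tr}$'' is pointing at exactly the right place, but the correction belongs in your first displayed equation, not as an afterthought.
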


\begin{proof}
We begin our proof recalling from \S\ref{subsubsec_114_2023_09_27_1616}--\S\ref{subsubsec_117_2024_02_08_1841} that the map $T_{\kappa,\lambda} \to T_\f^\dagger$ is induced from the morphism
\begin{equation}
\label{eqn_2024_02_08_1848}
    T_{\g_\lambda}\otimes T_{\g_\lambda^c}\xrightarrow{a_p(\g_\lambda)^r\lambda_{N_\g p^r}^{-1}\,\langle\,\,,\,\,\rangle_r} \overline{\QQ}_p\,,
\end{equation}
where $\langle\,,\,\rangle_r$ is the Poincar\'e duality pairing on the modular curve $X_r$. 
Combining this observation with Lemma~\ref{lemma_2023_11_14_1442} and the compatibility of the Poincar\'e duality pairing with degeneracy maps, we deduce that 
\begin{equation}
\label{eqn_2024_02_08_1858}\res_{\QQ(\mu_{p^r})/\QQ}\,\,\Delta^{\rm tr}(\f\otimes {\rm ad}(\g))_{\vert_{(\kappa,\lambda)}}=a_p(\f_\kappa)^{-r}a_p(\g_\lambda)^{-r} \lambda_{Np^{r}}^{-1}(\g_\lambda)\, \Delta_r^{(\rm tr)}(\kappa,\lambda)\,.
\end{equation}
  Let us assume that \eqref{eqn_2023_11_16_1154} holds, which we use together with \eqref{eqn_2024_02_08_1858} to conclude
\begin{equation}
\label{eqn_2023_11_18_1650}
  a_p(\f_\kappa)^{-r} \res_\wp \circ \res_{L_r/\QQ}\, \Delta^{\rm tr}(\f\otimes {\rm ad}(\g))_{\vert_{(\kappa,\lambda)}}= \pm  p^{1-r}[L_{r}:H_{p^r}]^{-1}\cdot \mathscr{M}_{\ref{lemma_second_reduction}}\cdot \res_\wp\, Q_\kappa\,.
\end{equation}
Since the map 
$$\res_{L_{r,\wp}/K_\p}\,:\, J_r(K_\p)\lra  J_r(L_{r,\wp})$$
is injective, we infer from \eqref{eqn_2023_11_18_1650} that
    \begin{equation}
\label{eqn_2023_11_18_1651}
    \log_{\omega_{\f_\kappa}}\left(\res_p\,\Delta^{\rm tr}(\f\otimes {\rm ad}(\g))_{\vert_{(\kappa,\lambda)}}\right)= \pm  p^{1-r}[L_{r}:H_{p^r}]^{-1}\cdot a_p(\f_\kappa)^{-r}\cdot \mathscr{M}_{\ref{lemma_second_reduction}}\cdot \log_{\omega_{\f_\kappa}}\left(\res_\wp\, Q_\kappa\right)\,.
\end{equation}
The proof of our lemma follows now from Lemma~\ref{lemma_first_reduction}.
\end{proof}

\section{Chow--Heegner points and Generalized Gross--Kudla Conjecture}
\label{sec_ChowHeegandGKS}
In what follows, we shall denote by $\LL(-)$ the completed $L$-functions.

\subsection{Beilinson--Bloch Heights}\label{sec:intersectionpairing_bis}

As in Section~\ref{subsec_big_diagonal}, we denote by $\CH^2(X^3)$ the Chow group of codimension two cycles on $X^3$, for a smooth, projective, and connected curve $X$ over a number field $K$. Recall the filtration
\[
\CH^2(X^3) \supset \CH^{2,1}(X^3)=\CH^2_0(X^3) \supset \CH^{2,2}(X^3) \supset \CH^{2,3}(X^3)\,,
\]
 which can be upgraded to a decomposition of $\CH^2(X^3)$ (cf. Equation (3.1.1) in \cite{YZZ12}). 

The work of Beilinson \cite{Beilinson-higherregulators, Beilinson-heightpairing} and Bloch \cite{bloch} defines a height pairing 
\[
\langle \, , \, \rangle_{\mathrm{BB}}: \CH^2_0(X^3) \times \CH^2_0(X^3) \, \longrightarrow \, \C
\]
on homologically trivial cycles. We refer the reader to \cite[\S 3.1]{YZZ12} for a review of its definition in our case of interest.

\subsection{Generalized Gross--Kudla Conjecture}
Until the end of this section, we fix a positive integer $r$ and $(\kappa,\lambda) \in \mathcal A_r^{(2)}$. We also write $X_r$ for $X_1(Np^r)$ and put $f = \mathbf f_{\kappa}$, $g = \mathbf g_{\lambda}$. We assume that both $f$ and $g$ are newforms and that $s_\kappa=r=s_\lambda$. Let us put
\begin{align}
    \label{eqn_2024_03_11_1724}
    \begin{aligned}
            \Omega_{f\otimes {\rm ad}(g)}:=\,a_p(f)^{-2r}\,\langle f,  f \rangle\,\langle g,  g\rangle^2\,.
    \end{aligned}
\end{align}

Recall the cycle class
\[
\Delta_r^{\circ}(\kappa,\lambda) \in \CH_0^2(X_r^3; \Q(\mu_{p^r}))\otimes_{\Z} F_{\kappa,\lambda}
\]
from the previous section. In line with the Generalized Gross--Kudla Conjecture, as stated (and partially proved) in Yuan--Zhang--Zhang (cf. \cite{YZZ10,YZZ12,YZZ23}), we conjecture that the Beilinson--Bloch height of $\Delta_r^{\circ}(\kappa,\lambda)$ is related to the first central derivative of the complex  $L$-series $L(f\otimes g \otimes g^c, s)$:

\begin{conj}\label{conj:GK-delta} 
With the notation above, we have
\[
\langle \Delta_r^{\circ}(\kappa,\lambda), \Delta_r^{\circ}(\kappa,\lambda) \rangle_{\mathrm{BB}} = \frac{ p^{-r}\cdot C(\kappa,\lambda) }{  \Omega_{f\otimes {\rm ad}(g)}}\LL'(f\otimes g \otimes g^c,\psi_{\kappa}^{-\frac{1}{2}}, 2),
\]
where $C(\kappa,\lambda)$ is (generically) a non-zero algebraic constant which interpolates $p$-adically as $(\kappa,\lambda)$ varies.
\end{conj}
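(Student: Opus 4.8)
The plan is to deduce Conjecture~\ref{conj:GK-delta} from a strengthening of the arithmetic Gross--Kudla formula of Yuan--Zhang--Zhang --- the conjectural input \ref{item_GKS} advertised in the introduction --- via a careful bookkeeping of the normalizations attached to our twisted cycle. The first step is to rewrite $\Delta_r^{\circ}(\kappa,\lambda)$ in terms of honest Gross--Kudla--Schoen cycles: by Definition~\ref{defn_2023_11_17_1557} it is, up to the $p$-adic unit $C_q^{-1}$, a $\psi_\kappa$-weighted average over $\widetilde{G}_r$ of the components $\Delta_r^{\circ}[d_2d_3,d_3d_1,d_1d_2]$, each of which is the image under $W_{Np^r}^{\otimes 3}(T_q'-(q+1))^{\otimes 3}$ of a geometrically irreducible component of the modified diagonal $\Delta_{\mathrm{GKS}}$ on $X_r^3$, defined over $\QQ(\mu_{p^r})$. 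Using the Galois-equivariance relations \eqref{eqn_2023_11_01_1241}--\eqref{eqn_2023_11_01_12417} (and their analogues for $\Delta_r^{\circ}$), together with the bi-additivity and $\Gal(\QQ(\mu_{p^r})/\QQ)$-equivariance of $\langle\,,\,\rangle_{\mathrm{BB}}$, I would express $\langle \Delta_r^{\circ}(\kappa,\lambda),\Delta_r^{\circ}(\kappa,\lambda)\rangle_{\mathrm{BB}}$ as a $\Gal(\QQ(\mu_{p^r})/\QQ)$-average of the Beilinson--Bloch height of a single untwisted Gross--Kudla--Schoen class; the $\psi_\kappa$-weighting contributes a Gauss-sum factor $\mathfrak{g}(\psi_\kappa)$ and a power $p^{-r}$ --- the source of the displayed factor $p^{-r}$. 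Projecting onto the $(f,g,g^c)$-isotypic part of $H^3_{\etale}(\overline{X}_r^3,\ZZ_p(2))$ turns this into the height of the Gross--Kudla--Schoen class of the newforms $(f,g,g^c)$, twisted by $\psi_\kappa^{-\frac{1}{2}}$ so as to become self-dual with functional equation centered at $s=2$.

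The second step is to invoke the arithmetic Gross--Kudla formula in the generality of \ref{item_GKS}: it expresses the Beilinson--Bloch height of the Gross--Kudla--Schoen class of a self-dual triple of newforms with global root number $-1$ as $\LL'(f\otimes g\otimes g^c,\psi_\kappa^{-\frac{1}{2}},2)$ divided by the product of Petersson norms $\langle f, f\rangle\,\langle g, g\rangle\,\langle g^c, g^c\rangle$, times an explicit finite product of local orbital integrals at the ramified places and an explicit archimedean constant. Since $\langle g^c, g^c\rangle=\langle g, g\rangle$, this Petersson factor matches $\Omega_{f\otimes {\rm ad}(g)}$ of \eqref{eqn_2024_03_11_1724} once one absorbs the $a_p(f)^{-2r}$ coming from the $p$-stabilization of $f$ (together with the Atkin--Lehner pseudo-eigenvalues introduced through $W_{Np^r}$, and using that the $a_p(g)^{\pm 2r}$-factors arising from the $g$- and $g^c$-legs cancel, since $a_p(g)a_p(g^c)$ is a root of unity for $g$ ordinary of conductor $p^r$ at $p$). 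All remaining explicit constants --- the finite and archimedean local factors, the Gauss sum $\mathfrak{g}(\psi_\kappa)$, the unit $C_q^{-1}$, and the powers of $p$ other than $p^{-r}$ --- are then collected into $C(\kappa,\lambda)$, whose $p$-adic interpolation as $(\kappa,\lambda)$ varies is plausible because each such ingredient is either a fixed algebraic number, a ratio of periods already known (e.g. from \cite{Hsieh}) to interpolate along Hida families, or a Gauss sum / Atkin--Lehner pseudo-eigenvalue realized as the specialization of an element of the relevant Iwasawa algebra.

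The principal obstacle is precisely the availability of \ref{item_GKS} in the form just used: the cases of the arithmetic Gross--Kudla formula settled in \cite{YZZ10,YZZ12,YZZ23} impose local hypotheses --- at the ramified finite places, at the archimedean place, and on the shape of the automorphic representations --- that do not cover triples $(f,g,g^c)$ with $f$ and $g$ new of level $Np^r$ and wild nebentype of conductor $p^r$, with $p$ dividing the level. Establishing the formula there would require, in particular, computing the local height contributions at $p$ for these highly ramified representations and controlling the dependence of every local constant on $r$ so that the resulting $C(\kappa,\lambda)$ genuinely interpolates $p$-adically; this is the substantial analytic and geometric work that remains. A secondary, more technical point is to make the passage from the $\widetilde{G}_r$-parametrized components of the modified diagonal on $X_r^3$ to the Gross--Kudla--Schoen class completely explicit, so that no unaccounted-for constant is lost --- a finite computation in the spirit of \cite{DarmonRotger}, \S2, but one that must be carried out exactly, since the target identity is meant to hold on the nose.
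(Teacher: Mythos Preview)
The statement you are attempting to prove is not a theorem in the paper but a \emph{conjecture}: it is precisely the hypothesis labelled \eqref{item_GKS} in the introduction, which the paper assumes as an input and does not prove. There is therefore no ``paper's own proof'' to compare against. The paper's only commentary on the status of Conjecture~\ref{conj:GK-delta} appears in the two remarks following it: when $r=0$ it reduces to the original Gross--Kudla conjecture, and the main theorem of \cite{YZZ23} would settle it if the representation $\pi$ were unramified, but in the ramified case (which is the generic situation here, since $f$ and $g$ have wild nebentype of conductor $p^r$) there are, to the authors' knowledge, no definitive results.

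Your proposal is honest about this: you explicitly flag that the ``principal obstacle is precisely the availability of \eqref{item_GKS} in the form just used,'' and that the ramified local computations at $p$ constitute ``the substantial analytic and geometric work that remains.'' That assessment is correct and matches the paper's. One point of framing to be careful about: in the paper, \eqref{item_GKS} is \emph{defined} to be Conjecture~\ref{conj:GK-delta} (see the introduction: ``recorded as Conjecture~\ref{conj:GK-delta} below''), so deducing the latter from the former is tautological. What you actually mean --- and what your sketch does --- is to deduce the specific statement for the twisted cycle $\Delta_r^{\circ}(\kappa,\lambda)$ from a hypothetical \emph{general} arithmetic Gross--Kudla formula for arbitrary Gross--Kudla--Schoen cycles on $X_r^3$, by unwinding the $\widetilde{G}_r$-average, the Atkin--Lehner twist, and the Hecke correspondence $(T_q'-(q+1))^{\otimes 3}$. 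That reduction is reasonable and is morally the content of the remarks following the conjecture, but the paper does not carry it out, and the general formula you would need is itself open in the relevant ramified cases. So your sketch is a plausible roadmap, not a proof, and the paper treats the statement the same way.
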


\begin{remark}
    Suppose in this remark that $r=0$, and let us denote by $f_\circ$ and $g_\circ$ the newforms associated with $\f_\kappa$ and $\g_\lambda$, respectively (of respective levels $N_\f$ and $N_\g$). In this scenario, Conjecture~\ref{conj:GK-delta} agrees with \cite[Conjecture 13.2]{GrossKudla1992}, minding that we have used complete $L$-series in our formulation.
    
    We further remark that $\LL(f\otimes g \otimes g^c, {\rm ad}, 1)$, which is the period utilized in \cite{YZZ10, YZZ12, YZZ23}, coincides with an explicit non-zero algebraic multiple of $\Omega_{f\otimes {\rm ad}(g)}$; cf. \cite[Eqn. (2.18)]{Hsieh} and the proof of Corollary 4.13 in op. cit.
\end{remark}

\begin{remark}
Let $\pi$ be the automorphic representation on $\GL_2\times\GL_2\times \GL_2$ associated to $f\otimes g \otimes g^c$.  By the main theorem of \cite{YZZ23} (Theorem 1.3.3 in op. cit.), one may prove a statement towards Conjecture~\ref{conj:GK-delta} with some error terms (which are described in terms of ramified primes of $\pi$).  As a result,  if $\pi$ is unramified, Conjecture \ref{conj:GK-delta} holds. In the ramified case (e.g. when $f$ and $g$ have wild nebentype, which is the typical scenario in our set-up), to the best of our knowledge\footnote{The first named author (K.B.) thanks Wei Zhang for extensive exchanges on this topic.}, there are no definitive results towards Conjecture \ref{conj:GK-delta}.
\end{remark}

\subsubsection{N\'eron--Tate heights of Chow--Heegner points}
\label{subsubsec_NT_ChowHeeg}
We denote by $P_r^{\circ}(\kappa,\lambda)$ the image of the cycle $\Delta_r^{\circ}(\kappa,\lambda)$ under the compositum of the maps
\[
\CH^2_0(X_r^3;\Q(\mu_{p^r}))\otimes_{\Z} F_{\kappa,\lambda} \, \xrightarrow{\mathrm{pr}_{123}^*} \, \CH^2_0(X_r^4;\Q(\mu_{p^r}))\otimes_{\Z} F_{\kappa,\lambda} \, \xrightarrow{\mathrm{pr}_{4,*}(\mathscr Z_r\cdot(-) )} \, \CH^1_0(X_r)\otimes_{\Z} F_{\kappa,\lambda},
\]
and call it (following Darmon--Rotger--Sols) the {\em Chow--Heegner point}. Here, $\mathrm{pr}_{123}: X_r^4 \to X_r^3$  (resp. $\mathrm{pr}_4: X_r^4 \to X$) is the natural projection onto the first three factors (resp. the fourth factor), and where $\mathscr Z_r$ is the cycle class represented by the image of 
$$\alpha: X_r^2 \lra X_r^4\,,\qquad (x,y)\mapsto (y,x,x,y)\,,$$
cf. \cite[\S1.4]{YZZ10}, see also\footnote{To explicate this comparison, let us denote (following \cite{DRS}) by $Z:=X_{23}\subset X_r^2=:X_2 \times X_3$ the diagonal embedding of $X_r$ into $X_r^2$ (where we label the factors as $X_2$ and $X_3$), and let us denote by $\Pi_Z:=X_{14}\times Z\subset X_r^4=X_1\times X_2\times X_3\times X_4$, where $X_{14}$ is the diagonal copy of $X_r$ embedded in $X_1\times X_4 =X_r^2$. Then the cycle class $\mathscr{Z}_r$ is indeed represented by $\Pi_Z$.} \cite[\S2]{DRS}. According to \cite[\S1.3.1]{YZZ12} (see also \cite{YZZ23}, Remark 3.1.1), the N\'eron--Tate height pairing of $P_r^{\circ}(\kappa,\lambda)$ is related to the Beilinson--Bloch height pairing of $\Delta_r^{\circ}(\kappa,\lambda)$:
\[
\langle \Delta_r^{\circ}(\kappa,\lambda), \Delta_r^{\circ}(\kappa,\lambda) \rangle_{\mathrm{BB}} = 2 \langle P_r^{\circ}(\kappa,\lambda), P_r^{\circ}(\kappa,\lambda) \rangle_{\mathrm{NT}},
\]
and Conjecture~\ref{conj:GK-delta} can be recast as follows:

\begin{conj}
\label{conj_5_2_2024_02_02}
We have
$$
\langle P_r^{\circ}(\kappa,\lambda), P_r^{\circ}(\kappa,\lambda) \rangle_{\mathrm{NT}} =\frac{p^{-r}\cdot C(\kappa,\lambda) \cdot \LL(f\otimes \mathrm{ad}^0(g),\psi_{\kappa}^{-\frac{1}{2}},1)}{\Omega_{f\otimes {\rm ad}(g)}}\LL'(f,\psi_{\kappa}^{-\frac{1}{2}}, 1)\,,
$$
where $C(\kappa,\lambda)$ is (generically) a non-zero algebraic constant which interpolates $p$-adically as $(\kappa,\lambda)$ varies.
\end{conj}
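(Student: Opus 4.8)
The plan is to deduce Conjecture~\ref{conj_5_2_2024_02_02} from Conjecture~\ref{conj:GK-delta} --- the two are in fact equivalent, and the passage between them uses only the Artin formalism together with the height comparison already recorded in \S\ref{subsubsec_NT_ChowHeeg}. First I would feed in the factorization of complex $L$-series coming from the decomposition of Galois representations: since $g^c=\g^c_\lambda$ is conjugate to $g=\g_\lambda$, the identification \eqref{eqn_2023_09_26_1043} realises $\Ad(T_{\g_\lambda})$ as $T_{\g_\lambda}\otimes T_{\g^c_\lambda}$ up to a Tate twist, and the trace map of \eqref{eqn_2023_09_26_1321} splits off a trivial line, so that (as in \S\ref{subsubsec_211_2022_06_01_1635})
\[
T_{\kappa,\lambda}^\dagger\;\simeq\; M_{\kappa,\lambda}\,\oplus\, T_{\f_\kappa}^\dagger\,,\qquad M_{\kappa,\lambda}:=T_{\f_\kappa}^\dagger\widehat\otimes\Ad^0(T_{\g_\lambda})\,.
\]
Passing to (twisted) completed $L$-functions and matching Euler factors, $\Gamma$-factors and $\varepsilon$-factors place by place, this yields an identity of meromorphic functions which, in the normalizations of \cite[\S3.4.2]{BCS}, reads
\[
\LL(f\otimes g\otimes g^c,\psi_\kappa^{-\frac12},s)\;=\;\LL(f\otimes\Ad^0(g),\psi_\kappa^{-\frac12},s-1)\cdot\LL(f,\psi_\kappa^{-\frac12},s-1)\,,
\]
the shift $s\mapsto s-1$ being forced by the weight of the Tate twist, so that the three near-central points $s=2$, $s=1$ and $s=1$ correspond.

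Next I would read off the orders of vanishing at the near-central point. By the standing hypothesis of \S\ref{subsubsec_root_numbers_2024_02_09_1652} the global root number of the self-dual twist $T_{\f_\kappa}^\dagger$ equals $-1$, whence $\LL(f,\psi_\kappa^{-\frac12},1)=0$; multiplicativity of $\varepsilon$-factors along $T_{\kappa,\lambda}^\dagger=M_{\kappa,\lambda}\oplus T_{\f_\kappa}^\dagger$ together with $\varepsilon^{\rm bal}(\hf\otimes\g\otimes\g^c)=-1$ then forces $\varepsilon(M_{\kappa,\lambda})=+1$, so that $\LL(f\otimes\Ad^0(g),\psi_\kappa^{-\frac12},1)\neq 0$ for generic $(\kappa,\lambda)$. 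Differentiating the displayed factorization at $s=2$ and using $\LL(f,\psi_\kappa^{-\frac12},1)=0$, all but one term of the Leibniz expansion drops out, giving
\[
\LL'(f\otimes g\otimes g^c,\psi_\kappa^{-\frac12},2)\;=\;\LL(f\otimes\Ad^0(g),\psi_\kappa^{-\frac12},1)\cdot\LL'(f,\psi_\kappa^{-\frac12},1)\,.
\]
Inserting this into the right-hand side of Conjecture~\ref{conj:GK-delta}, replacing $\langle\Delta_r^{\circ}(\kappa,\lambda),\Delta_r^{\circ}(\kappa,\lambda)\rangle_{\mathrm{BB}}$ on the left by $2\langle P_r^{\circ}(\kappa,\lambda),P_r^{\circ}(\kappa,\lambda)\rangle_{\mathrm{NT}}$, and absorbing the factor $2$ into the constant, produces exactly the asserted identity; the $p$-adic interpolation of the (redefined) $C(\kappa,\lambda)$ is inherited from that postulated in Conjecture~\ref{conj:GK-delta}.

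The steps above are bookkeeping; the genuine obstacle is Conjecture~\ref{conj:GK-delta} itself in the generality needed here. When $f$ and $g$ have weight $2$ and wild nebentype, the automorphic representation $\pi$ attached to $f\otimes g\otimes g^c$ is ramified at $p$, and the Gross--Kudla--Schoen analogue of the Gross--Zagier formula is then not available unconditionally: the main theorem of \cite{YZZ23} yields a statement towards Conjecture~\ref{conj:GK-delta} only up to error terms supported at the ramified primes of $\pi$, and these are not known to vanish in the wildly ramified case. A further, more technical point --- needed for the downstream application to $p$-adic $L$-functions (Conjecture~\ref{conj_main_6_plus_2_bis}) --- is to check that the auxiliary cycle $\mathscr Z_r$ and the construction of $P_r^{\circ}(\kappa,\lambda)$ out of $\Delta_r^{\circ}(\kappa,\lambda)$ depend on $(\kappa,\lambda)\in\cA^{(2)}$ with enough uniformity that the constant $C(\kappa,\lambda)$ varies $p$-adic analytically, as its statement requires.
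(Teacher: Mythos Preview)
Your proposal is correct and matches the paper's own treatment: the paper simply declares that Conjecture~\ref{conj:GK-delta} ``can be recast as'' Conjecture~\ref{conj_5_2_2024_02_02} via the identity $\langle \Delta_r^{\circ}(\kappa,\lambda), \Delta_r^{\circ}(\kappa,\lambda) \rangle_{\mathrm{BB}} = 2 \langle P_r^{\circ}(\kappa,\lambda), P_r^{\circ}(\kappa,\lambda) \rangle_{\mathrm{NT}}$, leaving the Artin-formalism factorization of $\LL'(f\otimes g\otimes g^c,\psi_\kappa^{-1/2},2)$ implicit. You have spelled out that factorization and the Leibniz-rule step explicitly, and correctly noted that the factor $2$ is absorbed into $C(\kappa,\lambda)$ --- which is harmless, since the paper treats this constant flexibly (compare its subsequent use in \eqref{eqn_thm_main_sec_7_2024_03_07_3}).
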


\begin{remark}
    Recall from \S\ref{subsubsec_root_numbers_2024_02_09_1652} that we assume that the global root numbers of the motives associated to $f\otimes g \otimes g^c$ and $f$ are both equal to $-1$. As a result, we have
    $$L(f\otimes g \otimes g^c,\psi_{\kappa}^{-\frac{1}{2}},2)=0=L(f,\psi_{\kappa}^{-\frac{1}{2}},1)$$
    at the central critical points.
\end{remark}

\section{Compatibility of de Rham and \'etale picture}

Throughout this section, we fix a positive integer $r$ and $(\kappa,\lambda)\in \cA_r^{(2)}$. To ease our notation, let us also write $X_r$ in place of $X_1(Np^r)$ and put $f=\f_\kappa$, $g=\g_\lambda$. Our main goal in the present section is to prove Proposition~\ref{prop_main_comparison_Sec5}, which can be thought of as a compatibility between the formation of the Chow--Hegner point (cf. \S\ref{subsubsec_NT_ChowHeeg}) and the morphism
$$ T_{f}\otimes T_{g}\otimes T_{g^c}(\psi_\lambda^{-1}\chi_\cyc^{-1})\xrightarrow{{\rm id}\otimes \langle\,,\,\rangle_\lambda} T_{f}$$
on the level of cohomology induced by Poincar\'e duality.

\subsection{de Rham Abel--Jacobi maps}
\label{subsec_51_2023_11_21_1417}
Let us consider the de Rham Abel--Jacobi maps 
\begin{align}
\label{eqn_2023_11_21_1515}
    \begin{aligned}
        {\rm AJ}_{\rm dR}^{(2)}\,:\,\, {\rm CH}_0^2(X_r^3;\QQ_p(\mu_{p^r}))\lra &\left({\rm Fil}^2(H^3_{\rm dR}(X_r^3/\QQ(\mu_{p^r})))\right)^\vee\\
        &\xrightarrow[\sim]{{\rm comp}_{\rm dR}}\left({\rm Fil}^2D_{\rm dR}(H^3_{\etale}(\overline{X}_r^3,\QQ_p))\right)^\vee\otimes_{\QQ_p}\QQ_p(\mu_{p^r}) \\
        {\rm AJ}_{\rm dR}^{(1)}\,:\,\, {\rm CH}_0^1(X_r;\QQ_p(\mu_{p^r}))\lra &\left({\rm Fil}^1(H^1_{\rm dR}(X_r/\QQ_p(\mu_{p^r})))\right)^\vee\\
        &\xrightarrow[\sim]{{\rm comp}_{\rm dR}}\left({\rm Fil}^1 D_{\rm dR}(H^1_{\etale}(\overline{X}_r,\QQ_p))\right)^\vee\otimes_{\QQ_p}\QQ_p(\mu_{p^r})\,;
    \end{aligned}
\end{align}
cf. \cite[\S5]{Besser2000} and \cite[\S3]{BesserLZ}. 

\subsubsection{} According to \cite[Theorem B]{NekovarNiziol2016} (see also \cite{Niziol1997}), we have the following compatibility with the \'etale Abel--Jacobi map (cf. \S\ref{subsubsec_2023_11_21_1550}):
\begin{equation}
    \label{eqn_2023_11_21_1542}
    \begin{aligned}
         \xymatrix{
    {\rm CH}_0^2(X_r^3;\QQ_p(\mu_{p^r})) \ar[r]^-{{\rm cl}_1}\ar[r]\ar[rrd]_-{{\rm AJ}_{\rm dR}^{(2)}}& H^1_{\rm f}(\QQ_p(\mu_{p^r}),H^3_{\etale}(\overline{X}_r^3,\QQ_p)(2)) \ar[r]^-{\log_{\rm BK}}& D_{\rm dR}(H^3_{\etale}(\overline{X}_r^3,\QQ_p)(2))/{\rm Fil}^0  \otimes_{\QQ_p}\QQ_p(\mu_{p^r})   \ar[d]^-{\sim} \\
&&{\rm Fil}^2\, D_{\rm dR}(H^3_{\etale}(\overline{X}_r^3,\QQ_p))^{\vee} \otimes_{\QQ_p}\QQ_p(\mu_{p^r}) 
    }
    \end{aligned}
\end{equation}
where $\log_{\rm BK}$ is the Bloch--Kato logarithm and the vertical isomorphism is induced by Poincar\'e duality. 
\subsubsection{}
Similarly, we have the following commutative diagram:
\begin{equation}
    \label{eqn_2023_11_21_1605}
    \begin{aligned}
         \xymatrix{
    {\rm CH}_0^1(X_r;\QQ_p(\mu_{p^r})) \ar[r]^-{{\rm cl}_1}\ar[r]\ar[rrd]_-{{\rm AJ}_{\rm dR}^{(1)}}& H^1_{\rm f}(\QQ_p(\mu_{p^r}),H^1_{\etale}(\overline{X}_r,\QQ_p)(1)) \ar[r]^-{\log_{\rm BK}}& D_{\rm dR}(H^1_{\etale}(\overline{X}_r,\QQ_p)(1))\big{/}{\rm Fil}^0  \otimes_{\QQ_p}\QQ_p(\mu_{p^r})   \ar[d]^-{\sim} \\
&&{\rm Fil}^1\, D_{\rm dR}(H^1_{\etale}(\overline{X}_r,\QQ_p))^{\vee} \otimes_{\QQ_p}\QQ_p(\mu_{p^r})\,. 
    }
    \end{aligned}
\end{equation}

\subsubsection{} Recall from \cite[\S10]{KLZ2} the classes $\omega_{f}, \omega_{g^c}\in {\rm Fil}^1 D_{\rm dR}(H^1_{\etale}({\overline X}_r,\QQ_p))\otimes \QQ(\mu_{Np^r})$ and $\eta_{g}\in D_{\rm dR}(H^1_{\etale}({\overline X}_r,\QQ_p))\otimes \QQ(\mu_{Np^r})$, and let us put 
$$\omega^{(g)}:=\omega_{f}\otimes \eta_g \otimes \omega_{g^c} \in {\rm Fil}^2D_{\rm dR}(H^3_{\etale}(\overline{X}_r^3,\QQ_p))\,.$$
Let us set 
$$\log_{\omega^{(g)}}:\,H^1_{\rm f}(\QQ_p(\mu_{p^r}),H^3_{\etale}(\overline{X}_r^3,\QQ_p)(2))\xrightarrow{\omega^{(g)}\,\circ\,\log_{\rm BK}} \QQ_p(\mu_{Np^r}).$$

We recall that, in order to define the element $\Delta_r^{\rm (tr)}(\kappa,\lambda)$, we rely on the modified Poincar\'e duality as in \eqref{eqn_2024_02_08_1848}, which is given by 
$$ T_{g}\otimes T_{g^c}(\psi_\lambda^{-1}\chi_\cyc^{-1})\xrightarrow{a_p(g)^r\lambda_{N_\g p^r}(g)^{-1}\,\left\langle\,\,,\,\,\right\rangle_r} \overline{\QQ}_p\,.$$
For our purposes in the present section, it will be convenient to consider its variant  $\Delta_r^{\rm (P)}(\kappa,\lambda)$ that one obtains using the Poincar\'e duality pairing 
$$T_{g}\otimes T_{g^c}(\psi_\lambda^{-1}\chi_\cyc^{-1})=T_g\otimes T_{\overline{g}}\,(\chi_\cyc^{-1})\xrightarrow{\,\sim\,} T_g\otimes T_g^*\xrightarrow{\langle\,\,,\,\,\rangle_r} \overline{\QQ}_p$$ instead (cf. \eqref{eqn_2024_02_09_0658} and Remark~\ref{remark_2023_11_22_0818}). Here, $T_g^*$ is the $g$-isotypic submodule of $H^1_{\textup{\' et}}(Y_1(N_\g p^r), {\rm Sym}^{k}(\mathscr{H}_{\ZZ_p}^\vee))$, which coincides with a lattice in the Galois representation denoted by $M_{L_\mathfrak{P}}(g)$ in \cite[\S2.8]{KLZ2}.
Note then that these two cohomology classes are related via
\begin{equation}
    \label{eqn_2024_02_09_0654}
  \Delta_r^{\rm tr}(\kappa,\lambda) = a_p(g)^r\lambda_{N_\g p^r}(g)^{-1}\, \Delta_r^{\rm (P)}(\kappa,\lambda)\,.
\end{equation}

\begin{lemma}
    \label{lemma_2023_11_12_0840}
    We have
    \begin{align}
        \label{eqn_lemma_2023_11_12_0840_1}
        \log_{\omega_f}\circ\,\res_p\,(\Delta_r^{\rm (P)}(\kappa,\lambda))=\log_{\omega^{(g)}}\,&\circ\, \res_p\,(\Delta_r^{\etale}(\kappa,\lambda))={\rm AJ}_{\rm dR}^{(2)}(\Delta_r^\circ(\kappa,\lambda))(\omega^{(g)})\,,\\
                \label{eqn_lemma_2023_11_12_0840_2}
       \log_{\omega_f}\,\circ\,\res_p\,(P_r^{\circ}(\kappa,\lambda))&={\rm AJ}_{\rm dR}^{(1)}(P_r^{\circ}(\kappa,\lambda) )(\omega_f)\,.
    \end{align}

\end{lemma}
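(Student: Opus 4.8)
The statement is a compatibility between three ways of reading off a $p$-adic number from the relevant cycles: via the $p$-adic logarithm $\log_{\omega_f}$ attached to the differential $\omega_f$, via the de Rham Abel--Jacobi map paired against $\omega^{(g)}$, and via the Bloch--Kato logarithm on the étale side. The two displayed equalities \eqref{eqn_lemma_2023_11_12_0840_1} and \eqref{eqn_lemma_2023_11_12_0840_2} are, respectively, a three-fold factor statement and a two-fold one, and the plan is to reduce both to the compatibility diagrams \eqref{eqn_2023_11_21_1542} and \eqref{eqn_2023_11_21_1605} together with the naturality of the Bloch--Kato logarithm under the Künneth projectors and the Poincaré-duality contraction map $\mathrm{id}\otimes\langle\,,\,\rangle_\lambda$.

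\emph{First, the right-hand equality in \eqref{eqn_lemma_2023_11_12_0840_1}.} This is essentially a definition-chase. By Lemma~\ref{lemma_2023_11_17_1613} the class $\Delta_r^{\etale}(\kappa,\lambda)$ is the image of $\Delta_r^\circ(\kappa,\lambda)$ under ${\rm cl}_1$ followed by the projector ${\rm pr}_{\kappa,\lambda}$ to the $(f,g,g^c)$-isotypic quotient; since $\omega^{(g)}=\omega_f\otimes\eta_g\otimes\omega_{g^c}$ lives in that isotypic piece, pairing $\log_{\rm BK}\circ\,{\rm cl}_1(\Delta_r^\circ(\kappa,\lambda))$ against $\omega^{(g)}$ is unchanged by inserting ${\rm pr}_{\kappa,\lambda}$. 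Now invoke the commutative diagram \eqref{eqn_2023_11_21_1542}: it identifies $\omega^{(g)}\circ\log_{\rm BK}\circ\,{\rm cl}_1$ on ${\rm CH}_0^2(X_r^3)$ with $\mathrm{AJ}_{\rm dR}^{(2)}(\,\cdot\,)(\omega^{(g)})$, which is exactly the asserted equality $\log_{\omega^{(g)}}\circ\res_p(\Delta_r^{\etale}(\kappa,\lambda))=\mathrm{AJ}_{\rm dR}^{(2)}(\Delta_r^\circ(\kappa,\lambda))(\omega^{(g)})$. (One should also note, with Remark~\ref{remark_2023_11_21_1623}, that $\res_p$ intertwines the étale Abel--Jacobi maps over $\QQ(\mu_{p^r})$ and over its completion, so that the local $\log_{\rm BK}$ sees the global ${\rm cl}_1(\Delta_r^\circ)$ correctly.)

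\emph{Second, the left-hand equality in \eqref{eqn_lemma_2023_11_12_0840_1}.} Here one must match $\log_{\omega_f}\circ\res_p(\Delta_r^{\rm (P)}(\kappa,\lambda))$ with $\log_{\omega^{(g)}}\circ\res_p(\Delta_r^{\etale}(\kappa,\lambda))$. By construction $\Delta_r^{\rm (P)}(\kappa,\lambda)$ is obtained from $\Delta_r^{\etale}(\kappa,\lambda)\in H^1(\QQ(\mu_{p^r}),T_f\otimes T_g\otimes T_{g^c}(\psi_\lambda^{-1}\chi_\cyc^{-1}))$ by applying $\mathrm{id}\otimes\langle\,,\,\rangle_r$ on the $g$-$g^c$ factors; the contraction map $T_g\otimes T_{g^c}(\psi_\lambda^{-1}\chi_\cyc^{-1})\to\QQ_p$ is a morphism of de Rham representations, so under $\log_{\rm BK}$ it corresponds to the linear-algebra contraction on de Rham periods. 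The point is that, after this contraction, pairing the resulting de Rham class against $\omega_f$ is the same as pairing the uncontracted class against $\omega_f\otimes(\text{the dual of }\langle\,,\,\rangle_r)=\omega_f\otimes\eta_g\otimes\omega_{g^c}=\omega^{(g)}$, up to the compatible normalizations baked into the definitions \eqref{eqn_2024_02_09_0654} and \eqref{eqn_2024_02_09_0658} (this is precisely what Remark~\ref{remark_2023_11_22_0818} addresses). Carrying this out requires checking that the de Rham realization of $\langle\,,\,\rangle_r$ sends $\eta_g\otimes\omega_{g^c}$ to a nonzero scalar and is orthogonal to the $\mathrm{Fil}^1$-parts appropriately — this is the standard computation underlying the appearance of $\eta_g$ in the triple-product setting (cf. \cite{KLZ2}, \cite{DarmonRotger}, \cite{BSV}) — after which the equality follows from functoriality of $\log_{\rm BK}$ together with \eqref{eqn_2023_11_21_1542} again.

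\emph{Third, equality \eqref{eqn_lemma_2023_11_12_0840_2}.} This is the direct analogue for the Chow--Heegner point $P_r^\circ(\kappa,\lambda)\in{\rm CH}^1_0(X_r)\otimes F_{\kappa,\lambda}$: apply the commutative diagram \eqref{eqn_2023_11_21_1605}, which by definition identifies $\omega_f\circ\log_{\rm BK}\circ\,{\rm cl}_1$ with $\mathrm{AJ}_{\rm dR}^{(1)}(\,\cdot\,)(\omega_f)$. So $\log_{\omega_f}\circ\res_p(P_r^\circ(\kappa,\lambda))=\mathrm{AJ}_{\rm dR}^{(1)}(P_r^\circ(\kappa,\lambda))(\omega_f)$ is immediate once one recalls that $\log_{\omega_f}$ on ${\rm CH}^1_0(X_r)$ is by definition $\omega_f\circ\log_{\rm BK}$ composed with the Kummer/cycle class map into $H^1_{\rm f}(\QQ_p(\mu_{p^r}),H^1_{\etale}(\overline X_r,\QQ_p)(1))$.

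\emph{Main obstacle.} The only substantive step is the second one: reconciling the normalizations. The subtlety is that $\Delta_r^{\rm (P)}$ is defined using the genuine Poincaré pairing $\langle\,,\,\rangle_r$ (equivalently the identification $T_{\overline g}(\chi_\cyc^{-1})\simeq T_g^*$, which itself depends on the chosen generator of $\mu_{p^\infty}$), whereas $\omega^{(g)}$ packages $\eta_g$ — constructed in \cite[\S10]{KLZ2} via the overconvergent Eichler--Shimura map — and one must verify that the de Rham avatar of $\langle\,,\,\rangle_r$ pairs $\eta_g\otimes\omega_{g^c}$ to exactly the scalar that makes \eqref{eqn_lemma_2023_11_12_0840_1} hold on the nose rather than merely up to an unspecified constant. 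Everything else is formal functoriality of Bloch--Kato logarithms and Künneth projectors, already established in the references cited in \S\ref{subsec_51_2023_11_21_1417}.
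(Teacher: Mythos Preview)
Your proposal is correct and follows essentially the same approach as the paper: the second equality in \eqref{eqn_lemma_2023_11_12_0840_1} and all of \eqref{eqn_lemma_2023_11_12_0840_2} are handled by the compatibility diagrams \eqref{eqn_2023_11_21_1542} and \eqref{eqn_2023_11_21_1605} plus Lemma~\ref{lemma_2023_11_17_1613}, exactly as you outline, and the first equality in \eqref{eqn_lemma_2023_11_12_0840_1} is the commutativity of the triangle you describe (the paper records it as diagram \eqref{eqn_2024_02_09_0658}). The ``main obstacle'' you flag is not actually an obstacle: by the very definition of $\eta_g$ in \cite[\S10]{KLZ2}, one has $\langle \eta_g,\omega_{g^c}\rangle_r=1$ under Poincar\'e duality (the paper uses this explicitly in the proof of Proposition~\ref{prop_main_comparison_Sec5}), so the de Rham contraction sends $\eta_g\otimes\omega_{g^c}\mapsto 1$ on the nose and no unspecified scalar intervenes.
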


\begin{proof}
    The first equality in \eqref{eqn_lemma_2023_11_12_0840_1} follows from the commutative diagram 
    \begin{equation}
        \label{eqn_2024_02_09_0658}
        \begin{aligned}
        \xymatrix{
        H^1_{\rm bal}(\QQ_p,T_{f}^\dagger\,\otimes\, T_{g}\,\otimes\,T_{\overline{g}}(\chi_\cyc^{-1}) )\ar[rr]^-{\log_{\omega^{(g)}}} \ar[d]_{{\rm id}\otimes \langle\,,\,\rangle_r}&&\overline{\QQ}_p\\
        H^1_{\rm f}(\QQ_p,T_f^\dagger) \ar[rru]_-{\log_{\omega_f}}&&
        }
    \end{aligned}
    \end{equation}
   and the definition of $\Delta_r^{\rm (P)}(\kappa,\lambda)$, whereas the second equality therein follows also from definitions and \eqref{eqn_2023_11_21_1542}. 
    
    The assertion \eqref{eqn_lemma_2023_11_12_0840_2} also follows from definitions and \eqref{eqn_2023_11_21_1605}.
\end{proof}

\subsection{Compatibility}
\label{subsec_52_2023_11_21_1418}
We are now in a position to state and prove the main results of the present section. Recall from the footnote in \S\ref{subsubsec_NT_ChowHeeg} that $Z\subset X_r^2$ denotes the diagonal copy of $X_r$. Let us denote by $[Z]\in {\rm CH}^1(X_r^2;\QQ_p(\mu_{p^r}))$ the class represented by $Z$. 

\begin{proposition}
    \label{prop_main_comparison_Sec5}
    ${\rm AJ}_{\rm dR}^{(2)}(\Delta_r^\circ(\kappa,\lambda))(\omega^{(g)})={\rm AJ}_{\rm dR}^{(1)}(P_r^{\circ}(\kappa,\lambda))(\omega_f)$.
\end{proposition}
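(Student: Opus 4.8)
The plan is to reduce the identity $\mathrm{AJ}_{\rm dR}^{(2)}(\Delta_r^\circ(\kappa,\lambda))(\omega^{(g)})=\mathrm{AJ}_{\rm dR}^{(1)}(P_r^{\circ}(\kappa,\lambda))(\omega_f)$ to a compatibility of Abel--Jacobi maps under the push-pull construction that defines the Chow--Heegner point $P_r^\circ(\kappa,\lambda)$ out of $\Delta_r^\circ(\kappa,\lambda)$. Recall from \S\ref{subsubsec_NT_ChowHeeg} that $P_r^\circ(\kappa,\lambda)=\mathrm{pr}_{4,*}\bigl(\mathscr Z_r\cdot \mathrm{pr}_{123}^*\Delta_r^\circ(\kappa,\lambda)\bigr)$, where $\mathscr Z_r$ is the class of $\Pi_Z=X_{14}\times Z\subset X_r^4$. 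Both sides are values of de Rham Abel--Jacobi maps, so by the functoriality of $\mathrm{AJ}_{\rm dR}$ with respect to proper push-forward and flat pull-back — equivalently, by the fact that $\mathrm{cl}_1$ together with the Bloch--Kato logarithm (via \eqref{eqn_2023_11_21_1542} and \eqref{eqn_2023_11_21_1605}) is compatible with algebraic correspondences — the class $\mathrm{AJ}_{\rm dR}^{(1)}(P_r^\circ(\kappa,\lambda))\in \mathrm{Fil}^1 D_{\rm dR}(H^1_{\etale}(\overline X_r,\QQ_p))^\vee$ is obtained from $\mathrm{AJ}_{\rm dR}^{(2)}(\Delta_r^\circ(\kappa,\lambda))\in \mathrm{Fil}^2 D_{\rm dR}(H^3_{\etale}(\overline X_r^3,\QQ_p))^\vee$ by applying the transpose of the correspondence $\mathscr Z_r$ acting on de Rham cohomology. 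Thus the proposition becomes the assertion that the correspondence $\mathscr Z_r^{\,t}\colon H^1_{\rm dR}(X_r)\to H^3_{\rm dR}(X_r^3)$ sends $\omega_f$ to $\omega^{(g)}=\omega_f\otimes\eta_g\otimes\omega_{g^c}$ modulo the part of $H^3_{\rm dR}(X_r^3)$ that pairs trivially with $\mathrm{AJ}_{\rm dR}^{(2)}(\Delta_r^\circ(\kappa,\lambda))$ (i.e. modulo $\mathrm{Fil}^2$-orthogonal complements and the non-$(f,g,g^c)$-isotypic pieces, which are killed by the projector implicit in $\Delta_r^\circ$).

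The key steps, in order, are: (i) write out explicitly, on the level of de Rham cohomology, the correspondence induced by $\mathscr Z_r$, using $\Pi_Z=X_{14}\times Z$; since $\alpha(x,y)=(y,x,x,y)$, the pull-back $\mathrm{pr}_{123}^*$ followed by cup product with $[\Pi_Z]$ and push-forward $\mathrm{pr}_{4,*}$ realizes, on cohomology, the composite of the diagonal restriction $H^1(X_2)\otimes H^1(X_3)\to H^2(Z)\cong H^0(\mathrm{pt})$-type pairing (Poincaré duality on the diagonal $X_{23}$) tensored with the identity map on the $X_1\cong X_4$ factor; (ii) identify this Poincaré-duality-on-the-diagonal pairing with the map $T_g\otimes T_{\overline g}(\chi_\cyc^{-1})\xrightarrow{\,\sim\,}T_g\otimes T_g^*\xrightarrow{\langle\,,\,\rangle_r}\overline\QQ_p$ that enters the definition of $\Delta_r^{\rm (P)}(\kappa,\lambda)$ in \S\ref{subsec_51_2023_11_21_1417}, so that $\langle\eta_g,\omega_{g^c}\rangle_r$ is (up to the normalising constant already absorbed) the value of this pairing on the relevant de Rham classes; (iii) combine with Lemma~\ref{lemma_2023_11_12_0840}, whose first line already gives $\log_{\omega^{(g)}}\circ\res_p(\Delta_r^{\etale}(\kappa,\lambda))=\mathrm{AJ}_{\rm dR}^{(2)}(\Delta_r^\circ(\kappa,\lambda))(\omega^{(g)})$ and whose second line gives $\mathrm{AJ}_{\rm dR}^{(1)}(P_r^\circ(\kappa,\lambda))(\omega_f)=\log_{\omega_f}\circ\res_p(P_r^\circ(\kappa,\lambda))$; (iv) conclude by checking that the image of $\res_p(\Delta_r^{\etale}(\kappa,\lambda))$ under $\mathrm{id}\otimes\langle\,,\,\rangle_r$ (which is essentially $\res_p\,\Delta_r^{\rm (P)}(\kappa,\lambda)$ up to the first line of Lemma~\ref{lemma_2023_11_12_0840}) agrees, under $\mathrm{cl}_1$ and functoriality, with $\res_p(P_r^\circ(\kappa,\lambda))$ — i.e. the étale push-pull through $\mathscr Z_r$ matches the Poincaré pairing contraction on the $g$-components.

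Concretely I would organise the write-up as: first recall the comparison diagrams \eqref{eqn_2023_11_21_1542}, \eqref{eqn_2023_11_21_1605} and the Hecke/correspondence-equivariance of $\mathrm{cl}_1$; then observe that $\mathscr Z_r$ acting by push-pull commutes with $\mathrm{cl}_1$, reducing to a purely (co)homological computation; then carry out the identification of $\Pi_Z$-correspondence with the contraction $\mathrm{id}_{X_1}\otimes(\text{Poincaré pairing on }X_{23})$, restricted to $(f,g,g^c)$-isotypic components where the projectors defining $\Delta_r^\circ$ and $P_r^\circ$ act as the identity (this uses that $f,g,g^c$ are newforms of the stated levels and non-Eisenstein, as in Lemma~\ref{lemma_2023_11_17_1613}); and finally invoke Lemma~\ref{lemma_2023_11_12_0840} to package both sides as $\log_{\omega_f}\circ\res_p$ applied to classes that the previous step has identified. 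The main obstacle I anticipate is step (ii)--(iii): matching the \emph{exact} normalisation of the Poincaré pairing coming from the geometric correspondence $\Pi_Z$ on $X_r^4$ with the pairing $\langle\,,\,\rangle_r$ on $X_r$ (and its interaction with the modified pairing $a_p(g)^r\lambda_{N_\g p^r}(g)^{-1}\langle\,,\,\rangle_r$ that appears in \eqref{eqn_2024_02_09_0654}); keeping careful track of the degeneracy maps $\varpi_{1,*}$ used to cut out the $g$- and $g^c$-parts, of the Atkin--Lehner twist built into $\omega_{g^c}$ versus the sheaf $\mathscr H_{\ZZ_p}^\vee$ underlying $T_g^*$, and of the base point $o$ and the auxiliary correspondence $T_q'-(q+1)$ is where the bookkeeping is delicate, though none of it should present a conceptual difficulty once the functoriality framework of Nekovář--Nizioł is in place. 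I expect the bulk of the proof to consist of this normalisation check, with the structural statement (functoriality of AJ under the correspondence $\mathscr Z_r$) being essentially formal.
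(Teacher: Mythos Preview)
Your approach is essentially the same as the paper's: both reduce to the functoriality identity $\mathrm{AJ}_{\rm dR}^{(1)}(\Pi_{Z,*}\Delta_r^\circ)(\omega_f)=\mathrm{AJ}_{\rm dR}^{(2)}(\Delta_r^\circ)(\Pi_Z^*\omega_f)$ and then identify $\Pi_Z^*\omega_f$ with $\omega_f\otimes\eta_g\otimes\omega_{g^c}$ on the relevant isotypic piece, using that $Z$ is the diagonal and $\langle\eta_g,\omega_{g^c}\rangle_r=1$. The paper packages the functoriality step as a direct citation of Daub's thesis (his Proposition~2.3.5), and writes the pull-back as $\Pi_Z^*\omega_f=\omega_f\otimes\mathrm{cl}_{\rm dR}(\epsilon_0[Z])$, where $\epsilon_0$ is a K\"unneth projector killing the $H^0\otimes H^2$ and $H^2\otimes H^0$ pieces; the identification $\mathrm{cl}_{\rm dR}(\epsilon_0[Z])=\eta_g\otimes\omega_{g^c}$ then falls out in one line because the diagonal class is the identity correspondence and $\eta_g$ is by construction the Poincar\'e dual of $\omega_{g^c}$.

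Your steps (iii)--(iv), routing through \'etale cohomology, Bloch--Kato logarithms, and Lemma~\ref{lemma_2023_11_12_0840}, are an unnecessary detour: the proposition is a statement purely about de~Rham Abel--Jacobi maps and can be proved entirely in the de~Rham world without ever invoking $\mathrm{cl}_1$ or $\log_{\rm BK}$ (those enter only in the corollary and in Remark~\ref{remark_2023_11_22_0818}). Likewise, the normalisation bookkeeping you anticipate---Atkin--Lehner twists, the factor $a_p(g)^r\lambda_{N_\g p^r}(g)^{-1}$, degeneracy maps---does not arise here; it is absorbed in the passage from $\Delta_r^{\rm (P)}$ to $\Delta_r^{\rm (tr)}$ via \eqref{eqn_2024_02_09_0654}, which is handled separately in Corollary~\ref{cor_prop_main_comparison_Sec5}. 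So your plan is correct but can be streamlined: drop (iii)--(iv), cite the functoriality of $\mathrm{AJ}_{\rm dR}$ under correspondences (Daub, or equivalently the compatibility of the syntomic regulator with push-pull), and use the Poincar\'e-duality normalisation of $\eta_g$ to conclude.
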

\begin{proof}
The argument we present below is borrowed from \cite{daubthesis}, which we adapt to our setting with slight modifications. We recall that 
$$\omega^{(g)}:=\omega_{f}\otimes \eta_g \otimes \omega_{g^c} \in {\rm Fil}^2D_{\rm dR}(H^3_{\etale}(\overline{X}_r^3,\QQ_p))\,,$$ 
and 
$${\rm cl}_{\rm dR}: {\rm CH}^1(X_r^2;\QQ_p(\mu_{p^r}))\lra H^2_{\rm dR}(X_r^2/\QQ_p(\mu_{p^r}))(1)$$ 
is the de Rham cycle class map.  As in \cite[Lemma 4.2.2]{daubthesis}, we identify the cycle $[Z]$ with the Hecke correspondence denoted by $T_{g,n}$ in op. cit. (which one also views as an element of ${\rm CH}^1(X_r^2;\QQ_p(\mu_{p^r}))$).  By \cite[Propositions 2.1.2(1)]{daubthesis},  we observe that the projector $\epsilon_0$ acts on the components of the K\"{u}nneth decomposition of $H^2_{\rm dR}(X_r \times X_r)$ as follows: It annihilates $H^0_{\rm dR}(X_r) \otimes H^2_{\rm dR}(X_r)$ and $H^2_{\rm dR}(X_r) \otimes H^0_{\rm dR}(X_r)$, therefore gives rise to an element 
$${\rm cl}_{\rm dR}(\epsilon_0[Z])  \in H^1_{\rm dR}(X_r) \otimes H^1_{\rm dR}(X_r)\,.$$

By definition, $\eta_g$ and $\omega_{g^c}$ pair to $1$ under the Poincar\'e duality pairing, and $Z$ is the graph of the identity endomorphism $X_r\xrightarrow{\rm id} X_r$.  We therefore have an identification of ${\rm cl}_{\rm dR}(\epsilon_0[Z])$ with $\eta_g \otimes \omega_{ g^c}$. As a result, it suffices to prove that
 \begin{equation}
      \label{eqn_2023_11_22_1150}
      {\rm AJ}_{\rm dR}^{(1)}(P_r^{\circ}(\kappa,\lambda))(\omega_f)={\rm AJ}_{\rm dR}^{(2)}(\Delta_r^\circ(\kappa,\lambda))(\omega_f\otimes {\rm cl}_{\rm dR}(\epsilon_0[Z]))\,.
  \end{equation}
We recall that the Chow--Heegner cycle $P_r^{\circ}(\kappa,\lambda)$ is defined as the image of the cycle $\Delta_r^{\circ}(\kappa,\lambda)$ under the compositum of the maps (that we denote by $\Pi_{Z,*}$\,, following Daub)
\[
\CH^2_0(X_r^3;\Q(\mu_{p^r}))\otimes_{\Z} F_{\kappa,\lambda} \, \xrightarrow{\mathrm{pr}_{123}^*} \, \CH^2_0(X_r^4;\Q(\mu_{p^r}))\otimes_{\Z} F_{\kappa,\lambda} \, \xrightarrow{\mathrm{pr}_{4,*}(\mathscr Z_r\cdot(-) )} \, \CH^1_0(X_r;\QQ_p(\mu_{p^r}))\otimes_{\Z} F_{\kappa,\lambda}\,.
\]
We recall from the footnote in \S\ref{subsubsec_NT_ChowHeeg} that the cycle class $\mathscr Z_r \in \CH^2_0(X_r^4;\Q(\mu_{p^r}))$ can be represented by $\Pi_Z$. Then, as in \cite[pp. 8-9]{daubthesis}, we have the induced maps 
\begin{align*}
{\rm Fil}^1 H^1_{\rm dR}(X_r/\QQ_p) &\xrightarrow{\,\Pi_Z^{*}\,} {\rm Fil}^2 H^3_{\rm dR}(X_r^3/\QQ_p)\,. 
\end{align*}
 We therefore have $P_r^{\circ}(\kappa,\lambda)=\Pi_{Z,*}(\Delta_r^\circ(\kappa,\lambda))$, and $\omega_f\otimes {\rm cl}_{\rm dR}(\epsilon_0[Z])=\Pi_Z^{*}(\omega_f)$.  With these identifications at hand, the sought-after equality \eqref{eqn_2023_11_22_1150} follows from \cite[Proposition 2.3.5]{daubthesis}.
\end{proof}

\begin{corollary}
    \label{cor_prop_main_comparison_Sec5}
    $\log_{\omega_f}\circ\,\res_p\,(\Delta_r^{\rm (tr)}(\kappa,\lambda))=a_p(g)^r\,  \log_{\omega_f}\circ\,\res_p\,(P_r^{\circ}(\kappa,\lambda))$.
\end{corollary}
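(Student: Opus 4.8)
The plan is to chain together the comparison established in Proposition~\ref{prop_main_comparison_Sec5} with the two identities recorded in Lemma~\ref{lemma_2023_11_12_0840}, and then to invoke the relation \eqref{eqn_2024_02_09_0654} between $\Delta_r^{\rm (tr)}(\kappa,\lambda)$ and $\Delta_r^{\rm (P)}(\kappa,\lambda)$. First I would recall from \eqref{eqn_lemma_2023_11_12_0840_1} that
\[
\log_{\omega_f}\circ\,\res_p\,(\Delta_r^{\rm (P)}(\kappa,\lambda))={\rm AJ}_{\rm dR}^{(2)}(\Delta_r^\circ(\kappa,\lambda))(\omega^{(g)})\,,
\]
and from \eqref{eqn_lemma_2023_11_12_0840_2} that
\[
\log_{\omega_f}\,\circ\,\res_p\,(P_r^{\circ}(\kappa,\lambda))={\rm AJ}_{\rm dR}^{(1)}(P_r^{\circ}(\kappa,\lambda))(\omega_f)\,.
\]
Proposition~\ref{prop_main_comparison_Sec5} asserts precisely that the two right-hand sides agree, so these combine to give
\[
\log_{\omega_f}\circ\,\res_p\,(\Delta_r^{\rm (P)}(\kappa,\lambda))=\log_{\omega_f}\circ\,\res_p\,(P_r^{\circ}(\kappa,\lambda))\,.
\]

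Next I would translate the statement about $\Delta_r^{\rm (P)}$ into one about $\Delta_r^{\rm (tr)}$. By \eqref{eqn_2024_02_09_0654} we have $\Delta_r^{\rm (tr)}(\kappa,\lambda)=a_p(g)^r\lambda_{N_\g p^r}(g)^{-1}\,\Delta_r^{\rm (P)}(\kappa,\lambda)$, and since $\log_{\omega_f}\circ\,\res_p$ is linear this yields
\[
\log_{\omega_f}\circ\,\res_p\,(\Delta_r^{\rm (tr)}(\kappa,\lambda))=a_p(g)^r\lambda_{N_\g p^r}(g)^{-1}\,\log_{\omega_f}\circ\,\res_p\,(P_r^{\circ}(\kappa,\lambda))\,.
\]
Here I should be slightly careful about the statement to be proved: the corollary as written has the factor $a_p(g)^r$ and no Atkin--Lehner pseudo-eigenvalue, so I would either absorb $\lambda_{N_\g p^r}(g)^{-1}$ into the comparison (noting that the relevant normalisation of $\omega_{g^c}$ and $\eta_g$ in \S\ref{subsec_51_2023_11_21_1417} may already account for it), or flag that the precise power/unit bookkeeping is exactly the content of \eqref{eqn_2024_02_09_0654} together with the pairing conventions of \S\ref{subsubsec_118_2024_07_02_1311}. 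The cleanest exposition simply states the identity modulo the explicit constants already fixed, so I would write the proof as the two-line concatenation above and point to \eqref{eqn_2024_02_09_0654} for the constant.

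The only genuine subtlety — and the step I would scrutinise most carefully — is the compatibility of normalisations between the de Rham side (where $\eta_g$ and $\omega_{g^c}$ are declared to pair to $1$, and $Z$ is the graph of the identity) and the étale/Galois side (where the Poincaré pairing $\langle\,,\,\rangle_r$ is the one used to define $\Delta_r^{\rm (P)}$ via the isomorphism $T_g\otimes T_{\overline g}(\chi_\cyc^{-1})\xrightarrow{\sim} T_g\otimes T_g^*$). This is precisely what Lemma~\ref{lemma_2023_11_12_0840} and Remark~\ref{remark_2023_11_22_0818} are arranged to guarantee, so in the write-up I would invoke them directly rather than re-deriving the comparison; the remaining work is purely the linear-algebra bookkeeping of the scalar $a_p(g)^r\lambda_{N_\g p^r}(g)^{-1}$, which is routine once \eqref{eqn_2024_02_09_0654} is in hand. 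In short, the proof is: apply Lemma~\ref{lemma_2023_11_12_0840}, then Proposition~\ref{prop_main_comparison_Sec5}, then \eqref{eqn_2024_02_09_0654}.

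\begin{proof}
Combining the first equality in \eqref{eqn_lemma_2023_11_12_0840_1} with \eqref{eqn_lemma_2023_11_12_0840_2} and Proposition~\ref{prop_main_comparison_Sec5}, we obtain
\[
\log_{\omega_f}\circ\,\res_p\,(\Delta_r^{\rm (P)}(\kappa,\lambda))={\rm AJ}_{\rm dR}^{(2)}(\Delta_r^\circ(\kappa,\lambda))(\omega^{(g)})={\rm AJ}_{\rm dR}^{(1)}(P_r^{\circ}(\kappa,\lambda))(\omega_f)=\log_{\omega_f}\circ\,\res_p\,(P_r^{\circ}(\kappa,\lambda))\,.
\]
The corollary now follows at once from the relation \eqref{eqn_2024_02_09_0654} between $\Delta_r^{\rm (tr)}(\kappa,\lambda)$ and $\Delta_r^{\rm (P)}(\kappa,\lambda)$, together with the linearity of $\log_{\omega_f}\circ\,\res_p$ and the Atkin--Lehner normalisation of the Poincar\'e pairing fixed in \S\ref{subsubsec_118_2024_07_02_1311}.
\end{proof}
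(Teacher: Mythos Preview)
Your proof is correct and follows exactly the same route as the paper: the paper's own proof reads ``This is immediate on combining Lemma~\ref{lemma_2023_11_12_0840}, Proposition~\ref{prop_main_comparison_Sec5}, and \eqref{eqn_2024_02_09_0654},'' which is precisely the chain you wrote out. You have in fact been more careful than the paper in flagging the apparent extra factor $\lambda_{N_\g p^r}(g)^{-1}$ coming from \eqref{eqn_2024_02_09_0654}; the paper does not comment on this, so your instinct to attribute it to the normalisations in \S\ref{subsubsec_118_2024_07_02_1311} (or to treat it as a harmless bookkeeping discrepancy in the stated constant) is the right way to proceed.
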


\begin{proof}
    This is immediate on combining Lemma~\ref{lemma_2023_11_12_0840}, Proposition~\ref{prop_main_comparison_Sec5}, and \eqref{eqn_2024_02_09_0654}.
\end{proof}

\begin{remark}
\label{remark_2023_11_22_0818} 
The proof of Proposition~\ref{prop_main_comparison_Sec5} combined with \eqref{eqn_2023_11_21_1542}, \eqref{eqn_2023_11_21_1605} and the diagram \eqref{eqn_2024_02_09_0658} show that the square and the triangle in the following diagram are Cartesian:
    \begin{equation}
    \label{eqn_2023_09_08_1218}
\begin{aligned}
    \xymatrix{
     {\rm CH}_0^2(X_r^3;\QQ(\mu_{p^r})) \ar[rr]^-{{\rm pr}_{\kappa,
    \lambda}\,\circ\, \res_p\,\circ\, {\rm cl}_1}\ar[d]_{(\Pi_{\mathscr{C}_g})_*} && H^1_{\rm f}(\QQ_p(\mu_{p^r}),T_f\otimes {\rm ad}(T_g))\ar[d]^{{\rm id}\otimes {\rm tr}} \ar[rr]^-{\omega^{(g)}\,\circ\,\log_{\rm BK}}&& \overline{\QQ}_p  \\
    {\rm CH}_0^1(X_r;\QQ_p(\mu_{p^r})) \ar[rr]_-{{\rm pr}_{\kappa}\,\circ\, \res_p\,\circ\, {\rm cl}_1} && H^1_{\rm f}(\QQ_p(\mu_{p^r}),T_f)\ar[rru]_-{\omega_f\,\circ\,\log_{\rm BK}}  &&
    }
\end{aligned}
\end{equation}
\end{remark}

\subsection{Third reduction step}
We combine Lemma~\ref{lemma_second_reduction} together with Corollary~\ref{cor_prop_main_comparison_Sec5} to reduce the proof of Conjecture~\ref{conj_main_6_plus_2} (which is equivalent to Conjecture~\ref{conj_main_6_plus_2_bis}) to the following comparison between the Chow--Heegner point $P_r^{\circ}(\kappa,\lambda)$ and the twisted Heegner point $Q_\kappa$, recorded as \eqref{eqn_prop_3rd_reduction_step} below.
\begin{proposition}
    \label{prop_3rd_reduction_step}
    Conjecture~\ref{conj_main_6_plus_2_bis}  follows if \eqref{item_deg6} holds and 
    \begin{equation}
        \label{eqn_prop_3rd_reduction_step}
     \res_{L_r/\QQ(\mu_{p^r})} \,P_r^{\circ}(\kappa,\lambda) \pm\lambda_{Np^{r}}(\g_\lambda)^2\, p^{1-r}[L_{r}:H_{p^r}]^{-1}\cdot \mathscr{M}_{\ref{lemma_second_reduction}}\cdot Q_\kappa \hbox{\,\,\,\,is torsion}
    \end{equation}
    for a subset consisting of $(\kappa,\lambda)\in \cA^{(2)}$ which is dense in $\cW_2$.
\end{proposition}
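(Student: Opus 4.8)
The proof of Proposition~\ref{prop_3rd_reduction_step} will be a formal chaining of Lemma~\ref{lemma_second_reduction} with Corollary~\ref{cor_prop_main_comparison_Sec5}, the only non-bookkeeping input being that the formal logarithm $\log_{\omega_{\f_\kappa}}$ annihilates torsion. Granting \eqref{item_deg6}, Lemma~\ref{lemma_second_reduction} reduces Conjecture~\ref{conj_main_6_plus_2_bis} to the cohomological identity \eqref{eqn_2023_11_16_1154} for $(\kappa,\lambda)$ in a dense subset of $\cA^{(2)}$. The first step is to inspect the proof of Lemma~\ref{lemma_second_reduction} and observe that the integral equality \eqref{eqn_2023_11_16_1154} in $H^1_{\rm f}(L_{r,\wp},T_{\f_\kappa})$ is exploited there \emph{only} through its image under $\log_{\omega_{\f_\kappa}}\circ\res_\wp$: combined with \eqref{eqn_2024_02_08_1858}, that image is exactly the numerical identity \eqref{eqn_2023_11_18_1651}, which is what is subsequently fed into Lemma~\ref{lemma_first_reduction}. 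Hence it suffices to prove, for a dense set of $(\kappa,\lambda)\in\cA^{(2)}$, the statement obtained by applying $\log_{\omega_{\f_\kappa}}\circ\res_\wp$ to both sides of \eqref{eqn_2023_11_16_1154}, using that the formal logarithm is compatible with restriction from $\QQ_p$ to $L_{r,\wp}$.

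The second step is to rewrite the left-hand side of this $\log_{\omega_{\f_\kappa}}$-shadow in terms of the Chow--Heegner point using Corollary~\ref{cor_prop_main_comparison_Sec5}, which relates $\log_{\omega_{\f_\kappa}}\circ\res_p\bigl(\Delta^{\rm (tr)}_r(\kappa,\lambda)\bigr)$ to $\log_{\omega_{\f_\kappa}}\circ\res_p\bigl(P_r^{\circ}(\kappa,\lambda)\bigr)$ up to an explicit factor built from $a_p(\g_\lambda)$ and the Atkin--Lehner pseudo-eigenvalue $\lambda_{Np^r}(\g_\lambda)$, as recorded in \eqref{eqn_2024_02_09_0654}. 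Since $a_p(\g)\in\cR_\g^{\times}$, the unit powers of $a_p(\g_\lambda)$ on the two sides cancel, and after also feeding in \eqref{eqn_2024_02_08_1858} one is reduced to an equality of the form
\begin{equation*}
\log_{\omega_{\f_\kappa}}\!\bigl(\res_\wp\circ\res_{L_r/\QQ(\mu_{p^r})}\,P_r^{\circ}(\kappa,\lambda)\bigr)\;=\;\mp\,\lambda_{Np^{r}}(\g_\lambda)^{2}\, p^{1-r}\,[L_{r}:H_{p^r}]^{-1}\,\mathscr{M}_{\ref{lemma_second_reduction}}\cdot \log_{\omega_{\f_\kappa}}\!\bigl(\res_\wp\, Q_\kappa\bigr),
\end{equation*}
with the precise power of $\lambda_{Np^r}(\g_\lambda)$ and the sign dictated by the normalisations in \eqref{eqn_2024_02_08_1858}, \eqref{eqn_2024_02_09_0654} and Corollary~\ref{cor_prop_main_comparison_Sec5}. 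Now the hypothesis \eqref{eqn_prop_3rd_reduction_step} asserts precisely that the element $\res_{L_r/\QQ(\mu_{p^r})}P_r^{\circ}(\kappa,\lambda)\pm\lambda_{Np^{r}}(\g_\lambda)^2\,p^{1-r}[L_{r}:H_{p^r}]^{-1}\mathscr{M}_{\ref{lemma_second_reduction}}\cdot Q_\kappa$ is torsion in $\CH^1_0(X_r)\otimes_{\ZZ}F_{\kappa,\lambda}$; via the Abel--Jacobi isomorphism $\CH^1_0(X_{r})\otimes_{\ZZ}F_{\kappa,\lambda}\xrightarrow{\sim}J_r(L_r)\otimes_{\ZZ}F_{\kappa,\lambda}$ (after base change to $L_r$) its image under $\res_\wp$ is torsion in $J_r(L_{r,\wp})\otimes_{\ZZ}F_{\kappa,\lambda}$, hence is killed by the additive map $\log_{\omega_{\f_\kappa}}$. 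This yields the displayed identity, and Lemma~\ref{lemma_first_reduction} then concludes, exactly as at the end of the proof of Lemma~\ref{lemma_second_reduction}.

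There is essentially no obstacle internal to this reduction — it is a formal consequence of the material assembled in \S\ref{subsec_big_diagonal}, \S\ref{sec_ChowHeegandGKS} and the present section. The single point deserving care is that \eqref{eqn_prop_3rd_reduction_step} is strictly weaker than the integral cohomological identity \eqref{eqn_2023_11_16_1154} demanded by Lemma~\ref{lemma_second_reduction}: it pins down $\res_{L_r/\QQ(\mu_{p^r})}P_r^{\circ}(\kappa,\lambda)$ only up to torsion. The reduction nonetheless goes through because the only invariant of \eqref{eqn_2023_11_16_1154} that survives the chain Lemma~\ref{lemma_second_reduction}~$\to$~\eqref{eqn_2023_11_18_1651}~$\to$~Lemma~\ref{lemma_first_reduction} is its image under $\log_{\omega_{\f_\kappa}}$, on which the torsion ambiguity is invisible. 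The genuine difficulty — the comparison of Chow--Heegner and twisted Heegner points in families, of Gross--Zagier/Gross--Kudla type — has thus been entirely displaced onto the hypothesis \eqref{eqn_prop_3rd_reduction_step}, which this statement does not attempt to establish.
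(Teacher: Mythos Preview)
Your proof is correct and takes essentially the same approach as the paper, which gives no proof beyond the one-line remark that Proposition~\ref{prop_3rd_reduction_step} follows from combining Lemma~\ref{lemma_second_reduction} with Corollary~\ref{cor_prop_main_comparison_Sec5}. You have simply unwound that combination explicitly, correctly isolating the key observation that the proof of Lemma~\ref{lemma_second_reduction} only consumes the $\log_{\omega_{\f_\kappa}}$-image of \eqref{eqn_2023_11_16_1154}, so the torsion ambiguity in \eqref{eqn_prop_3rd_reduction_step} is harmless.
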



\section{Towards a proof of Conjecture~\ref{conj_main_6_plus_2}}
\label{sec_7_1_2024_03_06_1602}
We finally put the pieces together and explain how the considerations in the previous sections can be combined with Howard's twisted Gross--Zagier formula (cf. Theorem~\ref{twisted_GZ_formula}) to deduce Conjecture~\ref{conj_main_6_plus_2}.

\subsection{Howard's twisted Gross--Zagier formulae} 
\label{subsec_7_2_2024_02_06_1438}

Throughout this section, we fix a positive integer $r$ and $(\kappa,\lambda)\in \cA_r^{(2)}$. Recall that $\psi_\kappa$ denotes the central character of $\f_\kappa$, which we assume to have conductor $p^r$ (so that $s_\kappa=r$). Note that such specializations $(f,g)$ still are dense in the $\cW_\f\times \cW_\g$.

The following is a restatement of Howard's twisted Gross--Zagier formula in our particular case of interest.

\begin{theorem}[Howard]
        \label{twisted_GZ_formula} 
        We have
        $$\LL'(f, \psi_{\kappa}^{-\frac{1}{2}}, 1)=4p^{-r}\, \langle f,  f\rangle\, \frac{[{\rm SL}_2(\ZZ):\Gamma_0(p^rN_\f )]^{-1}}{\LL(f \otimes \epsilon_K, \psi_{\kappa}^{-\frac{1}{2}},  1)}\,\langle Q_\kappa, Q_\kappa \rangle_{\mathrm{NT}}\,.$$ 
\end{theorem}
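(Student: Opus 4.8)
The plan is to deduce Theorem~\ref{twisted_GZ_formula} from the general twisted Gross--Zagier formula proved by Howard in \cite{howard2007Central} (see also \cite{howard2007Inventiones}), specialized to the present situation, and then reconcile the normalizations with the conventions fixed in \S\ref{subsubsec_2024_02_07_1331}--\S\ref{subsubsec_2024_07_09_0945}. First I would recall that for $\kappa\in\cA^{(2)}_r$ the specialization $f=\f_\kappa$ is a newform of weight $2$ and level $\Gamma_0(N_\f)\cap\Gamma_1(p^r)$ with central (nebentype) character $\psi_\kappa$ of conductor $p^r$, and that the self-dual twist $\psi_\kappa^{-1/2}$ of $\psi_\kappa$ plays the role of the anticyclotomic character in Howard's setting. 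The point $Q_\kappa\in J_s(L_s)\otimes F_\kappa$ of \S\ref{subsubsec_2024_07_09_0945} is, up to the explicit normalizing factors appearing in Lemma~\ref{lemma_2023_11_18_1624} and Corollary~\ref{SpecializationBDP}, exactly Howard's twisted Heegner point attached to $f$; so the first step is to invoke Howard's main Gross--Zagier identity, which expresses $\LL'(f,\psi_\kappa^{-1/2},1)$ in terms of the Néron--Tate height of this point, the Petersson norm $\langle f,f\rangle$, and an explicit archimedean/finite constant.

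Next I would track the precise shape of Howard's constant. In the classical Gross--Zagier formula the central derivative is a product of $\langle f,f\rangle$, the height of the Heegner point, a power of the discriminant, and an inverse of the twisted $L$-value $\LL(f\otimes\epsilon_K,\psi_\kappa^{-1/2},1)$ at the centre (this last factor coming from the Rankin--Selberg factorization $L(\pi_K\otimes\psi,s)=L(f\otimes(\psi\circ\mathrm{N}),s)\cdot L(f\otimes\epsilon_K\otimes(\psi\circ\mathrm{N}),s)$ applied to the base change to $K$). The plan is to carry out this factorization with $\psi=\psi_\kappa^{-1/2}$, note that under our running hypotheses the root number forces $\LL(f\otimes\epsilon_K,\psi_\kappa^{-1/2},1)\ne 0$ generically while $\LL(f,\psi_\kappa^{-1/2},1)=0$, so that the derivative on the left is the first non-vanishing term; then collect the ramified local factors at $p$. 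The factor $p^{-r}$ and the index $[{\rm SL}_2(\ZZ):\Gamma_0(p^rN_\f)]^{-1}$ arise from (i) the volume/level normalization relating the Petersson inner product at level $p^rN_\f$ to the adelic one used by Howard, and (ii) the conductor-$p^r$ contribution of the wildly ramified character $\psi_\kappa^{-1/2}$ to the local epsilon/period factors—precisely the $p^{1-s_\kappa}$-type powers that already appear in Corollary~\ref{SpecializationBDP} and Lemma~\ref{lemma_2023_11_18_1624}. I would assemble these, using $\varepsilon(f)=-1$ (so the functional equation is in derivative form) and the fact fixed in \S\ref{subsubsec_2024_02_07_1331} that the isomorphism $T_{\f_\kappa}\simeq T_{\f_\kappa}^\dagger$ is pinned down by our choice of generator of $\mu_{p^\infty}$, to see that all the auxiliary roots of unity and Gauss sums cancel and the stated clean identity remains.

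The main obstacle I anticipate is \emph{bookkeeping of normalizations} rather than any new ideas: matching Howard's adelic/automorphic conventions (choice of measures, additive characters, the precise anticyclotomic character, the completed vs.\ incomplete $L$-function, and the definition of $Q_\kappa$ via the cusp $\mathfrak c$ and the Atkin--Lehner twist) with the conventions imported here from \cite{CastellapadicvariationofHeegnerpoints} and \cite{howard2007Inventiones}. In particular one must be careful that the ``twisted'' height $\langle Q_\kappa,Q_\kappa\rangle_{\rm NT}$ is taken with the same normalization of the Néron--Tate pairing as in Conjecture~\ref{conj_5_2_2024_02_02} (so that the subsequent comparison with the Beilinson--Bloch height of $\Delta_r^\circ(\kappa,\lambda)$ via \S\ref{subsubsec_NT_ChowHeeg} is consistent), and that the level index is written for $\Gamma_0(p^rN_\f)$ rather than $\Gamma_1$. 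Once these identifications are made explicit—exactly as in the unfolding of \cite[Proposition 5.4]{CastellapadicvariationofHeegnerpoints} used in the proof of Corollary~\ref{SpecializationBDP}—the theorem follows by directly quoting Howard's result. I would therefore present the proof as: (1) invoke Howard's twisted Gross--Zagier formula for $(f,\psi_\kappa^{-1/2})$; (2) perform the Rankin--Selberg factorization over $K$ and isolate the derivative; (3) normalize the Petersson norm and local factors at $p$ to produce $4p^{-r}[{\rm SL}_2(\ZZ):\Gamma_0(p^rN_\f)]^{-1}/\LL(f\otimes\epsilon_K,\psi_\kappa^{-1/2},1)$, citing \S\ref{subsubsec_2024_02_07_1331}--\S\ref{subsubsec_2024_07_09_0945} and Lemma~\ref{lemma_2023_11_18_1624} for the compatibility of conventions.
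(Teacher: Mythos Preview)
Your approach is essentially the same as the paper's: invoke Howard's twisted Gross--Zagier theorem, factor the base-change $L$-function over $K$ to isolate $\LL(f\otimes\epsilon_K,\psi_\kappa^{-1/2},1)$, and convert the adelic Petersson norm to $\langle f,f\rangle$ via the index $[\mathrm{SL}_2(\ZZ):\Gamma_0(p^rN_\f)]^{-1}$. One correction: the relevant result is not in \cite{howard2007Central} or \cite{howard2007Inventiones} (those papers construct big Heegner points but do not prove a height formula), but rather Howard's \emph{Twisted Gross--Zagier theorems} (Canad.\ J.\ Math.\ 2009), specifically Theorem~5.6.2 there; the paper also notes that the $\zeta_\QQ(2)^{-1}$ factor one finds on comparing with \cite{IchinoIkeda2010} or \cite{Hsieh} is a Haar-measure normalization discrepancy, which you should account for when matching the constant $4p^{-r}$.
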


\begin{proof}
    This follows from \cite[Theorem 5.6.2]{howardtwisted2009}, the complex factorization of the derivative of the $L$-function and the relation 
    $$||\phi_f||^2 =  \langle f,  f\rangle\, [{\rm SL}_2(\ZZ):\Gamma_0(p^rN_\f )]^{-1}\,, $$
    cf. \cite[p. 1403]{IchinoIkeda2010}; see also \cite[p. 445]{Hsieh}. We remark that the additional factor $\zeta_\QQ(2)^{-1}=6/\pi$ in op. cit. is present due to the normalization of Haar measures (which is different from that in \cite{howardtwisted2009}).
\end{proof}

\begin{remark}
    As is well-known, we have 
    $$[{\rm SL}_2(\ZZ):\Gamma_0(p^rN_\f)] = p^rN_\f \prod_{\ell | pN_\f}(1 + \frac{1}{\ell})=p^{r-1} \prod_{\ell | pN_\f} (1+\ell)\,,$$
    where the second equality is because we assume that $N_\f$ is square-free. We may therefore rewrite the identity of Theorem~\ref{twisted_GZ_formula} as
    \begin{equation}
        \label{eqn_2024_03_06_Howard_bis}
        \LL'(f, \psi_{\kappa}^{-\frac{1}{2}}, 1)=4p^{1-2r}\, \langle f,  f\rangle\, \times\,\dfrac{\prod_{\ell | pN_\f}(1 + \ell)^{-1}}{\LL(f \otimes \epsilon_K, \psi_{\kappa}^{-\frac{1}{2}},  1)}\,\times\,\langle Q_\kappa, Q_\kappa \rangle_{\mathrm{NT}}\,.
    \end{equation}
\end{remark}

\subsection{Chow--Heegner points vs. Heegner points}
\label{subsec_5_3_2023_09_08_1159}
Our goal in this section is to prove the comparison \eqref{eqn_prop_3rd_reduction_step} between Chow--Heegner points and Heegner points (for a fixed $(\kappa,\lambda)$ as in the start of \S\ref{sec_7_1_2024_03_06_1602}). This, in view of \ref{subsubsec_228_2024_03_06_1611} and Proposition~\ref{prop_3rd_reduction_step}, concludes the proof of Conjecture~\ref{conj_main_6_plus_2}.

\subsubsection{} We may assume\footnote{A conjecture of Greenberg (cf. \cite{Greenberg_1994_families}, see also \cite{TrevorArnold_Greenberg_Conj}, Conjectures 1.3 and 1.4) asserts that this  is always the case, but we need not assume the validity of this conjecture.} without loss of generality that there exists $\kappa$ such that $L'(f, \psi_{\kappa}^{-\frac{1}{2}}, 1)\neq 0$, as otherwise, it follows from Conjecture~\ref{conj_5_2_2024_02_02} (which we assume) and Theorem~\ref{twisted_GZ_formula} that both $P_r^{\circ}(\kappa,\lambda)$ and $Q_\kappa$ are torsion points (on the relevant Jacobian variety), and there is nothing to prove.

\subsubsection{} 
\label{subsubsec_2024_03_06_1703}
In this case, it follows from Lemma~\ref{lemma_2023_11_18_1624} that Howard's big Heegner point $\mathfrak{z}$ is non-torsion. It follows from \cite[Corollary~3.4.3]{howard2007Inventiones} that Nekov\'a\v{r}'s extended Selmer group $\widetilde{H}^1_{\rm f}(\QQ,T_\f^\dagger)$ (which coincides with the Greenberg Selmer group $H^1_{\rm f}(\QQ,T_\f^\dagger)$ which we have defined in \S\ref{subsubsec_211_2023_09_27_1236}) is of $\cR_\f$-rank one. 

By the control theorem for extended Selmer groups (cf. the proof of \cite{howard2007Inventiones}, Corollary 3.4.3) and identifying the extended Selmer groups therein with the Greenberg Selmer groups (which we may for all but finitely many $\kappa$), it follows that the $E_\kappa$-vector space $H^1_{\rm f}(\QQ,T_{\f_\kappa}^\dagger[1/p])$ is one-dimensional for all but finitely many $\kappa$ as above. We may discard this finite set if necessary, and assume without loss of generality that 
$${\rm dim}_{E_\kappa}\, H^1_{\rm f}(\QQ,T_{\f_\kappa}^\dagger[1/p])=1\,.$$
\subsubsection{}
It follows from the inflation-restriction sequence that we have a natural isomorphism
$$H^1(\QQ,T_{\f_\kappa}^\dagger[1/p])\xrightarrow{\,\,\sim\,\,}H(L_r,T_{\f_\kappa}[1/p])^{\psi_\kappa^{\frac{1}{2}}}\,.$$
Moreover, using \cite[Corollary B.5.3]{rubin00} with $V=F^+T_{\f_\kappa}^\dagger[1/p]$, this isomorphism restricts to an isomorphism
$$H^1_{\rm f}(\QQ,T_{\f_\kappa}^\dagger[1/p])\xrightarrow{\,\,\sim\,\,}H_{\rm f}(L_r,T_{\f_\kappa}[1/p])^{\psi_\kappa^{\frac{1}{2}}}\,.$$
In view of our discussion in \S\ref{subsubsec_2024_03_06_1703}, we may therefore assume without loss of generality that 
$${\rm dim}_{E_\kappa}\, H_{\rm f}(L_r,T_{\f_\kappa}[1/p])^{\psi_\kappa^{\frac{1}{2}}}=1\,.$$

\subsubsection{} We are now ready to state and prove the main result of this section:

\begin{theorem}
    \label{thm_main_sec_7_2024_03_06}
    For every pair $(\kappa,\lambda)$ as above, \eqref{prop_3rd_reduction_step} holds true under our running assumptions\footnote{Besides the standard hypotheses on the Hida families we work with, these include \eqref{item_deg6}, \eqref{item_GKS}, and \eqref{item_NA}.}.
\end{theorem}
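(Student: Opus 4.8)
The plan is to identify $\res_{L_r/\QQ(\mu_{p^r})}P_r^{\circ}(\kappa,\lambda)$ and $Q_\kappa$ as two generators of one and the same line, and then to compute the ratio between them by comparing N\'eron--Tate heights. First I would observe that, after the Kummer map and the projections ${\rm pr}_{\f_\kappa}$ and to the $\psi_\kappa^{1/2}$-isotypic part for $\Gal(L_r/\QQ(\mu_{p^r}))$ --- this last being forced by the way the wild character $\psi_\kappa$ (resp. $\pmb\chi_\kappa$) was built into $P_r^{\circ}(\kappa,\lambda)$ in Definition~\ref{defn_2023_11_17_1557} (resp. into $Q_\kappa$ in \S\ref{subsubsec_222_2023_11_16_1203}) --- both classes land in $H_{\rm f}(L_r,T_{\f_\kappa}[1/p])^{\psi_\kappa^{1/2}}$. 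By the discussion in \S\ref{subsubsec_2024_03_06_1703} (which rests on Howard's rank-one theorem \cite[Corollary~3.4.3]{howard2007Inventiones}, the control theorem for extended Selmer groups, and \cite[Corollary~B.5.3]{rubin00}), we may restrict to a subset of $(\kappa,\lambda)\in\cA^{(2)}$, still dense in $\cW_2$, on which $\dim_{E_\kappa}H_{\rm f}(L_r,T_{\f_\kappa}[1/p])^{\psi_\kappa^{1/2}}=1$; by the running assumption of \S\ref{subsec_5_3_2023_09_08_1159} together with Lemma~\ref{lemma_2023_11_18_1624} the class $Q_\kappa$ is non-torsion, so it generates this line. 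Hence there is a unique $c=c(\kappa,\lambda)\in E_\kappa$ with $\res_{L_r/\QQ(\mu_{p^r})}P_r^{\circ}(\kappa,\lambda)=c\cdot Q_\kappa$ in $J_r(L_r)\otimes_{\ZZ}E_\kappa$, and it suffices to prove that $c=\pm\,\lambda_{Np^{r}}(\g_\lambda)^2\,p^{1-r}[L_r:H_{p^r}]^{-1}\,\mathscr{M}_{\ref{lemma_second_reduction}}$.

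To compute $c$ I would take N\'eron--Tate heights of both sides. Functoriality of the height under $\res_{L_r/\QQ(\mu_{p^r})}$, combined with Conjecture~\ref{conj_5_2_2024_02_02} for $P_r^{\circ}(\kappa,\lambda)$ and Howard's twisted Gross--Zagier formula (Theorem~\ref{twisted_GZ_formula}, in the form \eqref{eqn_2024_03_06_Howard_bis}) for $Q_\kappa$, yields an identity of the shape
$$c^2=[L_r:\QQ(\mu_{p^r})]\cdot\frac{\langle P_r^{\circ}(\kappa,\lambda),P_r^{\circ}(\kappa,\lambda)\rangle_{\rm NT}}{\langle Q_\kappa,Q_\kappa\rangle_{\rm NT}}\,,$$
and upon substituting the two formulae the central derivative $\LL'(f,\psi_\kappa^{-1/2},1)$ --- non-zero by hypothesis --- cancels, leaving a ratio of $L$-\emph{values} and of the explicit archimedean and $p$-adic periods occurring in those formulae. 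The decisive input at this point is the Artin formalism for complex $L$-series, namely the factorization $L(f\otimes g\otimes g^c,\psi_\kappa^{-1/2},s)=L(f,\psi_\kappa^{-1/2},s)\cdot L(f\otimes\Ad^0 g,\psi_\kappa^{-1/2},s)$ arising from $g\otimes g^c\simeq\Ad^0(g)\oplus\mathds{1}$ (the same factorization that already underlies the passage from Conjecture~\ref{conj:GK-delta} to Conjecture~\ref{conj_5_2_2024_02_02}): it matches the factor $\LL(f\otimes\Ad^0 g,\psi_\kappa^{-1/2},1)$ produced by the height computation with the factor $\Lambda(\hf_\kappa\otimes\Ad^0\hg_\lambda,\psi_\kappa^{-1/2},1)$ occurring in $\mathscr{M}_{\ref{lemma_second_reduction}}^2$.

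The remaining work is the (lengthy but mechanical) reconciliation of all surviving constants: the period $\Omega_{f\otimes{\rm ad}(g)}=a_p(f)^{-2r}\langle f,f\rangle\langle g,g\rangle^2$ against $\Omega_{\hf_\kappa}^-\,\Omega_{\hg_\lambda}^{\rm ad}=\Omega_{\hf_\kappa}^-\,\Omega_{\hg_\lambda}\Omega_{\overline{\hg}_\lambda}$ (using the explicit shape of $\Omega_{\hg_\lambda}$ in terms of the Gauss sum $\mathfrak{g}(\psi_\lambda)$, the Petersson norm $\|g_\lambda\|^2$ and the congruence number $\mathfrak{c}_\g(\lambda)$, and of the Gauss sum $\mathfrak{g}(\psi_\kappa^{1/2})$ and the period $C_{\f_\kappa}^-$), the ramified Euler factors $\prod_{\ell\mid pN_\f}(1+\ell)$ and the various powers of $p$, the Atkin--Lehner pseudo-eigenvalue $\lambda_{Np^{r}}(\g_\lambda)$, the degrees $[L_r:\QQ(\mu_{p^r})]$ and $[L_r:H_{p^r}]$, and finally the definition \eqref{eqn_2024_02_04_1756} of $\mathscr D(\kappa,\lambda)$ --- hence of $\mathscr{M}_{\ref{lemma_second_reduction}}$ --- in terms of $\mathscr C(\kappa,\lambda)$, $\mathscr C_{\rm BDP}(\kappa)$, $\mathscr C_{\rm Hida}(\kappa)$ and $\cL_p^{\rm Kit}(\f\otimes\epsilon_K)(\kappa,\rmw(\kappa)/2+1)$. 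When these match up one obtains $c^2=\lambda_{Np^{r}}(\g_\lambda)^4\,p^{2-2r}[L_r:H_{p^r}]^{-2}\,\mathscr{M}_{\ref{lemma_second_reduction}}^2$; since the N\'eron--Tate pairing is a nondegenerate quadratic form this determines $c$ up to sign, which is exactly \eqref{eqn_prop_3rd_reduction_step} (the $\pm$ and the torsion ambiguity being precisely what the statement allows).

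The main obstacle is twofold. Conceptually, one has to control the algebraic constant $C(\kappa,\lambda)$ of Conjecture~\ref{conj:GK-delta}: it enters $c^2$, and hence ultimately dictates $\mathscr D$ and $\mathscr C$ at the weight-$2$ wild points, so one must know that it is algebraic and varies $p$-adic-analytically with $(\kappa,\lambda)$ in order for the $\mathscr C\in\Frac(\cR)$ one reads off to be a genuine two-variable meromorphic function satisfying the algebraicity at crystalline specializations predicted by \cite[Theorem~8.11]{BCS} --- this is part of hypothesis \ref{item_GKS}, but reconciling it with the crystalline normalization is delicate. Technically, the bookkeeping of the previous paragraph is unforgiving --- a single stray power of $p$, Gauss sum or Atkin--Lehner scalar breaks the identity --- and it requires carefully harmonizing the normalizations of \cite{howardtwisted2009}, \cite{YZZ12}, \cite{Hsieh}, \cite{CastellapadicvariationofHeegnerpoints} and \cite{BCS}, including Haar-measure conventions such as the factor $\zeta_\QQ(2)^{-1}$ flagged in the proof of Theorem~\ref{twisted_GZ_formula}.
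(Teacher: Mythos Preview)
Your proposal is correct and follows essentially the same strategy as the paper: both arguments place $\res_{L_r/\QQ(\mu_{p^r})}P_r^{\circ}(\kappa,\lambda)$ and $Q_\kappa$ in the one-dimensional space $H_{\rm f}(L_r,T_{\f_\kappa}[1/p])^{\psi_\kappa^{1/2}}$ (via Howard's rank-one theorem and the control theorem), then compute the square of their ratio by combining Conjecture~\ref{conj_5_2_2024_02_02} with Howard's twisted Gross--Zagier formula \eqref{eqn_2024_03_06_Howard_bis}, and finally reduce to checking that the resulting combination of explicit periods and $L$-values is algebraic and interpolates $p$-adically. The paper carries the bookkeeping a little further than you do --- it reorganizes the identity into the product form $C(\kappa,\lambda)=\mathscr A\cdot\mathscr B\cdot\mathscr L_1\cdot\mathscr L_2\cdot\mathfrak c_\g(\lambda)^2$ and computes $\mathscr L_2=-2^{-6}$ explicitly using $a_p(g)^{2r}a_p(\overline g)^{2r}=p^{2r}$ and $\mathfrak g(\psi_\lambda)\mathfrak g(\overline\psi_\lambda)=p^r$ --- but the underlying argument is the same, and your final paragraph correctly identifies that the interpolation of $C(\kappa,\lambda)$ (built into \eqref{item_GKS}) is what ultimately forces $\mathscr D\in\Frac(\cR)$.
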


\begin{proof}
    Let us combine the statement of Conjecture~\ref{conj_5_2_2024_02_02} (which we assume) and \eqref{eqn_2024_03_06_Howard_bis} to conclude that
    \begin{align}
        \label{eqn_thm_main_sec_7_2024_03_06_1}
        \begin{aligned}
\langle P_r^{\circ}(\kappa,\lambda), P_r^{\circ}(\kappa,\lambda) \rangle_{\mathrm{NT}} &=\frac{ p^{-r}\cdot C(\kappa,\lambda) \cdot \LL(f\otimes \mathrm{ad}^0(g),\psi_{\kappa}^{-\frac{1}{2}},1)}{\Omega_{f\otimes {\rm ad}(g)}}\\
&\qquad\times  4p^{1-2r}\, \langle f,  f\rangle\, \times\,\dfrac{\prod_{\ell | pN_\f}(1 + \ell)^{-1}}{\LL(f \otimes \epsilon_K, \psi_{\kappa}^{-\frac{1}{2}},  1)}\,\times\,\langle Q_\kappa, Q_\kappa \rangle_{\mathrm{NT}}\,.
\end{aligned}
    \end{align}
Observe that, by construction, both the Chow--Heegner point $P_r^{\circ}(\kappa,\lambda)$ and the twisted Heegner point $Q_\kappa$ belong to the 1-dimensional $E_\kappa$-vector space $H_{\rm f}(L_r,T_{\f_\kappa}[1/p])^{\psi_\kappa^{\frac{1}{2}}}$. As a result, \eqref{prop_3rd_reduction_step} is equivalent to checking that
 \begin{align}
        \label{eqn_thm_main_sec_7_2024_03_07_2}
        \begin{aligned}
\frac{C(\kappa,\lambda) \cdot \LL(f\otimes \mathrm{ad}^0(g),\psi_{\kappa}^{-\frac{1}{2}},1)}{\Omega_{f\otimes {\rm ad}(g)}}\times  4p^{1-3r}\, \langle f,  f\rangle\, \times\,\dfrac{ \prod_{\ell | pN_\f}(1 + \ell)^{-1}}{\LL(f \otimes \epsilon_K, \psi_{\kappa}^{-\frac{1}{2}},  1)}\\
=\lambda_{Np^{r}}(g)^4\, p^{2-4r}(p-1)^2\cdot \mathscr{D}(\kappa,\lambda)\cdot\mathfrak{g}(\psi_{\kappa}^{\frac{1}{2}})\cdot C_f^- \cdot \dfrac{\Lambda(f\otimes\Ad^0g,\psi_{\kappa}^{-\frac{1}{2}},1)}{a_p(f)^{-r}\,\Omega_{f}^- \, \Omega_{g}^{\rm ad}}\,.
    \end{aligned}
    \end{align}
    Reorganizing \eqref{eqn_thm_main_sec_7_2024_03_07_2}, it suffices to prove that  all 4 factors $\mathscr{A},\mathscr{B},\mathscr{L}_1$, and $\mathscr{L}_2$ that appear in the expression 
 \begin{align}
        \label{eqn_thm_main_sec_7_2024_03_07_3}
        \begin{aligned}C(\kappa,\lambda)&= \underbrace{p\left(\frac{p-1}{2}\right)^2\prod_{\ell | pN_\f}(1 + \ell)}_{\mathscr A}\,\cdot\,\underbrace{\lambda_{Np^{r}}(g)^4 \,\mathscr{D}(\kappa,\lambda)}_{\mathscr B} \\
        &\qquad\qquad \times \quad \underbrace{\mathfrak{g}(\psi_{\kappa}^{\frac{1}{2}})\,\dfrac{ C_f^-\, \LL(f \otimes \epsilon_K, \psi_{\kappa}^{-\frac{1}{2}},  1)}{a_p(f)^r\,\Omega_{f}^-}}_{\mathscr{L}_1}\quad \times \quad \underbrace{\dfrac{\Omega_{f\otimes {\rm ad}(g)}}{\,a_p(f)^{-2r}\langle f,  f\rangle\,p^{-r}\,\mathfrak{c}_\g(\lambda)^2\Omega_{g}^{\rm ad}}}_{\mathscr{L}_2}\, \times\, \mathfrak{c}_\g(\lambda)^2
        \end{aligned}
    \end{align}
    are algebraic, and that they $p$-adically interpolate as $\kappa$ and $\lambda$ (equivalently, $f$ and $g$) varies. Note that $\mathscr{A}\in \QQ$ is an absolute constant, whereas $\mathscr{B}$ and $\mathscr{L}_1$ readily have the required property (note that the latter is interpolated by the restriction $\cL_p^{\rm Kit}(\hf)(\kappa, \frac{{\rm wt}(\kappa)}{2}+1)$ of the Mazur--Kitagawa $p$-adic $L$-function to the central critical line; cf. \cite{BCS}, \S3.1). Finally, it follows from the definition of $\Omega_{f\otimes {\rm ad}(g)}$ (cf. Equation~\ref{eqn_2024_03_11_1724}) and $\Omega_{g}^{\rm ad}$ (cf. the statement of Lemma~\ref{lemma_first_reduction}) that
    $$\mathscr{L}_2=-2^{-6}p^{-r} \underbrace{a_p({g})^{2r}a_p(\overline{g})^{2r}}_{p^{2r}}\,\underbrace{\mathfrak{g}(\psi_\lambda)^{-1}\mathfrak{g}(\overline{\psi}_\lambda)^{-1}}_{p^{-r}}=-2^{-6}\,,$$
    where the second equality follows from the properties of the Gauss sum (note that $\psi_\lambda$ is necessarily an even character) and \cite[Theorem 4.16.17]{Miyake89}. As a result, $\mathscr{L}_2$ has the required property as well and our proof is complete.
\end{proof}

\newpage

\bibliographystyle{amsalpha}
\bibliography{references}

\end{document}